\numberwithin{equation}{section}
\definecolor{ForestGreen}{rgb}{0.1,0.6,0.05}
\definecolor{EgyptBlue}{rgb}{0.063,0.1,0.6}
\newtheorem{thm}{Theorem}[section]
\newtheorem{lemma}[thm]{Lemma}
\theoremstyle{definition}
\newtheorem{remark}[thm]{Remark}
\newtheorem{conjecture}[thm]{Conjecture}
\title{
	\vspace*{-1cm}
	Existence and multiplicity results for a class of semilinear elliptic equations
	\\ \medskip}
\author[1]{Vladimir Bobkov\thanks{E-mail: \texttt{bobkov@kma.zcu.cz}}}
\author[1]{Pavel Dr\'abek\thanks{E-mail: \texttt{pdrabek@kma.zcu.cz}}}
\affil[1]{{\small Department of Mathematics and NTIS, 
		Faculty of Applied Sciences, 
		University of West Bohemia, 
		Univerzitn\'i 8, 301 00 
		Plze\v{n}, Czech Republic}}
\author[2]{Jes\'us Hern\'andez\thanks{E-mail: \texttt{jesus.hernande@telefonica.net}}}
\affil[2]{{\small Instituto de Matem\'atica Interdisciplinar, 
		Facultad de Matem\'aticas,
		28040 Madrid, Spain
	}}
\date{}
\begin{document}
\maketitle
	
\begin{abstract}
	We study the existence and multiplicity of nonnegative solutions, as well as the behaviour of corresponding parameter-dependent branches, to the equation $-\Delta u = (1-u) u^m - \lambda u^n$ in a bounded domain $\Omega \subset \mathbb{R}^N$ endowed with the zero Dirichlet boundary data, where $0<m \leq 1$ and $n>0$. 
	When $\lambda > 0$, the obtained solutions can be seen as steady states of the corresponding reaction-diffusion equation describing a model of isothermal autocatalytic chemical reaction with termination. 
	In addition to the main new results, we formulate a few relevant conjectures.
	
	\par
	\smallskip
	\noindent {\bf  Keywords}: 
	existence,\
	multiplicity,\
	branches,\
	positive solutions,\
	non-Lipschitz nonlinearities,\
	variational methods,\
	flat solutions,\
	compact support solutions.
	
	\par
	\smallskip
	\noindent {\bf  MSC2010}: 
	35A01,	%Existence problems: global existence, local existence, non-existence
	35A02, 	%Uniqueness problems: global uniqueness, local uniqueness, non-uniqueness
	35B09,	%Positive solutions
	35B30,	%Dependence of solutions on initial and boundary data, parameters
	35B38.	%Critical points
\end{abstract}

\section{Introduction}

Let $\Omega$ be a smooth bounded domain in $\mathbb{R}^N$, $N \geq 1$, with the boundary $\partial \Omega$. 
We consider the semilinear elliptic problem
\begin{equation}\label{eq:D}
\left\{
\begin{aligned}
 -\Delta u &= (1-u) u^m - \lambda u^n &&\text{in}~ \Omega,\\
 u &\geq 0, ~ u \not\equiv 0 &&\text{in}~ \Omega,\\
 u &= 0 &&\text{on}~ \partial\Omega,
\end{aligned}
\right.
\end{equation}
where $\lambda$ is a real parameter and exponents $m, n$ satisfy
\begin{equation}\label{eq:mn}
0 < m \leq 1 
\quad \text{and} \quad 
n>0.
\end{equation}
Notice that the problem \eqref{eq:D} has a non-Lipschitz nonlinearity if $0<m<1$ or $0<n<1$.

The equation in the problem \eqref{eq:D} can be seen as a general dimension, steady state version of the reaction-diffusion equation
\begin{equation}\label{eq:LN}
u_t - u_{xx} = (1-u) u^m - \lambda u^n, \quad u \geq 0,
\end{equation}
which serves as a model of isothermal autocatalytic chemical reaction with termination provided $\lambda > 0$, see \textsc{Leach \& Needham} \cite[Chapters 6-9]{needham}.
The Cauchy and the initial-boundary value problems on $\mathbb{R}$ for the equation \eqref{eq:LN} were intensively studied and various results on the existence, qualitative properties, and asymptotic behaviour of the corresponding solutions were obtained, see, e.g., the works \cite{leachmccabe1,leachmccabe2,leachmccabe3} with references therein, and also \cite{CEF,DHI2,DK,dickstein} for related problems. 
In particular, it was observed that if $0<n<m<1$ and $0 < \lambda < \lambda_c$, where 
$$
\lambda_c = \left(\frac{n+1}{m+1}\right) \left(\frac{m+2}{m+1}\right)^{m-n} \frac{(m-n)^{m-n}}{(m-n+1)^{m-n+1}},
$$
then the equation \eqref{eq:LN} possesses a steady state solution with compact support in $\mathbb{R}$, see  \cite[Section 8.4.4]{needham}.
Thus, this solution satisfies also the problem \eqref{eq:D} in an appropriate bounded interval $\Omega \subset \mathbb{R}$. 

Hereinafter, with a slight abuse of notation, we say that $u$ is a \textit{compact support solution} to \eqref{eq:D} if either $u > 0$ in $\Omega$ and $\frac{\partial u}{\partial \nu} = 0$ on $\partial \Omega$, or the support of $u$ is a compact subset of $\Omega$.	
(In the former case, the solution is sometimes called \textit{flat}.)
That is, compact support solutions to \eqref{eq:D} are, in fact, solutions of the overdetermined problem with both Dirichlet and Neumann zero boundary data. 
Clearly, the presence of compact support solutions is a consequence of the non-Lipschitz nature of the nonlinearity of \eqref{eq:D} which results in the possible violation of the strong maximum principle (see Remark \ref{rem:smp} below) and also in the extinction in finite time phenomenon of the associated parabolic problem. 
For extensive information on compact support solutions we refer the interested reader to the books by \textsc{D\'iaz} \cite{D} and \textsc{Pucci \& Serrin} \cite{puser}.

Motivated by the above mentioned example of the one-dimensional compact support solution to \eqref{eq:D}, it is natural to rise the question about the existence of such solutions in higher dimensions. 
However, the available literature on even simpler problems with non-Lipschitz nonlinearities indicates that the higher-dimensional case is considerably more difficult than its one-dimensional counterpart.
The first natural approach is to search for compact support solutions to \eqref{eq:D} in the class of radially symmetric functions. This indeed can be done, see, e.g., \cite{FLS,GST} and Remarks \ref{rem:caseI:compact} and \ref{rem:caseV:compact} below. 
On the other hand, if one considers small perturbations of \eqref{eq:D} by nonradial weights, then radially symmetric arguments cannot be applied, while compact support solutions are still expected to exist. 
Thus, different approaches as, e.g., in \cite{DHI1,DHI2,DHI3} have to be employed. The existence theory for compact support solutions developed in these works heavily relies on variational considerations and properties of the solution set in the \textit{bounded} domain case.

In the present research, we study the existence and multiplicity questions for the problem \eqref{eq:D}, which not only provides the first step towards results on the existence of compact support solutions regardless the radial symmetry assumption, but is also interesting by itself since the obtained results appear to be considerably different from the entire space case $\Omega = \mathbb{R}^N$. 
In order to systematize possible choices of $m,n$ satisfying \eqref{eq:mn}, let us first fix $m \in (0,1)$ and vary $n>0$. 
We distinguish among the following four cases, see Figure \ref{fig:cases1}:
\begin{enumerate}[label={\rm \Roman*.~}, ref={\rm \Roman*}, itemindent=0.4cm]
	\item\label{I}   $0<n < m<1$,
	\item\label{II}  $0<m\leq n\leq 1$,
	\item\label{III} $0<m<1<n< m+1$,
	\item\label{IV}  $0<m<1<m+1\leq n$.
\end{enumerate}

\begin{figure}[ht]
	\centering
	\includegraphics[width=0.45\linewidth]{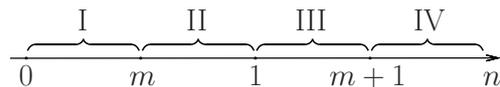}\\
	\caption{Cases I-IV for a fixed $m \in (0,1)$.}
	\label{fig:cases1}
\end{figure}

Fixing now $m=1$ and varying $n>0$, we allocate three additional cases, see Figure \ref{fig:cases2}:
\begin{enumerate}[label={\rm \Roman*.~}, ref={\rm \Roman*}, itemindent=0.4cm]\setcounter{enumi}{4}
	\item\label{VI}  	$0<n\leq m=1$, 
	\item\label{VII} 	$m=1<n \leq m+1=2$,
	\item\label{VIII}	$m+1=2< n$.
\end{enumerate}

\begin{figure}[ht]
	\centering
	\includegraphics[width=0.45\linewidth]{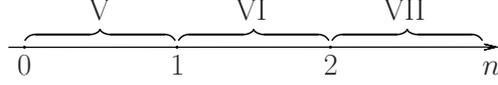}\\
	\caption{Cases V, VI, VII.}
	\label{fig:cases2}
\end{figure}

Moreover, for the sake of generality, we do not restrict the consideration only to the values $\lambda>0$, but also study the complementary case $\lambda \leq 0$.

The article has the following structure.
In Section \ref{section:prelim}, we provide some preliminary and auxiliary results.
Sections \ref{sec:I}-\ref{sec:IV} are devoted to the detailed study of the cases \ref{I}-\ref{IV}, and Sections \ref{sec:VI}-\ref{sec:VIII} cover the cases \ref{VI}-\ref{VIII}. 
Along the text, different arguments are used in order to prove the existence and multiplicity (or uniqueness) of positive solutions to \eqref{eq:D}. 
While the application of classical methods such as global minimization or the method of sub and supersolutions is relatively standard, the application of several variational methods, mostly in order to obtain multiplicity results, is more involved. 
Some of the stated assumptions are probably technical and could be eliminated. 
For each of the cases \ref{I}-\ref{VIII}, we provide the corresponding bifurcation diagram.
We notice, however, that our diagrams are only illustrative and ``minimal''. That is, one should not deduce directly from them the precise number of solutions for a given value of the parameter $\lambda$. 
In some cases, namely, \ref{I} and \ref{VI}, we obtain only nonnegative solutions and it would be interesting to know if they are positive or of compact support type. 
This matter will be pursued elsewhere.

\section{Preliminaries and auxiliary results}\label{section:prelim}
Along this section, we always assume that $m,n>0$, and, in order to keep generality, we \textit{do not} impose the restriction $m \leq 1$ from \eqref{eq:mn} unless otherwise stated explicitly.
Hereinafter, we use the standard notation $L^p(\Omega)$ for the Lebesgue spaces, $1 \leq p \leq \infty$, and denote by $\|\cdot\|_{p}$ the associated norm. 
By $W_0^{1,2}(\Omega)$ we denote the standard Sobolev space endowed with the norm $\|\nabla u\|_{2} := \left(\int_\Omega |\nabla u|^2 \, dx\right)^\frac{1}{2}$, and  $2^*$ stands for the critical Sobolev exponent, i.e., $2^*=\frac{2N}{N-2}$ provided $N>2$, and $2^*=+\infty$ provided $N=1$. With a slight abuse of notation, we set $2^*=+\infty$ also in the case $N=2$. 
Let us denote by $X$ the space $W_0^{1,2}(\Omega) \cap L^{\max\{m+2, n+1\}}(\Omega)$ endowed with the norm $\|u\|_X := \|\nabla u\|_2 + \|u\|_{\max\{m+2, n+1\}}$. 
Notice that $X$ is a reflexive Banach space. Moreover, $X \equiv W_0^{1,2}(\Omega)$ provided $\max\{m+2, n+1\} < 2^*$. 
We will denote by $\lambda_1$ and $\varphi_1$ the first eigenvalue and the corresponding first eigenfunction of the Dirichlet Laplace operator on $\Omega$, respectively. 
We assume, without loss of generality, that $\varphi_1>0$ in $\Omega$ and $\|\varphi_1\|_\infty = 1$.

The problem \eqref{eq:D} has a variational structure and the associated energy functional $E_\lambda \in C^1(X, \mathbb{R})$ is given by
$$
E_\lambda(u) = 
\frac{1}{2} \int_\Omega |\nabla u|^2 \, dx 
-
\frac{1}{m+1} \int_\Omega |u^+|^{m+1} \, dx
+
\frac{1}{m+2} \int_\Omega |u^+|^{m+2} \, dx  
+
\frac{\lambda}{n+1} \int_\Omega |u^+|^{n+1} \, dx,
$$
where $u^+ = \max\{u, 0\} \in X$.
Evidently, the weak lower-semicontinuity of the norms of $X$ and $L^p(\Omega)$ and the Rellich-Kondrachov theorem imply that $E_\lambda$ is weakly lower-semicontinuous on $X$ if $m < 2^*-1$ and, additionally, any of the following assumptions is satisfied:
\begin{alignat*}{2}
&\lambda < 0, \quad&&n < 2^*-1;\\
&\lambda \geq -\frac{n+1}{m+2}, \quad&&n = m+1;\\
&\lambda \geq 0.  \quad&&
\end{alignat*}
Each critical point $u \in X$ of $E_\lambda$ satisfies $u \geq 0$ in $\Omega$ since
$$
0 = \left< E_\lambda'(u), u^- \right> = \int_{\Omega} |\nabla u^-|^2 \,dx,
$$
where $u^- = \max\{-u, 0\} \in X$, which yields $u^- \equiv 0$ in $\Omega$. 
By definition, nonzero critical points of $E_\lambda$ are weak solutions to \eqref{eq:D}. 
Notice that apart from nonnegative solutions the equation in \eqref{eq:D} with zero Dirichlet boundary conditions may also possess sign-changing solutions, although we are not interested in latter ones in the present work.

\begin{lemma}\label{lem:reg}
	Let $m<2^*-1$.
	Assume that either $\lambda < 0$ and $n < 2^*-1$ or $\lambda \geq 0$. 
	Then any weak solution to \eqref{eq:D} is a classical solution. 
\end{lemma}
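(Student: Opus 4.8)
The plan is to run a standard $L^p$--Schauder bootstrap, whose only delicate point is to first establish an $L^\infty$ bound; this is where the sign of the term $-u^{m+1}$ and the hypotheses on $\lambda$ and $n$ enter. Once boundedness is known, the remaining regularity is routine elliptic theory.

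First I would prove that every weak solution $u$ belongs to $L^\infty(\Omega)$, treating the two cases separately. When $\lambda \geq 0$, I observe that on the right-hand side both $-u^{m+1} \leq 0$ and $-\lambda u^n \leq 0$, so that
\[
-\Delta u = u^m - u^{m+1} - \lambda u^n \leq u^m - u^{m+1} \leq C_0 := \max_{s \geq 0}\, s^m(1-s) < \infty
\]
weakly in $\Omega$. Comparing $u$ with the solution $w$ of $-\Delta w = C_0$ in $\Omega$, $w = 0$ on $\partial\Omega$ (which is bounded by elliptic regularity), the weak maximum principle applied to $u - w \in W_0^{1,2}(\Omega)$ yields $u \leq w \leq \|w\|_\infty$; together with $u \geq 0$ this gives $u \in L^\infty(\Omega)$. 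When $\lambda < 0$ and $n < 2^*-1$, the same term $-u^{m+1}$ is discarded to obtain $-\Delta u \leq u^m + |\lambda|\, u^n$, whose right-hand side has strictly subcritical growth since $m, n < 2^*-1$. Starting from $u \in L^{2^*}(\Omega)$ and exploiting $u \geq 0$, I would run a Moser iteration, testing the inequality with powers $u^{2\beta+1}$; the subcriticality guarantees that the iteration closes and produces $u \in L^\infty(\Omega)$.

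Once $u \in L^\infty(\Omega)$ is known, the right-hand side $f(u) = u^m - u^{m+1} - \lambda u^n$ lies in $L^\infty(\Omega)$, so the Calder\'on--Zygmund $W^{2,p}$ estimates give $u \in W^{2,p}(\Omega)$ for every $p < \infty$, and the Sobolev embedding yields $u \in C^{1,\alpha}(\overline{\Omega})$ for every $\alpha \in (0,1)$. Finally, since each power function $s \mapsto s^m,\, s^{m+1},\, s^n$ is H\"older continuous on the bounded interval $[0, \|u\|_\infty]$ (including at the origin, where $u$ may vanish), the composition $f(u)$ is H\"older continuous on $\overline{\Omega}$; the Schauder estimates up to the boundary (using that $\partial\Omega$ is smooth and the boundary datum is zero) then upgrade $u$ to $C^{2,\beta}(\overline{\Omega})$, so that $u$ is a classical solution.

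The main obstacle is the first step. The difficulty is that the exponent $m+1$ may reach or exceed the critical value $2^*-1$ (indeed $m < 2^*-1$ only forces $m+1 < 2^*$), so a naive $L^p$ bootstrap on the full nonlinearity would stall at this term. The resolution is structural rather than computational: the term $-u^{m+1}$ carries a favourable sign and acts as an absorption term, so it can simply be dropped in the upper estimate and never enters the iteration. Under $\lambda \geq 0$ this even reduces the bound to the trivial $-\Delta u \leq C_0$, while under $\lambda < 0$ the hypothesis $n < 2^*-1$ ensures that the only surviving positive powers are subcritical.
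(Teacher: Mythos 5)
Your proposal is correct and follows essentially the same route as the paper's proof: exploit the favourable sign of the absorption terms by dropping them, use the subcriticality of the remaining powers ($m<2^*-1$, and $n<2^*-1$ when $\lambda<0$) to obtain the $L^\infty$ bound, and then run the standard $W^{2,p}$--Schauder bootstrap to reach classical regularity. The only variations are minor refinements: for $\lambda\geq 0$ you bypass any iteration by observing that the whole right-hand side is bounded above by $\max_{s\geq 0}s^m(1-s)$ and comparing with the solution of $-\Delta w = C_0$, and your final conclusion $u\in C^{2,\beta}(\overline{\Omega})$ is in fact slightly stronger than the interior regularity $C^{2,\beta}(\Omega)$ stated in the paper.
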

\begin{proof}
	Let $u \in X$ be a weak solution to \eqref{eq:D}. 
	By the assumptions, the term $\frac{1}{m+1} \int_\Omega |u^+|^{m+1} \, dx$ presented in $E_\lambda(u)$ is subcritical, and the same is true for $\frac{\lambda}{n+1} \int_\Omega |u^+|^{n+1} \, dx$ if $\lambda < 0$.
	On the other hand, the term $\frac{1}{m+2} \int_\Omega |u^+|^{m+2} \, dx$ is with a positive sign, and the same applies to $\frac{\lambda}{n+1} \int_\Omega |u^+|^{n+1} \, dx$ if $\lambda \geq 0$. 
	Thus, carrying out the standard bootstrap argument (see, e.g., \cite[Lemma 3.2]{DKN}), we appeal to the Sobolev embedding theorem with regard to two former terms and estimate from below two latter terms by zero, and hence derive that $u \in L^\infty(\Omega)$. 
	Therefore, by the $L^p$-regularity results in \cite{ADN} or \cite{giltrud}, $u \in W^{2,p}(\Omega)$ for any $1 < p < \infty$ and then in $C^{1,\alpha}(\overline{\Omega})$ for any $\alpha \in (0,1)$.
	Finally, applying again the regularity theory from \cite{giltrud}, we deduce that $u \in C^{2,\beta}(\Omega)$ for some $\beta \in (0,1)$.
\end{proof}

For any $v \in X \setminus \{0\}$, we define fibers $\phi_v(t)$, $t>0$, associated with $E_\lambda(v)$ as 
$$
\phi_v(t) = \frac{t^2}{2} \int_\Omega |\nabla v|^2 \, dx
-
\frac{t^{m+1}}{m+1} \int_\Omega |v^+|^{m+1} \, dx 
+
\frac{t^{m+2}}{m+2} \int_\Omega |v^+|^{m+2} \, dx
+
\frac{\lambda t^{n+1}}{n+1} \int_\Omega |v^+|^{n+1} \, dx.
$$

We say that $\overline{u} \in C^2(\Omega) \cap C(\overline{\Omega})$ is a supersolution to \eqref{eq:D} whenever $\overline{u} \geq 0$ in $\Omega$ and 
$$
\left\{
\begin{aligned}
-\Delta \overline{u} &\geq (1 - \overline{u}) \overline{u}^m - \lambda \overline{u}^n &&\text{in}~ \Omega,\\
\overline{u} &\geq 0 &&\text{on}~ \partial\Omega.
\end{aligned}
\right.
$$
Similarly, we say that $\underline{u} \in C^2(\Omega) \cap C(\overline{\Omega})$ is a subsolution to \eqref{eq:D} whenever $\underline{u} \geq 0$ in $\Omega$ and 
$$
\left\{
\begin{aligned}
-\Delta \underline{u} &\leq (1 - \underline{u}) \underline{u}^m - \lambda \underline{u}^n &&\text{in}~ \Omega,\\
\underline{u} &= 0 &&\text{on}~ \partial\Omega.
\end{aligned}
\right.
$$

Let us provide several auxiliary results which will be useful in what follows. 
The first lemma gives a sufficient condition for the nonexistence of compact support solutions to \eqref{eq:D}, and the second one provides a uniform $L^\infty$-bound for solutions to \eqref{eq:D}.
\begin{lemma}\label{lem:smp}
	Assume that one of the following assumptions is satisfied:
	\begin{alignat}{2}
	\label{eq:lem:smp1}
	&\lambda \leq 0;  \quad&&\\
	\label{eq:lem:smp2}
	&\lambda > 0, \quad&&n \geq 1;\\
	\label{eq:lem:smp3}
	&\lambda > 0, \quad&&n > m;\\
	\label{eq:lem:smp4}
	0<\, &\lambda < 1, \quad&&n=m.
	\end{alignat}
	Let $u$ be a solution to \eqref{eq:D}.
	Then $u>0$ in $\Omega$ and $\frac{\partial u}{\partial \nu} < 0$ on $\partial \Omega$.
\end{lemma}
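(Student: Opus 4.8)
The plan is to reduce the statement to a single application of the classical strong maximum principle together with the Hopf boundary point lemma. Writing $f(s) = s^m - s^{m+1} - \lambda s^n$, so that the equation in \eqref{eq:D} reads $-\Delta u = f(u)$, I would first record that $u$ is bounded: under these hypotheses this follows from Lemma~\ref{lem:reg} (or from the uniform $L^\infty$-bound obtained below), so set $M := \|u\|_\infty < \infty$. The heart of the argument is then the claim that in each of the cases \eqref{eq:lem:smp1}--\eqref{eq:lem:smp4} there is a constant $c \geq 0$, depending only on $m,n,\lambda,M$, such that
$$
f(s) \geq -c\,s \qquad \text{for all } s \in [0,M].
$$
Granting this, since $u$ solves $-\Delta u = f(u)$ we obtain $-\Delta u + c\,u = f(u) + c\,u \geq 0$ in $\Omega$, i.e. the nonnegative $u$ is a supersolution of the linear operator $-\Delta + c$ with a nonnegative, bounded zeroth-order coefficient.

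To prove the claim it suffices, by continuity of $f$ and the fact that $s$ stays bounded away from $0$ on any $[s_0,M]$, to bound $f(s)/s = s^{m-1} - s^m - \lambda s^{n-1}$ from below as $s \to 0^+$. I would treat the four cases exactly as they are designed. If \eqref{eq:lem:smp1} holds then $-\lambda s^{n-1} \geq 0$, so $f(s)/s \geq -s^m \geq -M^m$. If \eqref{eq:lem:smp2} holds then $n-1 \geq 0$ gives $-\lambda s^{n-1} \geq -\lambda M^{n-1}$, whence $f(s)/s \geq -M^m - \lambda M^{n-1}$. If \eqref{eq:lem:smp3} holds then $n-m>0$, so choosing $s_1 \in (0,M]$ with $\lambda s_1^{\,n-m} \leq \tfrac12$ yields $s^{m-1} - \lambda s^{n-1} = s^{m-1}\bigl(1 - \lambda s^{\,n-m}\bigr) \geq 0$ on $(0,s_1]$, and hence $f(s)/s \geq -M^m$ there. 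Finally, if \eqref{eq:lem:smp4} holds then $n=m$ and $f(s)/s = (1-\lambda)s^{m-1} - s^m \geq -M^m$, where positivity of the first term is exactly guaranteed by $\lambda < 1$. In every case $f(s)/s$ is bounded below near $0$, and combining this with the finite minimum of $f(s)/s$ over $[s_1,M]$ produces the desired constant $c$.

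With the inequality $-\Delta u + c\,u \geq 0$ in hand, I would conclude as follows. By the strong maximum principle for supersolutions of $-\Delta + c$ with $c \geq 0$ (e.g. \cite{giltrud}), a nonnegative such supersolution that vanishes at an interior point must vanish identically; since $u \not\equiv 0$, this forces $u > 0$ in $\Omega$. Then, as $\Omega$ is smooth it satisfies the interior ball condition, and the Hopf boundary point lemma applied at any $x_0 \in \partial\Omega$ (where $u(x_0)=0$ is the minimum while $u>0$ inside) gives $\frac{\partial u}{\partial \nu}(x_0) < 0$. The main obstacle, and the only place where the hypotheses enter, is the lower bound $f(s) \geq -c\,s$ near $s=0$: this is precisely where the non-Lipschitz absorption term $-\lambda s^n$ with $n<1$ could drive $f(s)/s \to -\infty$ and destroy the linear lower bound (and, correspondingly, permit compact support solutions). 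The four cases \eqref{eq:lem:smp1}--\eqref{eq:lem:smp4} are exactly those in which this term is either absent or nonnegative, or of order at least $s$, or dominated near $0$ by the source term $s^m$; verifying this dichotomy carefully is the crux of the proof.
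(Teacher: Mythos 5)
Your proof is correct, but it takes a more unified route than the paper. The paper argues case by case, each time rewriting the equation with frozen coefficients so that the linear strong maximum principle applies directly: for \eqref{eq:lem:smp1} it writes $-\Delta u + u^{m}\,u = u^m - \lambda u^n \geq 0$, for \eqref{eq:lem:smp2} it uses the coefficient $\lambda u^{n-1}+u^m \geq 0$, and for \eqref{eq:lem:smp4} it uses $-\Delta u + u^m u = (1-\lambda)u^m \geq 0$; crucially, for \eqref{eq:lem:smp3} (where $n$ may be less than $1$, so no bounded nonnegative zeroth-order coefficient is available this way) it switches to a separate localization argument, applying the maximum principle on the sublevel set $A=\{u<r\}$ with $r$ chosen so that $1-r-\lambda r^{n-m}>0$. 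You instead compress all four cases into the single claim $f(s)\geq -c\,s$ on $[0,\|u\|_\infty]$, which turns $u$ into a supersolution of $-\Delta + c$ with one constant $c\geq 0$, and then invoke the strong maximum principle and Hopf's lemma once; your treatment of \eqref{eq:lem:smp3}, via $s_1$ with $\lambda s_1^{\,n-m}\leq \tfrac12$ plus the finite minimum of $f(s)/s$ on $[s_1,M]$, replaces the paper's localization argument and is arguably cleaner, while also making transparent that the only obstruction is the non-Lipschitz absorption term near $s=0$ (consistent with Remark \ref{rem:smp}). Two small remarks: your appeal to Lemma \ref{lem:reg} for boundedness is both unnecessary and slightly mismatched (that lemma carries extra hypotheses such as $m<2^*-1$); since solutions are understood as classical, i.e. in $C^2(\Omega)\cap C(\overline{\Omega})$ with $\Omega$ bounded, $M=\|u\|_\infty<\infty$ is automatic. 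Also, in case \eqref{eq:lem:smp2} your bound $s^{n-1}\leq M^{n-1}$ for $s\in(0,M]$ is fine precisely because $n\geq 1$, but it is worth saying so explicitly.
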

\begin{proof}
	Assuming that $u$ is a solution to \eqref{eq:D}, we denote by $v$ a copy of $u$, that is, $v \equiv u$ in $\Omega$.
	
	Let $\lambda \leq 0$. The problem \eqref{eq:D} can be written as 
	\begin{equation*}\label{eq:lemD2}
	\left\{
	\begin{aligned}
	-\Delta u + v^{m} u &= v^m - \lambda v^n \geq 0 &&\text{in}~ \Omega,\\
	u &= 0 &&\text{on}~ \partial\Omega,
	\end{aligned}
	\right.
	\end{equation*}
	and we apply the strong maximum principle (see, e.g., \cite[Section 3.2]{giltrud}) to this linear equation to obtain the desired result.
	
	If $\lambda > 0$ and $n \geq 1$, then $u$ satisfies the problem 
	\begin{equation*}\label{eq:lemD}
	\left\{
	\begin{aligned}
	-\Delta u + (\lambda v^{n-1} + v^{m}) u &= v^m \geq 0	&&\text{in}~ \Omega,\\
	u &= 0 &&\text{on}~ \partial\Omega,
	\end{aligned}
	\right.
	\end{equation*}
	where $\lambda v^{n-1} + v^{m} \geq 0$ in $\Omega$, and again the strong maximum principle gives the result.
	
	Assume that $\lambda>0$ and $n>m$. 
	Then $u$ satisfies
	\begin{equation*}
	\left\{
	\begin{aligned}
	-\Delta u &= v^m (1-v-\lambda v^{n-m})	&&\text{in}~ \Omega,\\
	u &= 0 &&\text{on}~ \partial\Omega.
	\end{aligned}
	\right.
	\end{equation*}
	Notice that for any $\lambda>0$ there exists $r>0$ such that $1-r-\lambda r^{n-m} > 0$.
	Suppose, by contradiction, that $u(x_0)=0$ for some $x_0 \in \Omega$. 
	Defining $A = \{x \in \Omega: u(x) < r\}$, we see that $A$ is a nonempty open set containing $x_0$, $-\Delta u \geq 0$ in $A$, $u=r$ on $\partial A \cap \Omega$, and $u=0$ on $\partial A \cap \partial \Omega$.
	Therefore, the strong maximum principle yields $u>0$ in $A$, a contradiction.
	
	Finally, if $0<\lambda<1$ and $n=m$, we write the problem \eqref{eq:D} as
	\begin{equation*}
	\left\{
	\begin{aligned}
	-\Delta u + v^{m} u &= (1-\lambda) v^m  \geq 0 &&\text{in}~ \Omega,\\
	u &= 0 &&\text{on}~ \partial\Omega,
	\end{aligned}
	\right.
	\end{equation*}
	and the strong maximum principle gives $u>0$ in $\Omega$ and $\frac{\partial u}{\partial n} < 0$ on $\partial \Omega$. 
\end{proof}

\begin{remark}\label{rem:smp}
	We see from Lemma \ref{lem:smp} that $n < \min\{m, 1\}$ is a \textit{necessary} condition in order to obtain a compact support solution to \eqref{eq:D} with $\lambda>0$.	
\end{remark}

\begin{lemma}\label{lem:bound}
	Assume that one of the following assumptions is satisfied:
	\begin{alignat}{2}
	\label{eq:bound3}
	-1 <\, &\lambda < 0, \quad &&n = m + 1;\\
	\label{eq:bound2}
	&\lambda < 0, \quad &&n < m+ 1;\\
	\label{eq:bound1}
	&\lambda \geq 0. \quad &&
	\end{alignat}
	If $u$ is a solution to \eqref{eq:D}, then there exists $M(\lambda)>0$ such that
	$$
	\|u\|_\infty \leq M(\lambda).
	$$
	In particular, in the case \eqref{eq:bound1}, $M(\lambda)=1$. If, in addition to \eqref{eq:bound1}, $\lambda>0$ and $n>m$, then $M(\lambda) = \min\{1, \lambda^\frac{1}{m-n}\}$.
\end{lemma}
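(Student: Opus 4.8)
The plan is to establish the bound by a truncation (Stampacchia-type) argument carried out directly at the level of weak solutions, so that no regularity or subcriticality hypothesis is needed. Write $f_\lambda(s) = (1-s)s^m - \lambda s^n$ for $s \geq 0$, so that, since any solution satisfies $u = u^+ \geq 0$, the equation in \eqref{eq:D} reads $-\Delta u = f_\lambda(u)$ in the weak sense. For a constant $M>0$ to be chosen in each case, I would test the weak formulation with the truncation $w = (u-M)^+$. Because $u = 0 \leq M$ on $\partial\Omega$ and $u \in W_0^{1,2}(\Omega)$, we have $w \in W_0^{1,2}(\Omega)$; moreover $0 \leq w \leq u$ pointwise and $u \in X$, so $w \in X$ is an admissible test function. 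Using $\nabla u \cdot \nabla w = |\nabla w|^2$ a.e. and that $w$ vanishes off $\{u > M\}$, this yields
$$
\int_\Omega |\nabla w|^2 \, dx = \int_{\{u>M\}} f_\lambda(u)\,(u - M) \, dx.
$$
The whole problem is thereby reduced to exhibiting, in each case, a value $M = M(\lambda) > 0$ for which $f_\lambda(s) \leq 0$ whenever $s \geq M$: the right-hand side is then nonpositive, forcing $\nabla w \equiv 0$ and hence, by $w \in W_0^{1,2}(\Omega)$, $w \equiv 0$, i.e. $u \leq M$ a.e. in $\Omega$. (These constants are exactly what one would read off by formally evaluating $0 \leq -\Delta u = f_\lambda(u)$ at an interior maximum, which serves as a sanity check.)

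The choice of $M$ and the verification of $f_\lambda(s) \leq 0$ for $s \geq M$ are elementary one-variable estimates. In case \eqref{eq:bound1} with $\lambda \geq 0$, I take $M = 1$: for $s \geq 1$ one has $(1-s)s^m \leq 0$ and $-\lambda s^n \leq 0$, so $f_\lambda(s) \leq 0$ and $M(\lambda)=1$. When in addition $\lambda > 0$ and $n > m$, I factor $f_\lambda(s) = s^m\big((1-s) - \lambda s^{n-m}\big)$ and set $M = \min\{1, \lambda^{1/(m-n)}\}$. The point $\lambda^{1/(m-n)}$ is precisely where $\lambda s^{n-m}=1$, and since $n-m>0$ the map $s \mapsto \lambda s^{n-m}$ is increasing; hence for $s \geq M$ one gets $\lambda s^{n-m} \geq 1 > 1-s$ when $\lambda \geq 1$, while for $\lambda<1$ one has $M=1$ and $1-s \leq 0 \leq \lambda s^{n-m}$ for $s\geq 1$. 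In both subcases $1-s \leq \lambda s^{n-m}$ on $\{s \geq M\}$, which gives $f_\lambda(s)\leq 0$ and the stated refined constant.

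For the cases with $\lambda < 0$ write $f_\lambda(s) = s^m - s^{m+1} + |\lambda| s^n$, so that after dividing by $s^m$ the sign condition becomes $s \geq 1 + |\lambda| s^{n-m}$. In case \eqref{eq:bound3}, where $n = m+1$, this reads $(1-|\lambda|)s \geq 1$; since $-1 < \lambda < 0$ gives $1 - |\lambda| = 1 + \lambda > 0$, the choice $M = 1/(1+\lambda) > 0$ works, and it is exactly here that the restriction $\lambda > -1$ is needed to keep the coefficient of the leading term positive. In case \eqref{eq:bound2}, where $n < m+1$ and hence $n - m < 1$, the function $s \mapsto s - 1 - |\lambda| s^{n-m}$ tends to $+\infty$ as $s \to \infty$ because the linear term dominates the (at most) sublinear one; thus it is nonnegative for all $s$ beyond some threshold $M = M(\lambda) > 0$, which yields the bound, only existence of $M$ being asserted here.

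The argument is essentially self-contained modulo two points. First, admissibility and integrability of the test function: since $u \in X$ and $0 \leq (u-M)^+ \leq u$, each of the terms $u^{m+1}$, $u^{m+2}$, $u^{n+1}$ that appear after pairing lies in $L^1(\Omega)$, so the weak identity above is legitimate. Second — and this is the only genuinely delicate point — the threshold analysis of $f_\lambda$ for $\lambda < 0$: the dichotomy at $n = m+1$ between the dominance of $-s^{m+1}$ and that of $|\lambda| s^n$ is exactly what separates the regimes \eqref{eq:bound2} and \eqref{eq:bound3} and explains why a finite bound must fail once $n > m+1$, or once $\lambda \leq -1$ at $n = m+1$. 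I expect the careful bookkeeping of these one-dimensional inequalities, rather than any functional-analytic difficulty, to be the main thing to get right.
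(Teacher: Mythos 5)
Your proof is correct, but it takes a genuinely different route from the paper's. The paper argues pointwise: it takes an interior maximum point $x_0$ with $u(x_0)=M=\|u\|_\infty$, uses $0 \leq -\Delta u(x_0) = M^m - M^{m+1} - \lambda M^n$, and reads off the bounds from this single inequality $f_\lambda(M)\geq 0$. That argument is shorter but implicitly requires the solution to be classical (so that $\Delta u$ can be evaluated at the maximum), i.e.\ it leans on the regularity provided by Lemma \ref{lem:reg}, which itself carries subcriticality hypotheses such as $m<2^*-1$. Your Stampacchia-type argument — testing the weak formulation with $(u-M)^+$ and choosing $M$ so that $f_\lambda \leq 0$ on $[M,\infty)$ — works directly for weak solutions in $X$ with no regularity or subcriticality assumptions, which is a genuine strengthening; the price is that you must verify the complementary one-variable statement ($f_\lambda\leq 0$ beyond a threshold, rather than $f_\lambda(M)\geq 0$ at the maximum), and your case-by-case checks of this are all correct, including the refined constant $\min\{1,\lambda^{1/(m-n)}\}$ and the role of $\lambda>-1$ when $n=m+1$. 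Both routes reduce to the same elementary inequalities for $f_\lambda(s)=(1-s)s^m-\lambda s^n$, so the two proofs are complementary: the paper's is the quickest path given its standing regularity results, yours is self-contained and applies verbatim to possibly non-classical weak solutions.
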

\begin{proof}
	Let $M := \max\limits_{x \in \Omega} u(x) = u(x_0) > 0$ for some $x_0 \in \Omega$, that is, $\|u\|_\infty = M$.
	Then
	$$
	0 \leq -\Delta u(x_0) = M^m - M^{m+1}  - \lambda M^n,
	$$
	and hence the desired bounds follow easily.
\end{proof}

We next state general existence results for \eqref{eq:D} obtained by application of the sub and supersolution method.
\begin{thm}\label{thm:existence}
	Assume that one of the following assumptions is satisfied: 
	\begin{alignat}{3}
	\label{eq:them:exist0}
	-1 <\, &\lambda < 0,  \quad &&m < 1, \quad &&n=m+1;\\
	\label{eq:them:exist2}
	&\lambda < 0,  \quad &&m < 1, \quad &&n<m+1;\\
	\label{eq:them:exist1}
	&\lambda < 0,  \quad && \quad &&n<1;\\
	\label{eq:them:exist3}
	&\lambda = 0,  \quad &&m < 1; \quad &&\\
	\label{eq:them:exist4}
	&\lambda > 0,  \quad &&m < 1, \quad &&n>m;\\
	\label{eq:them:exist5}
	0 < \, &\lambda < 1,  \quad &&m < 1, \quad &&n=m.
	\end{alignat}
	Then \eqref{eq:D} possesses a positive solution. 
\end{thm}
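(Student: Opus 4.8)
The plan is to apply the method of sub and supersolutions. The key observation is that, although the right-hand side $f(u) := (1-u)u^m - \lambda u^n$ is non-Lipschitz at $u=0$ whenever $m<1$ or $n<1$, it is nonetheless \emph{continuous} on $[0,+\infty)$ (since $m,n>0$). Hence, once I exhibit an ordered pair consisting of a subsolution $\underline{u}$ and a supersolution $\overline{u}$ with $0\leq\underline{u}\leq\overline{u}$ and $\underline{u}\not\equiv 0$, a solution $u$ of \eqref{eq:D} with $\underline{u}\leq u\leq\overline{u}$ is produced by the sub-supersolution principle for continuous nonlinearities. Positivity and smoothness of $u$ are then automatic: from $u\geq\underline{u}>0$ in $\Omega$ together with Lemma \ref{lem:reg}, the obtained $u$ is a classical positive solution.

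For the supersolution I would take the constant $\overline{u}\equiv M>0$. Since $-\Delta\overline{u}=0$, the supersolution inequality collapses to the scalar condition $1-M\leq\lambda M^{\,n-m}$. When $\lambda\geq 0$, i.e.\ in the cases \eqref{eq:them:exist3}, \eqref{eq:them:exist4}, \eqref{eq:them:exist5}, the choice $M=1$ works at once. When $\lambda<0$ and $n<m+1$, i.e.\ in \eqref{eq:them:exist2} and \eqref{eq:them:exist1} (where $n<1\leq m+1$), the map $M\mapsto 1-M+|\lambda|M^{\,n-m}$ tends to $-\infty$ because $n-m<1$, so any sufficiently large $M$ is admissible. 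Finally, in \eqref{eq:them:exist0} ($n=m+1$, $-1<\lambda<0$) the condition reads $(1-|\lambda|)M\geq 1$, solvable precisely because $|\lambda|<1$. These thresholds are exactly the a priori bounds recorded in Lemma \ref{lem:bound}.

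For the subsolution I would take $\underline{u}:=\varepsilon\varphi_1$ with $\varepsilon>0$ small. As $-\Delta\underline{u}=\lambda_1\underline{u}$ and $\underline{u}=0$ on $\partial\Omega$, dividing the subsolution inequality by $s:=\varepsilon\varphi_1(x)\in(0,\varepsilon]$ turns it into
\[
\lambda_1 \;\leq\; (1-s)\,s^{\,m-1} - \lambda\,s^{\,n-1} \;=:\; h(s), \qquad s\in(0,\varepsilon].
\]
In every case $h$ carries a singular term that blows up positively and dominates as $s\to 0^+$. If $m<1$ (cases \eqref{eq:them:exist0}, \eqref{eq:them:exist2}, \eqref{eq:them:exist3}, \eqref{eq:them:exist4}, \eqref{eq:them:exist5}) I factor $h(s)=s^{\,m-1}c(s)$ with $c(s)=(1-s)-\lambda s^{\,n-m}$, and $\liminf_{s\to 0^+}c(s)>0$ in each of them (this is where the bound $\lambda<1$ of \eqref{eq:them:exist5} enters, for there $c(s)\to 1-\lambda$; when $n>m$ one has $c(s)\to 1$ and when $n<m$ one has $c(s)\to+\infty$ as $\lambda<0$). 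If instead $m=1$, then necessarily $\lambda<0$ and $n<1$ (case \eqref{eq:them:exist1}), and it is the term $|\lambda|s^{\,n-1}$ that blows up. Since the dominant singular factor ($s^{\,m-1}$, or $s^{\,n-1}$ when $m=1$) is decreasing in $s$, its minimum over $(0,\varepsilon]$ is attained at $s=\varepsilon$ and tends to $+\infty$ as $\varepsilon\to 0^+$, while the remaining factor stays bounded away from $0$; hence $\inf_{(0,\varepsilon]}h\to+\infty$, and $h\geq\lambda_1$ holds on the whole interval for all small $\varepsilon$.

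Shrinking $\varepsilon$ so that $\varepsilon<M$ gives $\underline{u}=\varepsilon\varphi_1\leq\varepsilon<M=\overline{u}$, so the pair is ordered and $\underline{u}\not\equiv 0$, and the sub-supersolution theorem delivers the desired positive solution. The main obstacle I anticipate is not the construction of the barriers — which is elementary — but the justification of the method itself: since $f$ fails to be Lipschitz at $0$, the usual monotone-iteration scheme (which presupposes a one-sided bound making $u\mapsto f(u)+Ku$ nondecreasing on $[0,M]$) does not apply verbatim, and one must invoke a form of the sub-supersolution principle valid for merely continuous $f$, e.g.\ through truncation combined with Schauder's fixed point theorem. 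The only other delicate point is verifying $h\geq\lambda_1$ \emph{uniformly} on $(0,\varepsilon]$, rather than just in the limit $s\to 0^+$, which is precisely what the monotonicity of the singular term above secures.
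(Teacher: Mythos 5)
Your proposal is correct and follows essentially the same route as the paper: a large constant supersolution $\overline{u}\equiv M$ paired with the subsolution $\underline{u}=\varepsilon\varphi_1$, combined via the sub- and supersolution method, with your verification of the two scalar inequalities merely more explicit than the paper's. One cosmetic point: since the restriction $m\leq 1$ is dropped in Section \ref{section:prelim}, case \eqref{eq:them:exist1} permits any $m>0$, so your second alternative should read $m\geq 1$ rather than $m=1$; the argument is unchanged, because for $s\leq 1$ the term $(1-s)s^{m-1}$ is bounded and nonnegative while $|\lambda|s^{n-1}$ blows up monotonically as $s\to 0^+$.
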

\begin{proof}
	Considering the function $\overline{u} \equiv M$ in $\Omega$, where $M>0$ is a constant, we easily see that 
	$$
	-\Delta \overline{u} = 0 \geq M^m - M^{m+1} - \lambda M^n \equiv (1 - \overline{u}) \overline{u}^m - \lambda \overline{u}^n 
	\quad \text{in}~ \Omega
	$$ 
	for any sufficiently large $M$ under either of the assumptions \eqref{eq:them:exist0}-\eqref{eq:them:exist5}. That is, $\overline{u}$ is a supersolution to \eqref{eq:D}. 
	Considering the function $\underline{u} = c \varphi_1$, we obtain
	$$
	-\Delta \underline{u} 
	= 
	\lambda_1 c \varphi_1 
	\leq 
	c^m \varphi_1^m - c^{m+1} \varphi_1^{m+1} - \lambda c^n \varphi_1^n 
	\equiv
	(1- \underline{u})\underline{u}^m - \lambda \underline{u}^n 
	\quad \text{in}~ \Omega
	$$
	for any sufficiently small $c>0$, provided any of the assumptions \eqref{eq:them:exist0}-\eqref{eq:them:exist5} holds. Thus, $\underline{u}$ is a subsolution to \eqref{eq:D}. 
	Therefore, applying the sub and supersolution method with $c\varphi_1 \leq M$ in $\Omega$ and recalling that $\varphi_1>0$ in $\Omega$, we obtain the existence of a positive solution to \eqref{eq:D}. 
\end{proof}
\begin{thm}\label{thm:existence2}
	Let $m=1$.
	Assume that one of the following assumptions is satisfied: 
	\begin{alignat}{3}
	\label{eq:them:exist0m}
	&\lambda = 0,  \quad &&\lambda_1 < 1; \quad &&\\
	\label{eq:them:exist2m}
	&\lambda < 0,  \quad &&\lambda_1 \leq 1, \quad &&n<2;\\
	\label{eq:them:exist1m}
	&\lambda > 0,  \quad &&\lambda_1 < 1, \quad &&n>1.
	\end{alignat}
	Then \eqref{eq:D} possesses a positive solution. 
	Moreover, if $\lambda_1 \geq 1$ and \eqref{eq:D} possesses a solution, then $\lambda<0$.
\end{thm}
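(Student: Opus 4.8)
The plan is to mirror the proof of Theorem~\ref{thm:existence}: for each of the three parameter regimes I would construct an ordered pair of sub- and supersolutions and invoke the sub and supersolution method (which applies even in the non-Lipschitz range $n<1$, exactly as in Theorem~\ref{thm:existence}), and then establish the ``moreover'' assertion by testing the equation against the first eigenfunction $\varphi_1$. For the supersolution I take the constant $\overline{u} \equiv M$ with $M$ large. Since $-\Delta \overline{u} = 0$ and $m=1$, the supersolution inequality reduces to $M - M^2 - \lambda M^n \leq 0$, i.e.\ $M + \lambda M^{n-1} \geq 1$. When $\lambda = 0$ this holds for $M \geq 1$; when $\lambda > 0$ and $n > 1$ both summands are nonnegative and the sum is $\geq 1$ already for $M = 1$; when $\lambda < 0$ and $n < 2$ the exponent $n-1 < 1$ forces $M + \lambda M^{n-1} \to +\infty$ as $M \to \infty$, so the inequality holds for $M$ large.

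For the subsolution I take $\underline{u} = c\varphi_1$ with $c>0$ small; since $-\Delta(c\varphi_1) = \lambda_1 c\varphi_1$ and $\varphi_1 = 0$ on $\partial\Omega$, dividing by $c\varphi_1 > 0$ reduces the subsolution inequality to
\[
\lambda_1 \leq 1 - c\varphi_1 - \lambda c^{n-1}\varphi_1^{n-1} \quad \text{in } \Omega.
\]
In the regimes with the strict inequality $\lambda_1 < 1$ this is immediate for $c$ small: the term $-c\varphi_1$ is uniformly small (as $\varphi_1\leq 1$), while $-\lambda c^{n-1}\varphi_1^{n-1}$ is nonnegative when $\lambda < 0$ and is uniformly small when $\lambda>0$, $n>1$; hence the right-hand side exceeds $\lambda_1$ for $c$ small.

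The delicate point, and the step I expect to be the main obstacle, is the borderline case $\lambda < 0$, $\lambda_1 = 1$, $n<2$, where the right-hand side must dominate the \emph{equality} value $1$. Here the inequality reduces to $c\varphi_1 \leq |\lambda|\,c^{n-1}\varphi_1^{n-1}$, i.e.\ $(c\varphi_1)^{2-n} \leq |\lambda|$; since $2-n>0$ and $c\varphi_1 \leq c$ we get $(c\varphi_1)^{2-n} \leq c^{2-n}\to 0$ as $c\to 0^+$, so the sign-definite contribution of the term $-\lambda u^n$ (positive precisely because $\lambda<0$) is what saves the estimate. One must also verify the estimate near $\partial\Omega$, where $\varphi_1\to 0$ and, for $n<1$, $\varphi_1^{n-1}$ blows up, but there the extra term only helps. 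With $\underline{u} = c\varphi_1 \leq c \leq M = \overline{u}$, the sub and supersolution method yields a solution $u$ with $c\varphi_1 \leq u \leq M$, hence $u>0$ in $\Omega$.

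Finally, for the ``moreover'' part, I multiply the equation $-\Delta u = u - u^2 - \lambda u^n$ (recall $m=1$) by $\varphi_1$ and integrate, using $-\Delta\varphi_1 = \lambda_1\varphi_1$ and the zero boundary data, to obtain
\[
(\lambda_1 - 1)\int_\Omega u\varphi_1\,dx = -\int_\Omega u^2\varphi_1\,dx - \lambda\int_\Omega u^n\varphi_1\,dx.
\]
If $\lambda_1 \geq 1$, the left-hand side is $\geq 0$, while $\int_\Omega u^2\varphi_1\,dx > 0$ and $\int_\Omega u^n\varphi_1\,dx > 0$ (since $u\geq 0$, $u\not\equiv 0$, and $\varphi_1>0$ in $\Omega$); hence $-\lambda\int_\Omega u^n\varphi_1\,dx \geq \int_\Omega u^2\varphi_1\,dx > 0$, which forces $\lambda<0$. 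The feature making this argument clean is $m=1$, which aligns the linear term $u^m = u$ with the eigenvalue term $\lambda_1\int_\Omega u\varphi_1\,dx$.
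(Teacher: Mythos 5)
Your proposal is correct, and it splits naturally into two halves. The existence part coincides with the paper's own argument: the same constant supersolution $\overline{u}\equiv M$ and subsolution $\underline{u}=c\varphi_1$, ordered via $c\varphi_1\leq c\leq M$; in fact you spell out the one delicate point that the paper leaves implicit, namely the borderline regime $\lambda=0$ excluded, $\lambda<0$, $\lambda_1=1$, $n<2$, where the subsolution inequality survives only because the sign-definite term $|\lambda|\,c^{n-1}\varphi_1^{n-1}$ dominates $c\varphi_1$ for small $c$ (i.e.\ $(c\varphi_1)^{2-n}\leq|\lambda|$), which is precisely why \eqref{eq:them:exist2m} may allow $\lambda_1=1$ while \eqref{eq:them:exist0m} and \eqref{eq:them:exist1m} cannot. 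For the ``moreover'' assertion you take a genuinely different route: you test the equation with $\varphi_1$, obtaining $(\lambda_1-1)\int_\Omega u\varphi_1\,dx=-\int_\Omega u^2\varphi_1\,dx-\lambda\int_\Omega u^n\varphi_1\,dx$ and reading off the sign contradiction, whereas the paper tests with the solution $u$ itself (the fiber condition $\phi_u'(1)=0$, i.e.\ $\langle E_\lambda'(u),u\rangle=0$) to deduce $\int_\Omega|\nabla u|^2\,dx<\int_\Omega u^2\,dx$ and then invokes the variational characterization of $\lambda_1$. The two arguments are equally short and both valid; yours is marginally more elementary, needing only the eigenvalue equation $-\Delta\varphi_1=\lambda_1\varphi_1$ and $\varphi_1>0$ rather than the Rayleigh-quotient characterization, and it is exactly the device the paper itself deploys for the nonexistence estimates in Lemma \ref{lem:IIIb} and Theorems \ref{thm:VI1}, \ref{thm:VIIb}, and \ref{thm:caseVIII}, so it fits the paper's toolbox seamlessly.
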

\begin{proof}
	As in Theorem \ref{thm:existence}, we consider the function $\overline{u} \equiv M$ in $\Omega$, where $M>0$ is a constant, and readily see that
	$$
	-\Delta \overline{u} = 0 \geq M - M^{2} - \lambda M^n \equiv (1 - \overline{u}) \overline{u} - \lambda \overline{u}^n 
	\quad \text{in}~ \Omega
	$$ 
	for any sufficiently large $M$ under either of the assumptions \eqref{eq:them:exist0m}-\eqref{eq:them:exist1m}. 
	Thus, $\overline{u}$ is a supersolution to \eqref{eq:D}. 
	On the other hand, taking the function $\underline{u} = c \varphi_1$, we again get
	$$
	-\Delta \underline{u} 
	= 
	\lambda_1 c \varphi_1 
	\leq 
	c \varphi_1 - c^{2} \varphi_1^{2} - \lambda c^n \varphi_1^n 
	\equiv
	(1 - \underline{u})\underline{u} - \lambda \underline{u}^n 
	\quad \text{in}~ \Omega
	$$
	for any sufficiently small $c>0$, provided any of the assumptions \eqref{eq:them:exist0m}-\eqref{eq:them:exist1m} holds. 
	That is, $\underline{u}$ is a subsolution to \eqref{eq:D}. 
	Applying the sub and supersolution method with $c \varphi_1 \leq M$ in $\Omega$, we derive the existence of a positive solution to \eqref{eq:D}. 
	
	Assume now that $\lambda_1 \geq 1$, and \eqref{eq:D} possesses a solution $u$ for some $\lambda \geq 0$. 
	Consider the fibers
	\begin{equation}\label{eq:fibers:2}
	\phi_{u}(t) 
	= \frac{t^2}{2} \left(\int_\Omega |\nabla u|^2 \, dx - \int_\Omega u^{2} \, dx \right)
	+
	\frac{t^{3}}{3} \int_\Omega u^{3} \, dx  
	+
	\frac{\lambda t^{n+1}}{n+1} \int_\Omega u^{n+1} \, dx
	\end{equation}
	associated with $E_\lambda(u)$. Since $u$ is a solution, we must have $\left.\frac{d \phi_u(t)}{dt}\right|_{t=1} = 0$, which yields 
	$$
	\int_\Omega |\nabla u|^2 \, dx - \int_\Omega u^{2} \, dx  < 0.
	$$
	Hence, by the variational characterization of $\lambda_1$, we conclude that $\lambda_1 < 1$, a contradiction.
\end{proof}

We now state a uniqueness result for \eqref{eq:D}.
\begin{thm}\label{thm:uniq}
	Let $m\leq 1$. 
	Assume that one of the following assumptions is satisfied: 
	\begin{alignat}{2}
	\label{eq:them:uniq0}
	&\lambda > -1,  \quad &&n = m+ 1;\\
	\label{eq:them:uniq2}
	&\lambda \geq 0,  \quad &&n \geq 1;\\
	\label{eq:them:uniq3}
	&\lambda > 0,  \quad &&n > m;\\
	\label{eq:them:uniq4}
	0<\,&\lambda < 1, \quad &&n = m;\\
	\label{eq:them:uniq1}
	&\lambda \leq 0,  \quad &&n \leq 1.
	\end{alignat}
	Then \eqref{eq:D} possesses at most one positive solution.
\end{thm}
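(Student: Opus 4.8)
The plan is to reduce the entire statement to a monotonicity property of the quotient $g(t):=f(t)/t$, where $f(t)=(1-t)t^m-\lambda t^n=t^m-t^{m+1}-\lambda t^n$ is the reaction term, and then to invoke the Brezis--Oswald / D\'iaz--Sa\'a uniqueness mechanism. Suppose $u_1,u_2$ are two positive solutions. Under each of \eqref{eq:them:uniq0}--\eqref{eq:them:uniq1} Lemma \ref{lem:smp} applies, so $u_1,u_2>0$ in $\Omega$ with $\partial u_i/\partial\nu<0$ on $\partial\Omega$, and Lemma \ref{lem:bound} gives $u_1,u_2\in L^\infty(\Omega)$. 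The Hopf property keeps $u_1/u_2$ and $u_2/u_1$ bounded up to $\partial\Omega$, so $\psi_1:=(u_1^2-u_2^2)/u_1$ and $\psi_2:=(u_2^2-u_1^2)/u_2$ are admissible test functions in $W_0^{1,2}(\Omega)$. Testing the equation for $u_1$ with $\psi_1$, the one for $u_2$ with $\psi_2$, and adding, the reaction side collapses to $\int_\Omega(g(u_1)-g(u_2))(u_1^2-u_2^2)\,dx$, while the Dirichlet side equals $\int_\Omega(|\nabla u_1-(u_1/u_2)\nabla u_2|^2+|\nabla u_2-(u_2/u_1)\nabla u_1|^2)\,dx\ge0$. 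Hence it suffices to show that $g$ is strictly decreasing on an interval containing the ranges of $u_1$ and $u_2$: then $(g(u_1)-g(u_2))(u_1^2-u_2^2)=(g(u_1)-g(u_2))(u_1-u_2)(u_1+u_2)\le0$ pointwise, and since its integral is nonnegative it must vanish a.e., forcing $u_1\equiv u_2$.

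Computing $g'(t)=(m-1)t^{m-2}-m\,t^{m-1}-\lambda(n-1)t^{n-2}$, in the cases \eqref{eq:them:uniq2} and \eqref{eq:them:uniq1} I would simply read off the signs: the first term is $\le0$ because $m\le1$, the second is $<0$ because $m>0$, and the $\lambda$-term is $\le0$ as well, since $\lambda\ge0,\ n\ge1$ in \eqref{eq:them:uniq2} and $\lambda\le0,\ n\le1$ in \eqref{eq:them:uniq1}. For \eqref{eq:them:uniq0} and \eqref{eq:them:uniq4} I would first absorb the $\lambda$-term: when $n=m+1$ one has $g(t)=t^{m-1}-(1+\lambda)t^m$ with $1+\lambda>0$, and when $n=m$ one has $g(t)=(1-\lambda)t^{m-1}-t^m$ with $1-\lambda>0$, so that in both situations $g'<0$ on all of $(0,\infty)$. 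This settles four of the five cases, with $g$ strictly decreasing on the whole half-line.

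The only genuinely delicate case is \eqref{eq:them:uniq3} ($\lambda>0$, $n>m$). If $n\ge1$ the $\lambda$-term in $g'$ is again $\le0$ and $g$ decreases globally, so the new content is $m<n<1$, where that term has the wrong sign and $g$ need not be monotone on $(0,\infty)$. Here I would localize using the sharp bound of Lemma \ref{lem:bound}: every solution satisfies $\|u\|_\infty\le M(\lambda)=\min\{1,\lambda^{1/(m-n)}\}$, so it is enough to have $g$ strictly decreasing on $(0,M(\lambda)]$. Setting $h(t):=t^{2-m}g'(t)=(m-1)-mt-\lambda(n-1)t^{n-m}$, the relation $M(\lambda)\le\lambda^{1/(m-n)}$ raised to the power $n-m>0$ yields $\lambda t^{n-m}\le\lambda M(\lambda)^{n-m}\le1$ for $t\in(0,M(\lambda)]$. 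Consequently $-\lambda(n-1)t^{n-m}=(1-n)\lambda t^{n-m}\le 1-n$, whence $h(t)\le(m-1)-mt+(1-n)=m-n-mt<0$ on this interval, using $n>m$. Thus $g$ is strictly decreasing on the range of the solutions and the argument of the first paragraph applies verbatim.

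The crux is exactly this last case: the failure of global monotonicity of $f(t)/t$ forces one to combine the uniqueness identity with the precise a priori bound $M(\lambda)=\min\{1,\lambda^{1/(m-n)}\}$ and to exploit the saturating relation $\lambda t^{n-m}\le1$ that it encodes on the confined range. The only other point requiring care, shared by all cases, is the admissibility of the D\'iaz--Sa\'a test functions in the presence of the non-Lipschitz nonlinearity; this is precisely what the Hopf boundary behaviour from Lemma \ref{lem:smp} secures.
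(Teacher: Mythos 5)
Your proof is correct and follows essentially the same route as the paper: both reduce uniqueness to strict monotonicity of $f(s)/s$ via the Brezis--Oswald mechanism (which you spell out with D\'iaz--Sa\'a test functions while the paper simply cites \cite{brezisoswald}), verify it directly in the four easy cases, and handle \eqref{eq:them:uniq3} by restricting attention to $\left(0,\lambda^{1/(m-n)}\right]$ using Lemma \ref{lem:bound}. Your key inequality $\lambda t^{n-m}\le 1$ on that interval is exactly the paper's bound $s^{m-n}\ge\lambda$, so the crux of the delicate case is identical.
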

\begin{proof}	
	Assume that there exists a solution $u$ to \eqref{eq:D} such that $u>0$ in $\Omega$. 
	The uniqueness of this solution follows by applying the method of \cite[Section 2]{brezisoswald}, see also \cite{HMV}. Indeed, to use the result of \cite{brezisoswald} it is enough to show that $\frac{f(s)}{s}$ is (strictly) decreasing for $s>0$, where $f(s)$ is the nonlinearity in \eqref{eq:D}, i.e.,
	$$
	f(s) = s^m - s^{m+1} - \lambda s^n.
	$$
	Since $f(s)$ is smooth for $s>0$, we see that the monotonicity condition for $\frac{f(s)}{s}$ is equivalent to showing that
	\begin{equation}\label{eq:fss>0}
	f(s) - s f'(s) = (1-m) s^m + m s^{m+1} + \lambda (n-1) s^n > 0
	\quad 
	\text{for all}~
	s>0.
	\end{equation}
	Under either of the assumptions \eqref{eq:them:uniq0}, \eqref{eq:them:uniq2}, \eqref{eq:them:uniq4}, \eqref{eq:them:uniq1}, the validity of \eqref{eq:fss>0} can be justified directly.
	Let us show that \eqref{eq:fss>0} holds true also under the assumption \eqref{eq:them:uniq3}. 
	Recall that, in view of Lemma \ref{lem:bound}, under this assumption any solution $u$ to \eqref{eq:D} satisfies $\|u\|_\infty \leq \lambda^\frac{1}{m-n}$. Thus, it is enough to obtain \eqref{eq:fss>0} only for $s \in \left(0, \lambda^\frac{1}{m-n}\right]$.
	Dividing by $s^n$, we see that \eqref{eq:fss>0} is equivalent to
	$$
	(1-m)s^{m-n} + m s^{m-n+1} > -\lambda (n-1).
	$$
	Using the bound $s^{m-n} \geq \lambda$, we conclude that \eqref{eq:fss>0} follows from the inequality
	$$
	m s^{m-n+1} > -\lambda (n-m)
	$$
	which is trivially satisfied for $s>0$. The proof is complete.
\end{proof}

Finally, we show that $E_\lambda$ satisfies the Palais-Smale condition under several assumptions on $m$ and $n$ which will be used below.
\begin{lemma}\label{lem:PS}
	Let $m<2^*-2$ and $n<2^*-1$.
	Assume that one of the following assumptions is satisfied:
	\begin{alignat}{3}
	\label{eq:lem:PS1}
	&\lambda \geq 0,  \quad &&m < 1, \quad &&n \leq m+1;\\
	\label{eq:lem:PS2}
	&						&&m < n-1. &&
	\end{alignat}
	Then $E_\lambda$ satisfies the Palais-Smale condition on $W_0^{1,2}(\Omega)$. Namely, if $\{u_n\} \subset W_0^{1,2}(\Omega)$ and there exists $C>0$ such that $|E_\lambda(u_n)| \leq C$ for all $n \in \mathbb{N}$, and $\|E_\lambda'(u_n)\| \to 0$ as $n \to +\infty$, then $\{u_n\}$ has a strongly convergent subsequence.
\end{lemma}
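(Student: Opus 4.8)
\section*{Proof plan}

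The plan is the classical two-step verification of the Palais--Smale condition. First I would show that any Palais--Smale sequence $\{u_n\} \subset W_0^{1,2}(\Omega)$ is bounded; then, extracting a weakly convergent subsequence $u_n \rightharpoonup u$ in $W_0^{1,2}(\Omega)$, I would upgrade this to strong convergence. The second step rests on the standing hypotheses $m < 2^*-2$ and $n < 2^*-1$, which guarantee $m+1, m+2, n+1 < 2^*$, so that the Rellich--Kondrachov theorem yields $u_n \to u$ strongly in $L^{m+1}(\Omega)$, $L^{m+2}(\Omega)$ and $L^{n+1}(\Omega)$. Throughout I would use the pairing $\langle E_\lambda'(u), u\rangle = \|\nabla u\|_2^2 - \int_\Omega (u^+)^{m+1}\,dx + \int_\Omega (u^+)^{m+2}\,dx + \lambda \int_\Omega (u^+)^{n+1}\,dx$.

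For boundedness I would exploit the combination $E_\lambda(u_n) - \tfrac{1}{\theta}\langle E_\lambda'(u_n), u_n\rangle$, choosing $\theta$ to annihilate the leading positive term of the energy. Under \eqref{eq:lem:PS1} I would take $\theta = m+2$: this cancels the $\int_\Omega (u_n^+)^{m+2}\,dx$ term, leaves the gradient term with the positive coefficient $\tfrac12 - \tfrac{1}{m+2}$ (positive since $m>0$), and leaves the $\lambda$-term with coefficient $\lambda\bigl(\tfrac{1}{n+1} - \tfrac{1}{m+2}\bigr) \geq 0$, which is harmless because $\lambda \geq 0$ and $n \leq m+1$. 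The only adverse contribution is $\bigl(\tfrac{1}{m+2}-\tfrac{1}{m+1}\bigr)\int_\Omega (u_n^+)^{m+1}\,dx$, a lower-order term that, since $m+1<2$, is absorbed into the gradient term via the Sobolev inequality and Young's inequality. Under \eqref{eq:lem:PS2} I would instead take $\theta = n+1$: this cancels the $\lambda$-term \emph{entirely}, so that the sign of $\lambda$ becomes irrelevant (which is essential here, as $\lambda$ is unrestricted); the gradient term survives with positive coefficient $\tfrac12 - \tfrac{1}{n+1}$, and the term $\int_\Omega (u_n^+)^{m+2}\,dx$ survives with positive coefficient $\tfrac{1}{m+2}-\tfrac{1}{n+1}$ (positive because $n>m+1$), while the negative contribution involving $\int_\Omega (u_n^+)^{m+1}\,dx$ is of strictly lower order and is absorbed into this $(u_n^+)^{m+2}$-term by Young's inequality. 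In both regimes the bound $|E_\lambda(u_n)|\leq C$, together with $|\langle E_\lambda'(u_n),u_n\rangle| \leq \|E_\lambda'(u_n)\|\,\|u_n\|$ and $\|E_\lambda'(u_n)\|\to 0$, yields an inequality of the form $c\|\nabla u_n\|_2^2 \leq C + o(1)\|\nabla u_n\|_2 + o\bigl(\|\nabla u_n\|_2^2\bigr)$, forcing $\{\|\nabla u_n\|_2\}$ to be bounded.

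For the strong convergence I would test the derivative against $u_n - u$. Since $\{u_n-u\}$ is bounded and $\|E_\lambda'(u_n)\|\to 0$, we have $\langle E_\lambda'(u_n), u_n-u\rangle \to 0$. Expanding the pairing, the three nonlinear terms $\int_\Omega (u_n^+)^{m}(u_n-u)\,dx$, $\int_\Omega (u_n^+)^{m+1}(u_n-u)\,dx$ and $\lambda\int_\Omega (u_n^+)^{n}(u_n-u)\,dx$ each tend to zero: applying H\"older's inequality with the conjugate pairs $\bigl(\tfrac{m+1}{m}, m+1\bigr)$, $\bigl(\tfrac{m+2}{m+1}, m+2\bigr)$ and $\bigl(\tfrac{n+1}{n}, n+1\bigr)$ respectively, the first factor stays bounded while the second factor, one of $\|u_n-u\|_{m+1}$, $\|u_n-u\|_{m+2}$, $\|u_n-u\|_{n+1}$, vanishes by the strong $L^p$-convergence established above. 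Hence $\int_\Omega \nabla u_n \cdot \nabla(u_n-u)\,dx \to 0$. Combining this with $\int_\Omega \nabla u \cdot \nabla(u_n-u)\,dx \to 0$, which holds because $u_n \rightharpoonup u$ in $W_0^{1,2}(\Omega)$, gives $\|\nabla(u_n-u)\|_2^2 \to 0$, i.e.\ $u_n \to u$ strongly.

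The main obstacle is the boundedness step, and specifically the necessity of tailoring $\theta$ to each regime. The choice $\theta = m+2$ fails under \eqref{eq:lem:PS2}, because there the genuinely leading term is $\int_\Omega (u^+)^{n+1}\,dx$ and, with $\lambda$ possibly negative, only $\theta = n+1$ removes this unsigned contribution; conversely $\theta=n+1$ would leave an uncontrolled negative gradient remainder under \eqref{eq:lem:PS1}. Once boundedness is secured, the strong convergence is routine given the compactness of the subcritical embeddings.
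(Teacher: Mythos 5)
Your proposal is correct and takes essentially the same route as the paper: the paper forms $(m+2)E_\lambda(u_n)-\left<E_\lambda'(u_n),u_n\right>$ under \eqref{eq:lem:PS1} and $(n+1)E_\lambda(u_n)-\left<E_\lambda'(u_n),u_n\right>$ under \eqref{eq:lem:PS2}, which are exactly your choices $\theta=m+2$ and $\theta=n+1$ up to scaling, with the same absorption of the adverse $\int_\Omega |u_n^+|^{m+1}\,dx$ term (via Sobolev in the first case, via the $|u_n^+|^{m+2}$ term in the second). The only cosmetic difference is that, once boundedness is established, the paper concludes strong convergence by citing \cite[Chapter II, Proposition 2.2]{struwe}, whereas you write out the standard argument testing $E_\lambda'(u_n)$ against $u_n-u$; these are the same argument.
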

\begin{proof}
	To prove our claim under the assumption \eqref{eq:lem:PS1}, we use the Sobolev embedding theorem to get
	\begin{align*}
	&C + o(1) \left(\int_\Omega |\nabla u_n|^2 \,dx\right)^\frac{1}{2} 
	\geq (m+2) E_\lambda(u_n) - \left< E_\lambda'(u_n), u_n \right> 
	\\
	&=
	\frac{m}{2} \int_\Omega |\nabla u_n|^2 \,dx 
	- 
	\frac{1}{m+1} \int_\Omega |u_n^+|^{m+1} \, dx
	+ 
	\frac{\lambda (m-n+1)}{n+1} \int_\Omega |u_n^+|^{n+1} \, dx
	\\
	&\geq 
	\frac{m}{2} \int_\Omega |\nabla u_n|^2 \,dx 
	-
	C \left(\int_\Omega |\nabla u_n|^2 \,dx\right)^\frac{m+1}{2}, 
	\end{align*}
	which implies that $\|\nabla u_n\|_2 < C$ for all $n \in \mathbb{N}$, and hence $u_n$ converges weakly in $W_0^{1,2}(\Omega)$ and strongly in $L^r(\Omega)$, $r \in [1,2^*)$, to some $u_0$, up to a subsequence. 
	Therefore, applying \cite[Chapter II, Proposition 2.2]{struwe}, we conclude that $u_n \to u_0$ strongly in $W_0^{1,2}(\Omega)$.
	
	To prove our claim under the assumption \eqref{eq:lem:PS2}, we use the H\"older inequality to get
	\begin{align*}
	&C + o(1) \left(\int_\Omega |\nabla u_n|^2 \,dx\right)^\frac{1}{2} 
	\geq (n+1) E_\lambda(u_n) - \left< E_\lambda'(u_n), u_n \right> 
	\\
	&=
	\frac{n-1}{2} \int_\Omega |\nabla u_n|^2 \,dx 
	-
	\frac{n-m}{m+1} \int_\Omega |u_n^+|^{m+1} \, dx
	+ 
	\frac{n-m-1}{m+2} \int_\Omega |u_n^+|^{m+2} \, dx
	\\
	&\geq 
	\frac{n-1}{2} \int_\Omega |\nabla u_n|^2 \,dx 
	-
	\frac{n-m}{m+1} \int_\Omega |u_n^+|^{m+1} \, dx
	+
	C \left( \int_\Omega |u_n^+|^{m+1} \, dx\right)^\frac{m+2}{m+1},
	\end{align*}
	which implies that $\|\nabla u_n\|_2 < C$ for all $n \in \mathbb{N}$, and the rest of the proof follows as above.
\end{proof}

\section{Case \ref{I}}\label{sec:I}
In this section, we consider the case $0<n< m<1$.
\begin{thm}\label{thm:caseI}
	Let $0<n<m<1$. Then there exists $\lambda^* > 0$ satisfying
	\begin{equation}\label{eq:caseIbound}
	\lambda^* \leq \frac{(m-n)^{m-n}}{(m-n+1)^{m-n+1}} < 1,
	\end{equation}
	such that the problem \eqref{eq:D} has a solution $u_\lambda$ for any $\lambda < \lambda^*$ and has no solution for any $\lambda > \lambda^*$. 
	If $\lambda \leq 0$, then $u_\lambda$ is positive in $\Omega$ and unique. 
	Moreover, there exists $\lambda_* \in (0, \lambda^*)$ such that $E_\lambda(u_\lambda) < 0$ for any $\lambda \in (0, \lambda_*)$.
\end{thm}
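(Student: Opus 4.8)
The plan is to define $\lambda^\ast$ as the supremum of the parameters for which \eqref{eq:D} is solvable, to prove that the solvability set is a downward half-line, and then to locate its endpoints. Writing the reaction term as $f(s)=s^n\bigl(s^{m-n}-s^{m-n+1}-\lambda\bigr)$, observe that a nonnegative nontrivial solution forces $f(s)>0$ for some $s>0$, i.e. $\sup_{s>0}g(s)>\lambda$ with $g(s):=s^{m-n}-s^{m-n+1}$. A one–variable maximization (the maximum is at $s=\frac{m-n}{m-n+1}$) gives $\sup_{s>0}g(s)=\frac{(m-n)^{m-n}}{(m-n+1)^{m-n+1}}$, and since $0<m-n<1$ this equals $\frac{1}{m-n+1}\bigl(\frac{m-n}{m-n+1}\bigr)^{m-n}<1$. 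For $\lambda$ larger than this number one has $f\le 0$ on $[0,\infty)$, so any nonnegative solution is subharmonic and, vanishing on $\partial\Omega$, must be identically zero, a contradiction. This gives nonexistence for large $\lambda$ and the bound \eqref{eq:caseIbound}.

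For $\lambda\le 0$ existence is immediate from Theorem \ref{thm:existence} (via \eqref{eq:them:exist3} when $\lambda=0$ and \eqref{eq:them:exist2} when $\lambda<0$), positivity and $\frac{\partial u}{\partial\nu}<0$ come from Lemma \ref{lem:smp}\,\eqref{eq:lem:smp1}, and uniqueness from Theorem \ref{thm:uniq}\,\eqref{eq:them:uniq1}. To obtain the half-line structure I use a monotone sub/supersolution argument: since $f$ is pointwise strictly decreasing in $\lambda$, a solution at $\lambda_0$ is a subsolution of \eqref{eq:D} for every $\lambda<\lambda_0$, while any sufficiently large constant (admissible because solutions are uniformly bounded by Lemma \ref{lem:bound}) is a supersolution lying above it; the method of sub and supersolutions then produces a solution for each $\lambda<\lambda_0$. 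Hence the solvability set is downward closed; setting $\lambda^\ast$ to be its supremum, \eqref{eq:D} is solvable for every $\lambda<\lambda^\ast$ and unsolvable for every $\lambda>\lambda^\ast$, and the previous paragraph yields $\lambda^\ast\le\frac{(m-n)^{m-n}}{(m-n+1)^{m-n+1}}<1$.

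The remaining and, I expect, most delicate point is solvability together with $E_\lambda(u_\lambda)<0$ for small $\lambda>0$: here $n<m$ places this regime outside the scope of Theorem \ref{thm:existence}, so I argue variationally. For $\lambda>0$ and the present (subcritical) exponents, $E_\lambda$ is weakly lower semicontinuous on $X$, and it is coercive because the only negative term $-\frac{1}{m+1}\|u^+\|_{m+1}^{m+1}$ has exponent below that of the positive $L^{m+2}$ term and is absorbed by it through Young's inequality (on a minimizing sequence one may replace $u$ by $u^+$, discarding the negative part). Thus $E_\lambda$ attains a global minimum $u_\lambda$, and the crux is nontriviality, which is equivalent to $\inf_X E_\lambda<0$.

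To see this I examine the fiber $\phi_v$ introduced in Section \ref{section:prelim} for a fixed $v$ (for instance $v=\varphi_1$). At $\lambda=0$ the leading power as $t\to 0^+$ is $m+1<2<m+2$, so $\phi_v(t)<0$ for small $t$, and the minimum $\min_{t>0}\phi_v(t)<0$ is attained at some $t_0>0$. Since $\phi_v(t)$ is affine and increasing in $\lambda$, raising $\lambda$ from $0$ increases the value at this fixed $t_0$ only by $\frac{\lambda t_0^{n+1}}{n+1}\|v^+\|_{n+1}^{n+1}$, so $\phi_v(t_0)$ stays negative for all sufficiently small $\lambda>0$; I define $\lambda_\ast\in(0,\lambda^\ast)$ as the corresponding threshold. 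For such $\lambda$ one has $\inf_X E_\lambda\le E_\lambda(t_0 v)=\phi_v(t_0)<0$, whence the minimizer $u_\lambda$ is nontrivial and thus a solution (classical, by Lemma \ref{lem:reg}) with $E_\lambda(u_\lambda)<0$. This simultaneously establishes $\lambda^\ast\ge\lambda_\ast>0$ and the final assertion of the theorem.
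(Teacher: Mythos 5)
Your proof is correct, and its variational core coincides with the paper's: existence for small $\lambda>0$ comes from global minimization of $E_\lambda$ (coercivity plus weak lower semicontinuity), with nontriviality of the minimizer obtained because the fiber $\phi_v(t)$ is negative for small $t$ at $\lambda=0$ and $E_\lambda(t_0v)$ is affine and increasing in $\lambda$; the case $\lambda\le 0$ is quoted from Theorems \ref{thm:existence} and \ref{thm:uniq} in both arguments. You differ in two places. First, for the bound \eqref{eq:caseIbound} the paper evaluates the equation at an interior maximum point of a solution and then analyzes the family $g_\lambda(s)=s^{m-n}-s^{m+1-n}-\lambda$ (uniqueness of its critical point, monotonicity of $\lambda\mapsto g_\lambda(s_\lambda)$), whereas you note that $\lambda\ge\sup_{s>0}(s^{m-n}-s^{m-n+1})$ forces $f\le 0$ on $[0,\infty)$, so any nonnegative solution is subharmonic and vanishes identically; this is the same one-variable maximization and the same constant, but your phrasing is shorter and avoids the parametric discussion. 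Second, and more substantively, you prove that the solvability set is downward closed via the monotone sub/supersolution argument (a solution at $\lambda_0$ is a subsolution for every $\lambda<\lambda_0$, and a large constant is an ordered supersolution thanks to Lemma \ref{lem:bound}). The paper skips this step: it defines $\lambda^*$ as a supremum and declares the existence part finished, although the theorem asserts solvability for \emph{every} $\lambda<\lambda^*$, which the definition of a supremum alone does not yield. Your addition supplies exactly this missing half-line structure, and likewise your construction of $\lambda_*$ gives $E_\lambda(u_\lambda)<0$ on the whole interval $(0,\lambda_*)$ rather than only along a sequence of parameters. Two minor points, shared with the paper: coercivity of $E_\lambda$ genuinely holds only on the cone of nonnegative functions when $m+2\ge 2^*$ (your parenthetical remark about replacing $u$ by $u^+$, which does not increase $E_\lambda$, is the correct fix), and strictness in $\lambda_*<\lambda^*$ can always be arranged by shrinking $\lambda_*$.
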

\begin{proof}
	The existence, positivity, and uniqueness in the case $\lambda \leq 0$ are given by Theorems \ref{thm:existence} and \ref{thm:uniq}. 
	Let us thus assume that $\lambda > 0$. 
	We will obtain a solution to \eqref{eq:D} by global minimization of the energy functional $E_\lambda$. Recall that $E_\lambda$ is weakly lower-semicontinuous on $X$. Moreover, $E_\lambda$ is coercive on $X$. Indeed, applying the Sobolev embedding theorem to the term $\int_{\Omega} |u^+|^{m+1} \, dx$, one can find $C>0$ (independent of $u$) such that for all $u \in X$, 
	\begin{align*}
	E_\lambda(u) 
	> \frac{1}{2} \int_\Omega |\nabla u|^2 \, dx 
	- C \left(\int_\Omega |\nabla u^+|^2 \, dx\right)^\frac{m+1}{2}
	+
	\frac{1}{m+2} \int_\Omega |u^+|^{m+2} \, dx. 
	\end{align*}
	Thus, since $0<m<1$, we easily see that $E_\lambda(u) \to +\infty$ provided $\|u\|_X \to +\infty$, which is the desired coercivity.
	Let us show now that for any sufficiently small $\lambda > 0$ there exists $u$ such that $E_\lambda(u) < 0$. 
	Let us take any $v \in X \setminus \{0\}$ such that $v \geq 0$ in $\Omega$ and consider the fibers associated with $E_0(v)$:
	$$
	\phi_v(t) = \frac{t^2}{2} \int_\Omega |\nabla v|^2 \, dx
	-
	\frac{t^{m+1}}{m+1} \int_\Omega |v|^{m+1} \, dx 
	+
	\frac{t^{m+2}}{m+2} \int_\Omega |v|^{m+2} \, dx,
	\quad t>0.
	$$
	Recalling that $0<m<1$, we deduce that the term $-\frac{t^{m+1}}{m+1} \int_\Omega |v|^{m+1} \, dx$ is leading as $t \to 0$, and hence $\phi_v(t) < 0$ for all sufficiently small $t > 0$. Let us fix any of such $t>0$. 
	Due to the continuity of $E_\lambda(tv)$ with respect to $\lambda$, we conclude that $E_\lambda(tv)<0$ for all sufficiently small $\lambda>0$.
	Finally, applying the direct minimization procedure (see, e.g., \cite[Chapter I, Theorem 1.2]{struwe}), we obtain the existence of a critical point $u_\lambda \geq 0$ of $E_\lambda$ such that $E_\lambda(u_\lambda)<0$ for all sufficiently small $\lambda>0$. 
	Denoting
	\begin{align*}
	\lambda^* &= \sup\{\lambda>0:~ \eqref{eq:D} ~\text{has a solution}\},\\
	\lambda_* &= \sup\{\lambda>0:~ \eqref{eq:D} ~\text{has a solution $u_\lambda$ such that $E_\lambda(u_\lambda)<0$}\},
	\end{align*}
	we get $\lambda^* \geq \lambda_* > 0$, and the proof of the existence part of the theorem is finished.
	Notice that the same arguments as above provide the existence also in the case $\lambda \leq 0$.	
	
	Now we prove the bound \eqref{eq:caseIbound} for $\lambda^*$. 
	Let $u_\lambda$ be a solution to \eqref{eq:D} and denote $M_\lambda = \max\limits_{x \in \Omega} u_\lambda(x) = u_\lambda(x_\lambda) > 0$ for some $x_\lambda \in \Omega$, that is, $\|u_\lambda\|_\infty = M_\lambda$.
	Then
	\begin{equation}\label{eq:caseI1}
	0 \leq -\Delta u_\lambda(x_\lambda) = M_\lambda^n \left(M_\lambda^{m-n} - M_\lambda^{m+1-n}  - \lambda\right).
	\end{equation}
	Let us investigate the function $g_\lambda(s) = s^{m-n} - s^{m+1-n}  - \lambda$ for $s>0$. 
	Since $0<n<m<1$, we see that $s^{m-n}-\lambda$ is the leading term as $s \to 0$ and $-s^{m+1-n}$ is the leading term as $s \to +\infty$. 
	Moreover, $g''_\lambda(s) < 0$ for all $s>0$.
	Therefore, $g_\lambda(s)$ has exactly one critical point for $s>0$ which is the point of global maximum. Let us denote this point as $s_\lambda$. 
	Performing direct calculations, one can derive that the map $\lambda \mapsto g_\lambda(s_\lambda)$, $\lambda \in (0,+\infty)$, is decreasing. 
	Looking for $\overline{\lambda}$ such that $g_{\overline{\lambda}}(s_{\overline{\lambda}}) = 0$, we get
	$$
	\overline{\lambda} = \frac{(m-n)^{m-n}}{(m-n+1)^{m-n+1}} < 1.
	$$
	Since, $M_\lambda^n g_\lambda(M_\lambda) \geq 0$ by \eqref{eq:caseI1}, we must have $s_\lambda^n g_\lambda(s_\lambda) \geq 0$, which implies $\lambda \leq \overline{\lambda}$, and hence $\lambda^* \leq \overline{\lambda}$. 
\end{proof}

\begin{figure}[ht]
	\centering
	\includegraphics[width=0.6\linewidth]{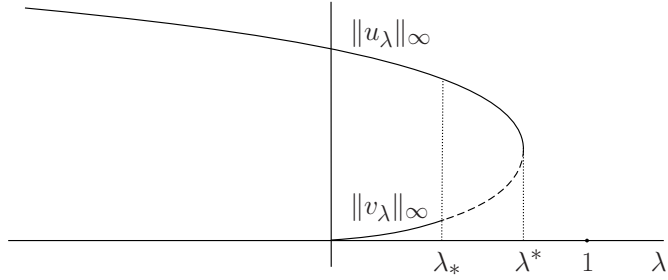}\\
	\caption{The branches of solutions to \eqref{eq:D} in the case $0<n<m<1$.}
	\label{fig:caseI}
\end{figure}

Let us now discuss a multiplicity issue for $\lambda>0$.
Considering the fibers $\phi_v(t)$ associated with $E_\lambda(v)$ for any $v \in X \setminus \{0\}$, we see that $\phi_v(t)$ possess exactly two critical points for all sufficiently small $\lambda>0$, namely, a point of maximum and a point of minimum.
In fact, the solution $u_\lambda$ obtained in Theorem \ref{thm:caseI} corresponds to the point of minimum of $\phi_{u_\lambda}(t)$. 
In view of the presence of the point of maximum of $\phi_v(t)$ it is natural to anticipate the existence of another critical point of $E_\lambda$ which has a mountain pass property. 
Indeed, in the following theorem we show the existence of the second critical point of $E_\lambda$ by obtaining a suitable neighbourhood of $u_\lambda$ with a lower bound for the mountain pass range on its boundary.

\begin{thm}\label{thm:caseIb}
	Let $0<n<m<1$, $m<2^*-2$, and 
	\begin{equation}\label{eq:caseIas}
	n < \frac{(N+2)(2m-1)-(N-2)m^2}{4}.
	\end{equation}
	Let $\lambda_*$ be defined as in Theorem \ref{thm:caseI}.
	Then, for any $\lambda \in (0, \lambda_*)$, the problem \eqref{eq:D} possesses a solution $v_\lambda$ such that $E_\lambda(v_\lambda)>0$.
\end{thm}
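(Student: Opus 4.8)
The plan is to obtain $v_\lambda$ as a mountain-pass critical point of $E_\lambda$ sitting strictly above both the global minimiser $u_\lambda$ of Theorem~\ref{thm:caseI} and the trivial critical point $0$. I fix $\lambda\in(0,\lambda_*)$, so that $E_\lambda(u_\lambda)<0=E_\lambda(0)$, and work in $X=W_0^{1,2}(\Omega)$, which here is genuinely $W_0^{1,2}(\Omega)$ because $m<2^*-2$ forces $\max\{m+2,n+1\}=m+2<2^*$. The compactness is supplied for free: with $0<n<m<1$ and $m<2^*-2$ we fall into case~\eqref{eq:lem:PS1} of Lemma~\ref{lem:PS} (indeed $\lambda>0$, $m<1$, $n<m+1$, and $n<1<2^*-1$), so $E_\lambda$ satisfies the Palais--Smale condition on $X$. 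It then remains only to exhibit the mountain-pass geometry and to certify that the resulting minimax level is positive.

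First I would build the geometry through a \emph{separating surface} rather than a metric sphere around $0$, since the sublinear term makes such spheres delicate. Put $R=\|u_\lambda\|_{n+1}>0$ and, for $r\in(0,R)$, consider $\Sigma_r=\{u\in X:\ \|u^+\|_{n+1}=r\}$. As $u\mapsto\|u^+\|_{n+1}$ is continuous on $X$, vanishes at $0$, and equals $R$ at $u_\lambda$, every continuous path from $u_\lambda$ to $0$ meets $\Sigma_r$. The key step, and the main obstacle, is to prove $\inf_{\Sigma_r}E_\lambda>0$ for $r$ small. On $\Sigma_r$ the term $\tfrac{\lambda}{n+1}\|u^+\|_{n+1}^{n+1}=\tfrac{\lambda}{n+1}r^{n+1}$ is a fixed positive constant, so, discarding the nonnegative $(m+2)$-term, $E_\lambda(u)\ge\tfrac12\|\nabla u\|_2^2-\tfrac{1}{m+1}\|u^+\|_{m+1}^{m+1}+\tfrac{\lambda}{n+1}r^{n+1}$. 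The trouble is the non-Lipschitz sublinear term $\|u^+\|_{m+1}^{m+1}$; I would tame it by Gagliardo--Nirenberg interpolation between $L^{n+1}$ and $L^{2^*}$ (legitimate since $n+1<m+1<2^*$), giving $\|u^+\|_{m+1}^{m+1}\le C\,r^{(1-a)(m+1)}\|\nabla u\|_2^{a(m+1)}$ with $a\in(0,1)$ and $a(m+1)<2$. Minimising the resulting lower bound over $\|\nabla u\|_2\ge0$ leaves a bound of the shape $\tfrac{\lambda}{n+1}r^{n+1}-C'r^{\kappa}$ with $\kappa=\tfrac{2(1-a)(m+1)}{2-a(m+1)}$, and a short computation shows $\kappa>n+1$ (the inequality reduces to $m>n$). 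Hence the constant term wins and $\alpha:=\inf_{\Sigma_r}E_\lambda>0$ for all sufficiently small $r$.

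With this, the rest is routine. Let $\Gamma$ be the continuous paths in $X$ from $u_\lambda$ to $0$ and set $c_\lambda=\inf_{\gamma\in\Gamma}\max_{s}E_\lambda(\gamma(s))$. The straight segment $s\mapsto(1-s)u_\lambda$ gives $c_\lambda\le\max_{t\in[0,1]}\phi_{u_\lambda}(t)<\infty$, while the separation property yields $c_\lambda\ge\alpha>0$; since $E_\lambda(0)=0$ and $E_\lambda(u_\lambda)<0$, the level $c_\lambda$ strictly exceeds both endpoint values. By the mountain-pass theorem together with the Palais--Smale property, $c_\lambda$ is a critical value, attained at some $v_\lambda\in X$ with $E_\lambda(v_\lambda)=c_\lambda>0$. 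Because $E_\lambda(v_\lambda)>0>E_\lambda(u_\lambda)$ and $E_\lambda(v_\lambda)\neq0=E_\lambda(0)$, the point $v_\lambda$ differs from both $u_\lambda$ and $0$; being a nonzero critical point it is a weak solution of \eqref{eq:D}, hence nonnegative and, by Lemma~\ref{lem:reg}, classical.

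Finally, a word on the hypothesis \eqref{eq:caseIas}. In the argument above it plays no role beyond the subcriticality already encoded in $m<2^*-2$, precisely because the surface $\Sigma_r$ pins down the $L^{n+1}$-mass and thereby supplies a guaranteed positive constant. I expect \eqref{eq:caseIas} to become necessary only if one insists on a metric sphere around $u_\lambda$ (or around $0$), where that mass is no longer controlled from below and one must instead balance all three positive contributions against the sublinear term; the exponent bookkeeping of that balance is, I suspect, exactly what forces \eqref{eq:caseIas}, in line with the authors' own remark that several of the stated assumptions are technical.
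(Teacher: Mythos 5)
Your proof is correct, and it takes a genuinely different---and in fact stronger---route than the paper's. Both arguments are mountain-pass constructions over paths joining $u_\lambda$ to $0$, with compactness supplied by case \eqref{eq:lem:PS1} of Lemma \ref{lem:PS}, and both rest on the same analytic tool (your ``interpolation between $L^{n+1}$ and $L^{2^*}$ plus Sobolev'' is exactly the Gagliardo--Nirenberg inequality \eqref{eq:GN} the authors use). The difference is the separating set. The paper separates $u_\lambda$ from $0$ by the boundary of the set $F_\rho$, on which the gradient norm is pinned at $\rho$; the positive term $\frac{\lambda}{n+1}\int_\Omega |u^+|^{n+1}\,dx$ must then be estimated \emph{from below} through the chain \eqref{eq:um+1}--\eqref{eq:un+1}, and it is precisely the requirement that the resulting exponent beat $m+1$ in \eqref{eq:Elowerbound1} that forces hypothesis \eqref{eq:caseIas}. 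You instead pin the $L^{n+1}$-mass itself, $\Sigma_r=\{u:\|u^+\|_{n+1}=r\}$, so the $\lambda$-term is an exact positive constant and only the sublinear term needs estimating; optimizing out the free gradient variable leaves a deficit $O(r^\kappa)$, and your claim $\kappa>n+1$ checks out: with $a(m+1)=(m-n)2^*/\left(2^*-(n+1)\right)$, the inequality $\kappa>n+1$ reduces to $(2^*-2)(m-n)>0$, which holds whenever $n<m$ and $2^*>2$. Consequently your argument never uses \eqref{eq:caseIas}: it proves the theorem under the weaker hypotheses $0<n<m<1$, $m<2^*-2$ alone, thereby substantiating (for this theorem) the authors' own conjecture that \eqref{eq:caseIas} is technical. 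Two small caveats. First, for $N=2$, where the paper's convention sets $2^*=+\infty$ but $W_0^{1,2}(\Omega)\not\hookrightarrow L^\infty(\Omega)$, you should interpolate against a finite exponent $q>2$ (or invoke Gagliardo--Nirenberg directly, as the paper does); the computation is unchanged, since only $q>2$ is needed. Second, passing from ``every path crosses $\Sigma_r$'' and $\inf_{\Sigma_r}E_\lambda>0$ to the conclusion that $c_\lambda$ is a critical value requires the minimax/deformation form of the mountain pass theorem rather than the textbook sphere version, but this is standard and is exactly what the paper itself uses with $\partial F_\rho$.
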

\begin{proof}
	Along the proof we will denote by $C>0$ various constants which do not depend on $u \in W_0^{1,2}(\Omega)$. 
	Let us fix $\lambda \in (0, \lambda_*)$. By Theorem \ref{thm:caseI}, there exists a critical point $u_\lambda$ of $E_\lambda$ such that $E_\lambda(u_\lambda)<0$.
	Let us define
	$$
	c_\lambda = \inf_{g \in \Gamma} \max_{0 \leq t \leq 1} E_\lambda(g(t)),
	$$
	where $\Gamma$ is the set of all continuous paths $g:[0,1] \to W_0^{1,2}(\Omega)$ satisfying $g(0)=u_\lambda$ and $g(1)=0$. 
	Clearly, $c_\lambda \geq 0$ since $E_\lambda(g(1)) = 0$.
	Let us show that $c_\lambda > 0$.
	To this end, consider the set
	$$
	F_\rho 
	= 
	\left\{u \in W_0^{1,2}(\Omega):~ 
	\frac{1}{m+1} \int_\Omega |u^+|^{m+1} \, dx 
	-
	\frac{1}{m+2} \int_\Omega |u^+|^{m+2} \, dx 
	> 
	\frac{1}{2} \int_\Omega |\nabla u|^2 \, dx 	> \frac{\rho^2}{2}\right\}
	$$
	for some sufficiently small $\rho \in (0,\|\nabla u_\lambda\|_2)$. 
	This set is bounded in $W_0^{1,2}(\Omega)$ in view of the Sobolev embedding theorem since $m<2^*-2$. 	
	Recalling that $E_\lambda(u_\lambda)<0$ and $\lambda>0$, we get $u_\lambda \in F_\rho$. 
	Moreover, the closure of $F_\rho$ does not contain $0$. 
	Let us prove that there exists $\alpha>0$ such that $E_\lambda(u)>\alpha$ for any $u \in \partial F_\rho$. 
	Fixing any $u \in \partial F_\rho$, we have either 
	\begin{equation}\label{eq:Fboundary1}
	\frac{1}{m+1}\int_\Omega |u^+|^{m+1} \, dx 
	-
	\frac{1}{m+2} \int_\Omega |u^+|^{m+2} \, dx 
	= 
	\frac{1}{2} \int_\Omega |\nabla u|^2 \, dx \geq \frac{\rho^2}{2}
	\end{equation}
	or
	\begin{equation}\label{eq:Fboundary2}
	\frac{1}{m+1}\int_\Omega |u^+|^{m+1} \, dx 
	-
	\frac{1}{m+2} \int_\Omega |u^+|^{m+2} \, dx 
	\geq 
	\frac{1}{2} \int_\Omega |\nabla u|^2 \, dx = \frac{\rho^2}{2}.
	\end{equation}
	Evidently, in both cases,
	\begin{equation}\label{eq:um+1}
	\int_\Omega |u^+|^{m+1} \, dx > \frac{(m+1)\rho^2}{2}.
	\end{equation}
	Let us also estimate $\int_\Omega |u^+|^{n+1} \, dx$ from below.
	The Gagliardo-Nirenberg inequality (see, e.g, \cite{nirenberg}) gives
	\begin{align}
	\notag
	\left(\int_\Omega |u^+|^{m+1} \, dx\right)^\frac{1}{m+1} 
	&\leq
	C
	\left(\int_\Omega |u^+|^{n+1} \, dx\right)^\frac{\theta}{n+1}
	\left(\int_\Omega |\nabla u^+|^{2} \, dx\right)^\frac{1-\theta}{2}
	\\
	\label{eq:GN}
	&\leq
	C
	\left(\int_\Omega |u^+|^{n+1} \, dx\right)^\frac{\theta}{n+1} \rho^{1-\theta}
	,
	\end{align}
	where $\theta \in (0,1)$ satisfies
	$$
	\frac{1}{m+1} = (1-\theta)\left(\frac{1}{2}-\frac{1}{N}\right) + \frac{\theta}{n+1}.
	$$
	Therefore, using \eqref{eq:um+1}, we get from \eqref{eq:GN} that
	\begin{equation}\label{eq:un+1}
	\int_\Omega |u^+|^{n+1} \, dx \geq C \rho^{\frac{n+1}{\theta}\left(\frac{2}{m+1}-1+\theta\right)}.
	\end{equation}	
	Assume first that \eqref{eq:Fboundary1} is satisfied. 
	Then \eqref{eq:un+1} yields
	\begin{equation}\label{eq:I>01}
	E_\lambda(u) 
	=
	\frac{\lambda}{n+1} \int_\Omega |u^+|^{n+1} \, dx
	\geq 	
	C \rho^{\frac{n+1}{\theta}\left(\frac{2}{m+1}-1+\theta\right)}.
	\end{equation}
	Assume now that \eqref{eq:Fboundary2} is satisfied. 
	Applying the Sobolev embedding theorem, we get
	\begin{equation}\label{eq:um+12}
	\int_\Omega |u^+|^{m+1} \, dx \leq C \left(\int_\Omega |\nabla u^+|^2\right)^\frac{m+1}{2} \leq C \rho^{m+1}.
	\end{equation}
	Thus, using \eqref{eq:un+1} and \eqref{eq:um+12}, we estimate $E_\lambda(u)$ from below as follows:
	\begin{equation}\label{eq:Elowerbound1}
	E_\lambda(u) \geq \frac{\rho^2}{2} - C\rho^{m+1} + C \rho^{\frac{n+1}{\theta}\left(\frac{2}{m+1}-1+\theta\right)}.
	\end{equation}
	Notice that the assumption \eqref{eq:caseIas} is, in fact, equivalent to the inequality
	$$
	m+1 > \frac{n+1}{\theta}\left(\frac{2}{m+1}-1+\theta\right).
	$$
	Therefore, if follows from \eqref{eq:Elowerbound1} that there is $C>0$, which depends on $\rho$ but does not depend on $u$ satisfying \eqref{eq:Fboundary2}, such that $E_\lambda(u) > C>0$ for any sufficiently small $\rho>0$. 
	Combining this fact with the estimate \eqref{eq:I>01}, we conclude that for any sufficiently small $\rho>0$ there exists $\alpha>0$ such that $E_\lambda(u)>\alpha$ for any $u \in \partial F_\rho$. 
	Since any continuous path in $W_0^{1,2}(\Omega)$ joining $u_\lambda$ and $0$  crosses $\partial F_\rho$, we conclude that $c_\lambda > 0$.
	 
	At the same time, $E_\lambda$ satisfies the Palais-Smale condition on $W_0^{1,2}(\Omega)$ by Lemma \ref{lem:PS}. 
	Therefore, the mountain pass theorem (see, e.g., \cite{rabinowitz}) provides us with the critical point $v_\lambda$ of $E_\lambda$ such that $c_\lambda = E_\lambda(v_\lambda)>0$.
\end{proof}

\begin{remark}
	One can show that the set of exponents $n, m$ admissible in Theorem \ref{thm:caseIb} is nonempty for $N \in [1,8]$ and empty for $N \geq 9$. 
	Moreover, the function on the right-hand side of \eqref{eq:caseIas} is (strictly) increasing with respect to $m \in (0, 1)$.
\end{remark}

\begin{remark}\label{rem:caseI:compact}
	The results of \textsc{Franchi, Lanconelli, \& Serrin} \cite{FLS} or \textsc{Gazzola, Serrin, \& Tang} \cite{GST} can be applied to show that for any sufficiently small $\lambda>0$ there exists an appropriate $\Omega$ such that \eqref{eq:D} possesses a radial compact support solution in $\Omega$. See also \textsc{Leach \& Needham} \cite[Section 8.4.4]{needham} for a similar statement in the one-dimensional case.
\end{remark}

\begin{conjecture}
	It is natural to expect that the branches of solutions to \eqref{eq:D} behave as depicted on Figure \ref{fig:caseI}. 
	That is, the problem \eqref{eq:D} has at least two solutions for all $\lambda \in (0,\lambda^*)$ and at least one solution for $\lambda=\lambda^*$. 
	Moreover, we anticipate that the assumption \eqref{eq:caseIas} is technical and can be omitted. 
	Also, we do not know whether solutions to \eqref{eq:D} obtained in Theorems \ref{thm:caseI} and \ref{thm:caseIb} are positive or of compact support type, provided $\lambda>0$.
\end{conjecture}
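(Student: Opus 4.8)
The statement bundles three tasks of different nature: extending the second solution from $(0,\lambda_*)$ to the full interval $(0,\lambda^*)$, deleting the assumption \eqref{eq:caseIas}, and producing a solution at the endpoint $\lambda=\lambda^*$. The plan is to treat the endpoint by a compactness/limiting argument and to obtain the second solution on all of $(0,\lambda^*)$ by a double-well variational scheme in which the Gagliardo--Nirenberg barrier estimate of Theorem \ref{thm:caseIb}---the only place where \eqref{eq:caseIas} is used---is replaced by the fact that the \emph{trivial} state is itself a strict local minimiser. This is what makes the removal of \eqref{eq:caseIas} plausible: that condition is an artefact of controlling the mountain-pass level through $\partial F_\rho$, and it should disappear once the geometry is read off from two local minima instead.

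For the second solution, first I would record that, writing $F(s)=\frac{s^{m+1}}{m+1}-\frac{s^{m+2}}{m+2}-\frac{\lambda s^{n+1}}{n+1}$ so that $E_\lambda(u)=\frac12\int_\Omega|\nabla u|^2\,dx-\int_\Omega F(u^+)\,dx$, the inequalities $n+1<m+1$ and $\lambda>0$ give $F(s)\le 0$ on some interval $[0,s_0(\lambda)]$; hence $E_\lambda(u)\ge\frac12\int_\Omega|\nabla u|^2\,dx>0$ for every $u\not\equiv 0$ with $\|u\|_\infty\le s_0(\lambda)$, so $0$ is a strict local minimiser of $E_\lambda$ in the $C^1$ topology. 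Since $m<2^*-2$ the nonlinearity is subcritical, and a Brezis--Nirenberg type argument upgrades this to a strict local minimum in $W_0^{1,2}(\Omega)$. On the other hand, for each $\lambda\in(0,\lambda^*)$ the sub- and supersolution method yields a positive solution $u_\lambda\ge c\varphi_1$ (taking a solution at a slightly larger parameter as a subsolution and the constant $1$ from Lemma \ref{lem:bound} as a supersolution); I would argue that this order-interval minimiser is, after the same $C^1$-to-$W_0^{1,2}$ upgrade, a second strict local minimiser of $E_\lambda$, distinct from $0$. With two strict local minima at hand, the functional coercive on $X$ and satisfying the Palais--Smale condition by Lemma \ref{lem:PS}, the two-local-minima version of the mountain pass theorem produces a critical point $v_\lambda$ at the level $c_\lambda\ge\max\{E_\lambda(0),E_\lambda(u_\lambda)\}$; provided this inequality is strict, $v_\lambda$ is distinct from both minima and is therefore a second, nontrivial solution of \eqref{eq:D}. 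For $\lambda<\lambda_*$ this recovers Theorem \ref{thm:caseIb} without \eqref{eq:caseIas}, while for $\lambda\in[\lambda_*,\lambda^*)$ it reaches the previously inaccessible range.

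For the endpoint, I would take $\lambda_k\uparrow\lambda^*$ with solutions $u_k:=u_{\lambda_k}$. By Lemma \ref{lem:bound} one has $0\le u_k\le 1$ uniformly, so the right-hand side of \eqref{eq:D} is uniformly bounded and the regularity of Lemma \ref{lem:reg} yields a uniform $C^{1,\alpha}(\overline{\Omega})$ bound; along a subsequence $u_k\to u^*$ in $C^1(\overline{\Omega})$, and $u^*$ solves \eqref{eq:D} at $\lambda^*$. The decisive point is that $u^*\not\equiv 0$: evaluating the equation at a maximum point as in \eqref{eq:caseI1} gives $g_{\lambda_k}(\|u_k\|_\infty)\ge 0$ for $g_\lambda(s)=s^{m-n}-s^{m+1-n}-\lambda$, and since $\lambda_k\ge\lambda^*/2>0$ eventually, the smaller root $s_-(\lambda_k)$ of $g_{\lambda_k}$ is bounded below by a positive constant $\delta$; thus $\|u_k\|_\infty\ge\delta$ and $\|u^*\|_\infty\ge\delta>0$.

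The hard part is the strictness and isolation of $u_\lambda$ as a local minimum for every $\lambda\in(0,\lambda^*)$. For $\lambda<\lambda_*$ this is essentially free, since $u_\lambda$ may be taken as the global minimiser with $E_\lambda(u_\lambda)<0$; but for $\lambda$ close to $\lambda^*$ the two solutions are expected to coalesce at a fold, the barrier separating the wells collapses, and in the absence of a uniqueness or nondegeneracy theory it is delicate to guarantee $c_\lambda>E_\lambda(u_\lambda)$ and hence $v_\lambda\ne u_\lambda$. This degeneracy is moreover entangled with the last, frankly open, issue of the conjecture: since $n<\min\{m,1\}$, Remark \ref{rem:smp} does \emph{not} rule out compact support solutions, so $u_\lambda$ may vanish on a set of positive measure, where the non-Lipschitz term $u^m$ makes the natural linearisation singular and classical bifurcation tools inapplicable. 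Deciding whether the solutions are positive or of compact support type---and thereby controlling the fold---seems to require a free-boundary analysis near the zero set that is beyond the methods used here.
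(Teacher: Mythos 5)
First, note that the statement you were asked to prove is a \emph{conjecture}: the paper offers no proof of it, and indeed explicitly leaves open the multiplicity on all of $(0,\lambda^*)$, the removal of \eqref{eq:caseIas}, and the positivity-versus-compact-support question. So there is no proof in the paper to compare against; what can be assessed is whether your programme actually settles any part of the conjecture. One part it does, in my view, essentially settle: the existence of a solution at $\lambda=\lambda^*$. Your limiting argument is complete and correct as written --- Lemma \ref{lem:bound} gives $0\le u_k\le 1$, the right-hand side is then uniformly bounded, Lemma \ref{lem:reg} (with $\lambda_k>0$ and $m<1\le 2^*-1$) and the $W^{2,p}$ estimates give a uniform $C^{1,\alpha}(\overline{\Omega})$ bound, and the interior maximum-point evaluation $g_{\lambda_k}(\|u_k\|_\infty)\ge 0$ forces $\|u_k\|_\infty\ge \lambda_k^{1/(m-n)}\ge\delta>0$, so the $C^1$-limit $u^*$ is a nontrivial nonnegative solution at $\lambda^*$. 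This is a clean, standard argument that goes beyond what the paper states, and it genuinely closes the second sentence of the conjecture (modulo the regularity hypotheses you implicitly assume, e.g.\ $m<2^*-2$ where you invoke subcriticality).

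The multiplicity part, however, is not proved, and you yourself identify exactly where it breaks. Two steps are unverified and are precisely the obstructions that make this a conjecture rather than a theorem. (i) You need the order-interval minimizer $u_\lambda$ to be a \emph{strict} local minimizer of $E_\lambda$ distinct from $0$ for all $\lambda\in(0,\lambda^*)$. The usual route --- minimizer interior to the order interval, then the Brezis--Nirenberg $C^1$-to-$W_0^{1,2}$ upgrade --- requires a strong comparison principle between $u_\lambda$ and the subsolution $u_{\lambda'}$, and for the non-Lipschitz nonlinearity here ($n<m<1$, $\lambda>0$, so $f$ is not one-sided Lipschitz at $0$ and Remark \ref{rem:smp} does not exclude compact-support solutions) strong comparison can fail: the minimizer may touch the subsolution or vanish on a set of positive measure, in which case it need not be a local minimizer in any topology. (ii) Even granting two local minima, the Pucci--Serrin/two-wells mountain pass yields a critical point distinct from both only under a strictness or isolation hypothesis, i.e.\ $c_\lambda>\max\{E_\lambda(0),E_\lambda(u_\lambda)\}$, and near the expected fold at $\lambda^*$ the two solutions should coalesce, so this separation cannot be assumed --- note that the paper's own proof of Theorem \ref{thm:caseIb} obtains the separation quantitatively via the Gagliardo--Nirenberg estimate on $\partial F_\rho$, and \eqref{eq:caseIas} is the price of that estimate; your proposal removes \eqref{eq:caseIas} only conditionally on (i) and (ii), which remain open. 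The final question (positive versus compact support) you correctly leave untouched. In summary: your endpoint argument is a correct new contribution; the rest is a plausible research outline whose two key lemmas are exactly the open content of the conjecture, so the statement as a whole remains unproved.
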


\section{Case \ref{II}}\label{sec:II}
In this section, we consider the case $0<m<1$ and $m \leq n \leq 1$ which we divide into two subcases.
\subsection{Subcase $0<m=n<1$}
\begin{thm}\label{thm:caseIIa}
	Let $0 < m = n < 1$. Then there exists a solution $u_\lambda$ to \eqref{eq:D} if and only if $\lambda < 1$. Moreover, $u_\lambda$ is positive in $\Omega$ and unique, and the map $\lambda \mapsto u_\lambda$ is a (pointwise) decreasing curve which is smooth in the space $C^1_0(\overline{\Omega})$ and it is of the linearly asymptotically stable type.
\end{thm}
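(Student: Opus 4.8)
The plan is to split the statement into its four assertions --- the existence threshold at $\lambda=1$, uniqueness together with positivity, strict monotonicity of the branch, and its smoothness together with linear stability --- and throughout to use that for $n=m$ the nonlinearity collapses to $f(s)=(1-\lambda)s^m-s^{m+1}$, so the equation in \eqref{eq:D} reads $-\Delta u=(1-\lambda)u^m-u^{m+1}$. Existence, positivity, and uniqueness for every $\lambda<1$ are already available and I would simply assemble them: existence from Theorem \ref{thm:existence} (assumptions \eqref{eq:them:exist5}, \eqref{eq:them:exist3}, \eqref{eq:them:exist1}), positivity from Lemma \ref{lem:smp} (cases \eqref{eq:lem:smp4} and \eqref{eq:lem:smp1}), and uniqueness from Theorem \ref{thm:uniq} (cases \eqref{eq:them:uniq4} and \eqref{eq:them:uniq1}). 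For nonexistence when $\lambda\geq 1$, I would note that then $(1-\lambda)u^m-u^{m+1}\leq -u^{m+1}\leq 0$ for $u\geq 0$, so any solution (classical by Lemma \ref{lem:reg}) is subharmonic with zero boundary data; the maximum principle forces $u\leq 0$, hence $u\equiv 0$, contradicting $u\not\equiv 0$. Equivalently, $\langle E_\lambda'(u),u\rangle=0$ reads $\int_\Omega|\nabla u|^2\,dx+(\lambda-1)\int_\Omega|u^+|^{m+1}\,dx+\int_\Omega|u^+|^{m+2}\,dx=0$, a sum of nonnegative terms, again forcing $u\equiv 0$.

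The stability and the monotonicity I would route through the linearisation $L_\lambda w:=-\Delta w-f'(u_\lambda)w$, with $f'(s)=(1-\lambda)ms^{m-1}-(m+1)s^m$. The decisive point is that its principal eigenvalue $\mu_1(\lambda)$ is strictly positive. I would prove this by the Brezis--Oswald device: writing $\psi>0$ for the principal eigenfunction, multiplying $-\Delta u_\lambda=f(u_\lambda)$ by $\psi$ and $-\Delta\psi-f'(u_\lambda)\psi=\mu_1\psi$ by $u_\lambda$, and subtracting, one gets
$$
\mu_1(\lambda)\int_\Omega \psi\,u_\lambda\,dx=\int_\Omega\bigl(f(u_\lambda)-u_\lambda f'(u_\lambda)\bigr)\psi\,dx,
$$
whose integrand is positive since $f(s)-sf'(s)=(1-\lambda)(1-m)s^m+ms^{m+1}>0$ for $s>0$ --- precisely the inequality \eqref{eq:fss>0} underlying Theorem \ref{thm:uniq}. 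Because $u_\lambda$ vanishes linearly at $\partial\Omega$ (Lemma \ref{lem:smp} gives $\partial u_\lambda/\partial\nu<0$), the potential carries a singular factor $u_\lambda^{m-1}$ that is integrable and, via a Hardy inequality, dominated by $\int_\Omega|\nabla w|^2\,dx$; this makes the eigenvalue problem well posed and guarantees a positive ground state $\psi$, so the identity yields $\mu_1(\lambda)>0$, i.e. $u_\lambda$ is of linearly asymptotically stable type.

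Strict monotonicity then follows either by comparison or, once smoothness is in place, by differentiation. For the comparison route, given $\lambda_1<\lambda_2<1$ the solution $u_{\lambda_1}$ is a supersolution of the $\lambda_2$-problem (since $1-\lambda_1>1-\lambda_2$ and $u_{\lambda_1}^m\geq 0$), while $c\varphi_1$ is a subsolution with $c\varphi_1\leq u_{\lambda_1}$ for small $c$; the sub/supersolution method together with uniqueness forces the resulting solution to be $u_{\lambda_2}$ and hence $u_{\lambda_2}\leq u_{\lambda_1}$. For the differentiation route, one has $L_\lambda(\partial_\lambda u_\lambda)=-u_\lambda^m<0$, and since $\mu_1(\lambda)>0$ makes $L_\lambda^{-1}$ positivity preserving, $\partial_\lambda u_\lambda<0$, giving the strict pointwise decrease.

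Finally, for smoothness of $\lambda\mapsto u_\lambda$ in $C^1_0(\overline\Omega)$ I would first deduce continuity of the branch from uniqueness, the a priori $L^\infty$-bound of Lemma \ref{lem:bound}, and compactness, and then upgrade to $C^1$ by the implicit function theorem applied to $G(\lambda,u)=u-(-\Delta)^{-1}\bigl[(1-\lambda)u^m-u^{m+1}\bigr]$, whose partial differential $D_uG=I-(-\Delta)^{-1}f'(u_\lambda)$ is invertible exactly because $0$ is not an eigenvalue of $L_\lambda$, by $\mu_1(\lambda)>0$. I expect the \emph{genuine obstacle} to sit here: the Nemytskii map $u\mapsto u^m$ is not Fréchet differentiable at functions vanishing on $\partial\Omega$, since arbitrary perturbations can drive $u+w$ negative near the boundary where $u^{m-1}$ is singular. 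The remedy I would pursue is to work not in all of $C^1_0(\overline\Omega)$ but in the order interval, i.e. the cone of functions comparable to $\varphi_1$, in which the monotone ordering of the $u_\lambda$ keeps every element $\sim d(\cdot,\partial\Omega)$; there the singular factor $u^{m-1}$ is compensated (differences are $O(d^m)$) and the Hardy-type estimates render the relevant operator differentiable into a suitable weighted space, so that the inverse-function (equivalently, the difference-quotient) argument closes and bootstrapping delivers the claimed $C^1_0(\overline\Omega)$-smoothness.
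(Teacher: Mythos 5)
Your proposal is correct and, for the bulk of the statement, follows the paper's own route: existence, positivity and uniqueness for $\lambda<1$ are assembled from Theorems \ref{thm:existence} and \ref{thm:uniq} and Lemma \ref{lem:smp} exactly as the paper does, and the pointwise decrease of the branch is obtained, as in the paper, by combining uniqueness with the sub/supersolution comparison (for $\lambda'<\lambda''<1$ the solution $u_{\lambda'}$ is a supersolution of the problem at $\lambda''$). For nonexistence when $\lambda\geq 1$ the paper evaluates the equation at an interior maximum point: $0\leq-\Delta u(x_0)=M^m-M^{m+1}-\lambda M^m\leq 0$ forces $M=0$; your subharmonicity and energy-identity arguments are equivalent to this (the energy version even works directly for weak solutions, without invoking Lemma \ref{lem:reg}). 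The genuine divergence is the smoothness and linear asymptotic stability of the branch: the paper disposes of this in one line by citing \cite[Corollary 3.2]{HMV2}, whereas you attempt a self-contained proof. Your reconstruction --- positivity of the principal eigenvalue of $-\Delta-f'(u_\lambda)$ via the identity $\mu_1(\lambda)\int_\Omega\psi u_\lambda\,dx=\int_\Omega\bigl(f(u_\lambda)-u_\lambda f'(u_\lambda)\bigr)\psi\,dx$ with $f(s)-sf'(s)=(1-\lambda)(1-m)s^m+ms^{m+1}>0$, Hardy-type control of the singular potential $u_\lambda^{m-1}\sim d(x)^{m-1}$ (which is indeed Hardy-subcritical since $1-m<2$), and an implicit-function argument in weighted spaces adapted to the cone of functions comparable to $\varphi_1$ --- is in substance precisely the machinery that \cite{HMV2} develops, and you correctly identify the real obstruction, namely that $u\mapsto u^m$ is not Fr\'echet differentiable in $C^1_0(\overline{\Omega})$ at functions vanishing on $\partial\Omega$. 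What the citation buys the paper is that this nontrivial functional-analytic work (well-posedness of the singular eigenvalue problem, differentiability of the singular Nemytskii map between appropriate weighted spaces, invertibility of the linearization) need not be redone; what your route would buy is self-containedness, but as written it remains a sketch at exactly those technical points. To close it you should either carry out the weighted-space estimates in full or, more economically, quote \cite[Corollary 3.2]{HMV2} as the paper does.
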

\begin{proof}
	The existence, positivity, and uniqueness of $u_\lambda$ when $\lambda < 1$ are given by Theorems \ref{thm:existence} and \ref{thm:uniq}. The fact that the branch of solutions $\lambda \mapsto u_\lambda$ is decreasing follows from the uniqueness and the method of sub and supersolutions. The smoothness of this branch follows from \cite[Corollary 3.2]{HMV2} giving the linearly asymptotically stable character of solutions as well.
	
	Let $\lambda \geq 1$. Suppose, by contradiction, that \eqref{eq:D} possesses a solution $u$. Arguing as in the proof of Lemma \ref{lem:bound}, we define $M = \max\limits_{x \in \Omega} u(x) = u(x_0) > 0$ for some $x_0 \in \Omega$, and get
	$$
	0 \leq -\Delta u(x_0) = M^m - M^{m+1}  - \lambda M^m \leq 0,
	$$
	which implies that $M=0$, a contradiction.
\end{proof}

The branch of solutions to \eqref{eq:D} obtained in Theorem \ref{thm:caseIIa} is depicted on Figure \ref{fig:caseIIa}. 

\begin{figure}[ht]
	\centering
	\includegraphics[width=0.6\linewidth]{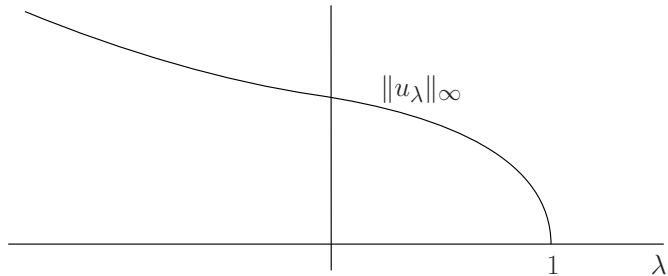}\\
	\caption{The branch of solutions to \eqref{eq:D} in the case $0<m=n<1$.}
	\label{fig:caseIIa}
\end{figure}

\subsection{Subcase $0<m<n\leq 1$}

\begin{thm}\label{thm:caseIIb}
	Let $0<m<n\leq 1$ and $\lambda \in \mathbb{R}$. Then there exists a unique positive solution $u_\lambda$ to \eqref{eq:D}. Moreover, $\lambda \mapsto u_\lambda$ is a (pointwise) decreasing curve which is smooth in the space $C^1_0(\overline{\Omega})$ and it is of the linearly asymptotically stable type.
\end{thm}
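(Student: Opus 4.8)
The plan is to mirror the structure of the proof of Theorem \ref{thm:caseIIa}, since existence, uniqueness, and positivity are already available from the general results of Section \ref{section:prelim}. First I would check that for every $\lambda \in \mathbb{R}$ the hypotheses of Theorem \ref{thm:existence} are met: when $\lambda < 0$ we have $n \leq 1 < m+1$, so \eqref{eq:them:exist2} applies; when $\lambda = 0$ we use \eqref{eq:them:exist3}; and when $\lambda > 0$ we have $n > m$, so \eqref{eq:them:exist4} applies. This yields a positive solution for each $\lambda$. Uniqueness then follows from Theorem \ref{thm:uniq}, invoking \eqref{eq:them:uniq1} for $\lambda \leq 0$ and \eqref{eq:them:uniq3} for $\lambda > 0$, while strict positivity together with $\frac{\partial u_\lambda}{\partial \nu} < 0$ on $\partial\Omega$ is supplied by Lemma \ref{lem:smp} via \eqref{eq:lem:smp1} and \eqref{eq:lem:smp3}.

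Next I would establish monotonicity of the branch. Fixing $\lambda_1 < \lambda_2$, the idea is that a larger parameter produces a pointwise smaller nonlinearity: since $u_{\lambda_2} > 0$ in $\Omega$, one has
$$
-\Delta u_{\lambda_2} = (1 - u_{\lambda_2}) u_{\lambda_2}^m - \lambda_2 u_{\lambda_2}^n \leq (1 - u_{\lambda_2}) u_{\lambda_2}^m - \lambda_1 u_{\lambda_2}^n \quad \text{in}~ \Omega,
$$
so $u_{\lambda_2}$ is a subsolution of the $\lambda_1$-problem. Taking a sufficiently large constant as a supersolution (which dominates $u_{\lambda_2}$ by Lemma \ref{lem:bound}), the sub and supersolution method yields a solution of the $\lambda_1$-problem lying above $u_{\lambda_2}$; uniqueness identifies it with $u_{\lambda_1}$, giving $u_{\lambda_1} \geq u_{\lambda_2}$ and hence the (pointwise) decreasing character of $\lambda \mapsto u_\lambda$.

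For the smoothness in $C^1_0(\overline{\Omega})$ and the linearly asymptotically stable type, I would appeal to \cite[Corollary 3.2]{HMV2}, exactly as in Theorem \ref{thm:caseIIa}. The crucial input is that $s \mapsto f_\lambda(s)/s$ is strictly decreasing, equivalently that inequality \eqref{eq:fss>0} holds; this is precisely what was verified in the proof of Theorem \ref{thm:uniq}, and it forces the principal eigenvalue of the linearization of \eqref{eq:D} at $u_\lambda$ to be strictly positive, so that $u_\lambda$ is nondegenerate and stable and the implicit function theorem applies along the branch.

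The main obstacle I anticipate is the non-Lipschitz behaviour of the nonlinearity at $u = 0$ when $m < 1$ (and, through the $u^n$ term, when $n < 1$), which renders the linearized operator potentially singular near $\partial\Omega$, where $u_\lambda$ vanishes. The delicate point is that the strict positivity $u_\lambda > 0$ in $\Omega$ and the Hopf-type condition $\frac{\partial u_\lambda}{\partial \nu} < 0$ control the rate of vanishing at the boundary, which is exactly the situation handled by the framework of \cite{HMV2}; verifying that the hypotheses of \cite[Corollary 3.2]{HMV2} are genuinely satisfied throughout the range $0 < m < n \leq 1$ is the step I expect to require the most care.
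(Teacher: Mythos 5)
Your proposal is correct and follows essentially the same route as the paper's proof: existence, positivity, and uniqueness via Theorems \ref{thm:existence} and \ref{thm:uniq}, monotonicity of the branch from uniqueness combined with the sub and supersolution method (your explicit argument with $u_{\lambda_2}$ as subsolution of the $\lambda_1$-problem is exactly what the paper's terse statement refers to), and smoothness plus linearized stability from \cite[Corollary 3.2]{HMV2}. The only difference is that you spell out which sub-cases of the hypotheses apply and how the comparison argument runs, which the paper leaves implicit.
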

\begin{proof}
	As in the proof of Theorem \ref{thm:caseIIa}, the existence, positivity, and uniqueness of $u_\lambda$ are given by Theorems \ref{thm:existence} and \ref{thm:uniq}, the monotonicity of the branch $\lambda \mapsto u_\lambda$ results from the uniqueness and the method of sub and supersolutions, and the smoothness of this branch follows from \cite[Corollary 3.2]{HMV2} which also gives the linearly asymptotically stable character of solutions.
\end{proof}

The branch of solutions to \eqref{eq:D} obtained in Theorem \ref{thm:caseIIb} is depicted on Figure \ref{fig:caseIIb}.

\begin{figure}[ht]
	\centering
	\includegraphics[width=0.6\linewidth]{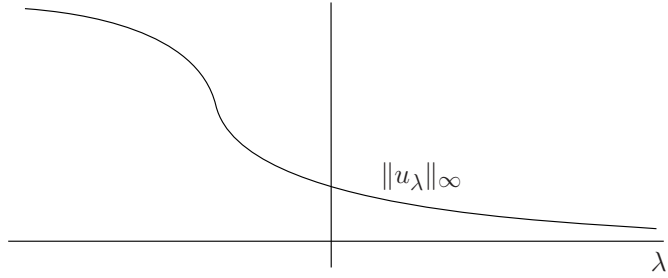}\\
	\caption{The branch of solutions to \eqref{eq:D} in the case $0<m<n\leq 1$.}
	\label{fig:caseIIb}
\end{figure}

\section{Case \ref{III}}\label{sec:III}
In this section, we consider the case $0<m<1 < n < m+1$. 
First, we state the following existence and uniqueness result for \eqref{eq:D}.
\begin{thm}
	Let $0<m<1<n<m+1$ and $\lambda \in \mathbb{R}$. Then there exists a positive solution $u_\lambda$ to \eqref{eq:D}. 
	Moreover, there exists $-\infty < \lambda_0 < 0$ such that $u_\lambda$ is unique for $\lambda \geq \lambda_0$.
\end{thm}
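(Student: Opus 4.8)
The plan is to obtain existence for every $\lambda$ directly from the sub- and supersolution results already proved, and then to reduce the uniqueness assertion to the Brezis--Oswald monotonicity criterion used in Theorem \ref{thm:uniq}, the only new work being a one-variable analysis for negative $\lambda$.

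First, for existence I would observe that Case III is covered, for each value of $\lambda$, by one of the regimes of Theorem \ref{thm:existence}. Indeed, from $0<m<1<n<m+1$ we have both $n>m$ and $n<m+1$; hence for $\lambda>0$ assumption \eqref{eq:them:exist4} applies, for $\lambda=0$ assumption \eqref{eq:them:exist3} applies, and for $\lambda<0$ assumption \eqref{eq:them:exist2} applies. In each regime Theorem \ref{thm:existence} produces a positive solution $u_\lambda$, so existence holds for all $\lambda\in\mathbb{R}$ without further effort.

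Second, for uniqueness when $\lambda\ge 0$ I would invoke Theorem \ref{thm:uniq} once more: since $n>1$, hypothesis \eqref{eq:them:uniq2} is satisfied and yields at most one positive solution. The genuinely new part is $\lambda<0$, which none of the listed hypotheses of Theorem \ref{thm:uniq} covers. Here I would run the Brezis--Oswald scheme exactly as in that proof, reducing uniqueness to the strict decrease of $s\mapsto f(s)/s$, equivalently to \eqref{eq:fss>0}, namely
\[
(1-m)s^{m}+m s^{m+1}+\lambda(n-1)s^{n}>0\qquad\text{for all }s>0.
\]
Dividing by $s^{m}$ and writing $d:=n-m$, which satisfies $0<d<1$ precisely because $m<n<m+1$, this is equivalent to $h(s):=(1-m)+ms+\lambda(n-1)s^{d}>0$ for all $s>0$.

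Third, I would analyze $h$. Since $\lambda<0$ and $n-1>0$, the last term is negative, but with $0<d<1$ it is dominated by the linear term $ms$ as $s\to+\infty$ and vanishes faster than the constant $1-m$ as $s\to0^{+}$; thus $h(0^{+})=1-m>0$ and $h(s)\to+\infty$ as $s\to+\infty$. A direct computation shows $h$ has a unique critical point $s_{*}(\lambda)>0$, which is its global minimum, and substituting the relation $h'(s_{*})=0$ back into $h$ yields the clean value $h(s_{*})=(1-m)-m\,\tfrac{1-d}{d}\,s_{*}$, where $\tfrac{1-d}{d}>0$. One checks that $s_{*}(\lambda)$ is increasing in $|\lambda|$ and that $s_{*}(\lambda)\to 0$ as $\lambda\to 0^{-}$, whence $h(s_{*})\to 1-m>0$. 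Consequently there is a threshold $\lambda_{0}<0$, the unique value at which $h(s_{*})=0$ (and which can in fact be written explicitly in terms of $m,n$), such that $h(s)>0$ for all $s>0$ whenever $\lambda>\lambda_{0}$; together with the case $\lambda\ge 0$ this gives uniqueness for all $\lambda>\lambda_{0}$, with the endpoint handled according to the precise form of the Brezis--Oswald hypotheses.

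I expect the only real obstacle to be the monotonicity analysis of $h$ in the third step: one must locate the minimizer $s_{*}(\lambda)$, confirm that it is a global minimum, and track the sign of $h(s_{*})$ as $\lambda$ varies. If a sharper threshold were wanted, I would instead restrict attention to $s\in\bigl(0,M(\lambda)\bigr]$ using the uniform bound of Lemma \ref{lem:bound}, which is available here because $n<m+1$, so that only the relevant portion of $h$ needs to be kept positive.
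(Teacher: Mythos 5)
Your proposal is correct and follows essentially the same route as the paper: existence from Theorem \ref{thm:existence}, uniqueness for $\lambda\ge 0$ from Theorem \ref{thm:uniq}, and for $\lambda<0$ the Brezis--Oswald reduction to the positivity of $(1-m)+ms+\lambda(n-1)s^{n-m}$, which the paper also settles by locating the extremal point $s^*$ (its tangent-of-slope-$m$ formulation is exactly your critical-point equation $h'(s_*)=0$) and letting $|\lambda|\to 0$ so that $s^*\to 0$. Your explicit formula $h(s_*)=(1-m)-m\tfrac{1-d}{d}s_*$ and the monotonicity of $s_*$ in $|\lambda|$ are a slightly more quantitative rendering of the same argument, yielding in addition an explicit threshold $\lambda_0$, which the paper only notes in passing can be estimated.
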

\begin{proof}
	The existence and positivity (for any $\lambda \in \mathbb{R}$) of $u_\lambda$ are given by Theorem \ref{thm:existence}.
	Let us show the uniqueness result. Notice that the uniqueness of $u_\lambda$ for $\lambda \geq 0$
	is given by Theorem \ref{thm:uniq}, and hence we may assume that $\lambda<0$. 
	Arguing as in the proof of Theorem \ref{thm:uniq}, it is sufficient to show that 
	$$
	f(s)-sf'(s) = 
	(1-m)s^m + m s^{m+1} + \lambda (n-1) s^n = s^m \left(1-m+m s + \lambda (n-1) s^{n-m}\right) > 0
	$$
	or, equivalently, 
	\begin{equation}\label{eq:uniq1}
	1-m+m s > |\lambda| (n-1) s^{n-m}
	\end{equation}
	for all $s > 0$. 
	Since $0<n-m<1$, it is clear that the function $g(s) = |\lambda| (n - 1) s^{n-m}$ is increasing and concave and has slope $m$ at the point
	$$
	s^* = \left(\frac{|\lambda| (n-1) (n-m)}{m}\right)^\frac{1}{1+m-n}.
	$$
	Hence, \eqref{eq:uniq1} will be satisfied if
	\begin{equation}\label{eq:uniq2}
	1-m > |\lambda| (n-1) (s^*)^{n-m} - ms^*.
	\end{equation}
	Since $s^* = s^*(\lambda) \to 0$ as $|\lambda| \to 0$, the condition \eqref{eq:uniq2} holds for all sufficiently small $|\lambda|$, and hence the existence of $\lambda_0$ follows.
	Notice that $\lambda_0$ can be estimated explicitly.
\end{proof}

Let us now discuss a multiplicity issue for $\lambda<0$.
\begin{thm}\label{thm:III}
	Let $0<m<1<n<m+1<2^*-1$. Assume also that $|\Omega| < C$, where the constant $C = C(N,m,n)>0$ is specified in \eqref{eq:omegabound} below.
	Then there exist $-\infty<\underline{\lambda}<\overline{\lambda}<0$ such that for any $\lambda \in (\underline{\lambda}, \overline{\lambda})$ the problem \eqref{eq:D} possesses at least three positive solutions.
\end{thm}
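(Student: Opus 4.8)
The plan is to exploit the variational structure of \eqref{eq:D} together with a refined analysis of the fibering maps $\phi_v$. Since $n<m+1<2^*-1$ we have $\max\{m+2,n+1\}=m+2<2^*$, so $X=W_0^{1,2}(\Omega)$ and $E_\lambda\in C^1(W_0^{1,2}(\Omega),\mathbb{R})$. First I would record the two global facts that drive everything. On the one hand, $E_\lambda$ is weakly lower-semicontinuous, this being the case $\lambda<0$, $n<2^*-1$ of the dichotomy stated in Section~\ref{section:prelim}. On the other hand, exactly as in the proof of Theorem~\ref{thm:caseI}, $E_\lambda$ is coercive: the Dirichlet term dominates $-\frac{1}{m+1}\int_\Omega|u^+|^{m+1}\,dx$ because $m+1<2$, while the positive term $\frac{1}{m+2}\int_\Omega|u^+|^{m+2}\,dx$ absorbs $-\frac{|\lambda|}{n+1}\int_\Omega|u^+|^{n+1}\,dx$ through Young's inequality since $n+1<m+2$. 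Coercivity forces every Palais--Smale sequence to be bounded, and the standard argument (compact lower-order terms together with the $-\Delta$ part) then yields strong convergence; thus $E_\lambda$ satisfies Palais--Smale for \emph{every} $\lambda<0$, even though Lemma~\ref{lem:PS} does not apply here because $m>n-1$. By Lemmas~\ref{lem:reg} and~\ref{lem:smp} each critical point is a classical, strictly positive solution, so it suffices to produce three distinct nonzero critical points.

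The heart of the matter is the shape of the fibers. For $v\ge0$, $v\not\equiv0$, set $A=\int_\Omega|\nabla v|^2\,dx$, $B=\int_\Omega v^{m+1}\,dx$, $C=\int_\Omega v^{m+2}\,dx$, $D=\int_\Omega v^{n+1}\,dx$ and $\mu=-\lambda>0$. Then $\phi_v'(t)=t\,q_v(t)$ with
$$
q_v(t)=A-B\,t^{m-1}+C\,t^{m}-\mu D\,t^{n-1},\qquad t>0,
$$
whose exponents satisfy $m-1<0<n-1<m$ and whose coefficients carry the sign pattern $-,+,-,+$. Hence $q_v$ runs from $-\infty$ as $t\to0^+$ to $+\infty$ as $t\to\infty$ and, by the sign-change count for such generalized polynomials (a Descartes-type rule), has either one or three positive roots. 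I would show that for $|\Omega|$ small and $\mu$ in a suitable window the three-root situation occurs, so that $\phi_v$ has the shape descent--ascent--descent--ascent: an \emph{inner} local minimum $t_1$, a local maximum $t_2$ carrying a positive energy barrier, and an \emph{outer} local minimum $t_3$. The smallness of $|\Omega|$ enters precisely here: through the Poincar\'e inequality $A\ge\lambda_1\|v\|_2^2$ (with $\lambda_1\to\infty$ as $|\Omega|\to0$) and H\"older estimates of $B,D$ against $C$ with $|\Omega|$-dependent constants, one controls the relative weights of the four monomials, which is what guarantees that $q_v$ dips back below zero between its extreme roots for a \emph{nonempty} interval of $\mu$. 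Tracking these inequalities produces the threshold $C=C(N,m,n)$ of \eqref{eq:omegabound} and the endpoints $\underline{\lambda}<\overline{\lambda}<0$.

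With the fiber geometry in hand I would pass to the Nehari manifold $\mathcal{N}_\lambda=\{u\neq0:\langle E_\lambda'(u),u\rangle=0\}$ and split it according to the sign of $\phi_u''(1)$ into $\mathcal{N}^+$ (minima, $\phi''>0$), $\mathcal{N}^-$ (maxima, $\phi''<0$) and $\mathcal{N}^0$. The three-root analysis shows that $\mathcal{N}^+$ splits into an ``inner'' piece $\mathcal{N}^+_{\mathrm{in}}$ and an ``outer'' piece $\mathcal{N}^+_{\mathrm{out}}$ separated by $\mathcal{N}^-$, and that $\mathcal{N}^0=\varnothing$ in the chosen range. Minimizing $E_\lambda$ over $\mathcal{N}^+_{\mathrm{in}}$, over $\mathcal{N}^+_{\mathrm{out}}$, and over $\mathcal{N}^-$ yields three critical values; coercivity and Palais--Smale give attainment, and the emptiness of $\mathcal{N}^0$ makes the Lagrange-multiplier (implicit-function) argument nondegenerate, so that each constrained minimizer is a free critical point of $E_\lambda$. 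Since the three minimizers lie on different pieces of $\mathcal{N}_\lambda$, they are automatically distinct. Equivalently, one may describe them as a global minimizer $u_1$ of negative energy, an outer-valley minimizer $u_2$, and a third solution $u_3$ produced by the mountain pass theorem between the two strict local minima, its level exceeding $\max\{E_\lambda(u_1),E_\lambda(u_2)\}$ and hence genuinely new.

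The main obstacle is the passage from the one-dimensional fiber picture to honest critical points, and it is concentrated in two places. First, one must pin down the exact window $(\underline{\lambda},\overline{\lambda})$ and the domain threshold \eqref{eq:omegabound} guaranteeing three fiber roots robustly enough to make $\mathcal{N}^+_{\mathrm{out}}$ and $\mathcal{N}^-$ nonempty and $\mathcal{N}^0$ empty; this is the delicate quantitative use of $|\Omega|$ small via Poincar\'e and H\"older. Second, one must prove attainment of the infimum on the \emph{outer} component without the minimizing sequence drifting onto $\mathcal{N}^0$ or degenerating, i.e.\ keeping $\|u\|$ bounded away from $0$ and $\infty$ along the sequence; here coercivity, the positive barrier furnished by the local maximum at $t_2$, and the strict separation of the two valleys must be combined. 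Once these points are secured, positivity (Lemma~\ref{lem:smp}) and regularity (Lemma~\ref{lem:reg}) complete the argument.
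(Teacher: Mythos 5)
Your preliminaries are fine and agree with the paper: $X=W_0^{1,2}(\Omega)$, weak lower semicontinuity, coercivity for $\lambda<0$ by absorbing the $|u^+|^{n+1}$ term into the $|u^+|^{m+2}$ term, the Palais--Smale condition deduced from coercivity rather than from Lemma~\ref{lem:PS}, and positivity/regularity via Lemmas~\ref{lem:smp} and~\ref{lem:reg}; the fiber analysis (sign pattern $-,+,-,+$, hence one or three positive roots of $q_v$) is also correct. The genuine gap is the structural claim on which your Nehari construction rests: that for $|\Omega|$ small and $\lambda$ in a window \emph{every} fiber realizes the three-root picture, so that $\mathcal{N}^0=\varnothing$ and $\mathcal{N}^+$ splits into an inner and an outer component separated by $\mathcal{N}^-$. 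This is false for every fixed $\lambda<0$. Writing $A=\int_\Omega|\nabla v|^2dx$, $C=\int_\Omega|v^+|^{m+2}dx$, $D=\int_\Omega|v^+|^{n+1}dx$, the second descent of $q_v$ requires (up to the harmless $t^{m-1}$ term) that $|\lambda|D \gtrsim A^{(m+1-n)/m}C^{(n-1)/m}$. Now take directions such as $v_s=(v_0-s\,w_0)/\|\nabla(v_0-s\,w_0)\|_2$ with $v_0,w_0\geq 0$ of disjoint supports, or nonnegative profiles concentrating on small balls: then $A=1$ while $B,C,D\to 0$, and since $n<m+1$ the required size of $|\lambda|$ blows up like a positive power of $s$. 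Such fibers have a unique critical point, so $\mathcal{N}^+$ contains fiber-minima that are neither ``inner'' nor ``outer'' in your sense; worse, along a continuous path of directions joining a three-root fiber to a one-root fiber a degenerate fiber must occur, so $\mathcal{N}^0\neq\varnothing$ for every $\lambda$ in your window, and the Lagrange-multiplier step is not available globally. Your own ``main obstacle'' paragraph points at exactly this, but the proposal contains no mechanism to resolve it.

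The paper avoids the difficulty precisely by not asking for any uniformity of the fiber structure: the barrier is proved uniformly in direction only on a sphere, via the suprema $A_1,A_2$ over the unit sphere of $W_0^{1,2}(\Omega)$, which yields one radius $\rho_\lambda$ with $E_\lambda\geq\alpha_\lambda>0$ on $\{\|\nabla u\|_2=\rho_\lambda\}$; the outer well is exhibited along the \emph{single} direction $w$ maximizing $A_2$; and the three critical points are then produced by minimization over the ball $B_{\rho_\lambda}$, Ekeland's variational principle on $W_0^{1,2}(\Omega)\setminus B_{\rho_\lambda}$, and the mountain pass theorem --- this is exactly your fallback sentence (``global minimizer, outer-valley minimizer, mountain pass point''), but with the quantitative estimates actually carried out. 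Note also that your account of the role of $|\Omega|$ points the wrong way: shrinking $\Omega$ makes $B$ and $D$ \emph{smaller} relative to $A$ and $C$, which hinders the dip rather than guarantees it. What smallness of $|\Omega|$ really buys (via Faber--Krahn) is compatibility of two opposite requirements on $\lambda$: the barrier needs $|\lambda|$ below a threshold of order $A_1^{-(n-1)/(1-m)}A_2^{-1}$, the outer well needs $|\lambda|$ above a threshold of order $A_3^{(n-1)/m}A_2^{-1}$, and the window $(\underline{\lambda},\overline{\lambda})$ is nonempty exactly when $A_1^{1/(1-m)}A_3^{1/m}$ is small, which is condition \eqref{eq:omegabound}. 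Finally, the fibering route you propose is essentially the one the paper attributes to Lubyshev and explains cannot be transferred directly because the parametric behaviour of the fibers is different; your proposal runs into precisely that obstruction.
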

\begin{proof}
	Let us outline the idea of the proof. We obtain the existence of three solutions by showing that for each $\lambda < 0$ from a certain interval $(\underline{\lambda}, \overline{\lambda})$, $E_\lambda$ possesses a mountain pass level on the boundary $S_{\rho_\lambda}$ of a ball $B_{\rho_\lambda} = \{u \in W_0^{1,2}(\Omega): \|\nabla u\|_2 \leq \rho_\lambda\}$ such that
	\begin{equation}\label{eq:max<0<inf}
	\max\left\{\inf_{B_{\rho_\lambda}} E_\lambda, \inf_{W_0^{1,2}(\Omega) \setminus B_{\rho_\lambda}} E_\lambda \right\} < 0 < \inf_{S_{\rho_\lambda}} E_\lambda.
	\end{equation}
	Combining this fact with the coercivity  of $E_\lambda$ on $W_0^{1,2}(\Omega)$ (see below), we obtain a global minimum, a local minimum, and a mountain pass critical point of $E_\lambda$. 
	The details are as follows.
	
	First, we show that $E_\lambda$ is coercive on $W_0^{1,2}(\Omega)$ for $\lambda<0$. Indeed, since $m+1<n+1<m+2$, we apply H\"older's inequality to get
	\begin{align*}
	E_\lambda(u) 
	\geq
	&\frac{1}{2} \int_\Omega |\nabla u|^2 \, dx 
	\\
	&-
	C_1 \left(\int_\Omega |u^+|^{m+2} \, dx\right)^\frac{m+1}{m+2}
	+
	\frac{1}{m+2} \int_\Omega |u^+|^{m+2} \, dx  
	+
	\lambda C_2 \left(\int_\Omega |u^+|^{m+2} \, dx\right)^\frac{n+1}{m+2},
	\end{align*}
	where $C_1,C_2>0$ are independent of $u \in W_0^{1,2}(\Omega)$. 
	Since the function $h(s) = -C_1 s^\frac{m+1}{m+2} + \frac{1}{m+2} s + \lambda C_2 s^\frac{n+1}{m+2}$ is bounded from below on $[0,+\infty)$, we easily deduce that $E_\lambda(u) \to +\infty$ as $\|\nabla u\|_2 \to +\infty$, which is the desired coercivity.
	
	Second, we show that there exists $\underline{\lambda}<0$ such that for any $\lambda \in (\underline{\lambda},0)$ there are $\rho_\lambda>0$ and $\alpha_\lambda>0$ satisfying $E_\lambda(u) \geq \alpha_\lambda$ provided $\|\nabla u\|_2 = \rho_\lambda$.
	Indeed, consider $S_1 = \{u \in W_0^{1,2}(\Omega): \|\nabla u\|_2 = 1\}$ and take any $u \in S_1$. Recalling that $\lambda<0$, we have
	\begin{equation}\label{eq:Ilower}
	E_\lambda(t u) \geq 
	t^{m+1} 
	\left(
	\frac{t^{1-m}}{2}
	-
	A_1
	+
	\lambda t^{n-m} A_2
	\right),
	\end{equation}
	where 
	$$
	A_1 = \frac{1}{m+1}\sup_{v \in S_1} \int_\Omega |v^+|^{m+1} \, dx > 0
	\quad \text{and} \quad 
	A_2 = \frac{1}{n+1} \sup_{v \in S_1} \int_\Omega |v^+|^{n+1} \, dx > 0.
	$$
	Notice that $A_1$ and $A_2$ are achieved, due to the Rellich-Kondrachov theorem. 
	Let us analyze the function $g_\lambda(s) = \frac{s^{1-m}}{2} - A_1 +  \lambda s^{n-m} A_2$ for $s>0$. 
	Recalling that $0<1-m<n-m$, we see that $\frac{s^{1-m}}{2} - A_1$ is the leading term as $s \to 0$ and $\lambda s^{n-m} A_2$ is the leading term as $s \to +\infty$. 
	Therefore, looking at the first derivative of $g_\lambda(s)$, we deduce that $g_\lambda(s)$ has exactly one critical point which is the point of global maximum. We denote this point as $s_\lambda$ and note that 
	$$
	s_{\lambda} = \left(\frac{1-m}{-2 \lambda (n-m)A_2}\right)^\frac{1}{n-1}.
	$$
	Moreover, straightforward analyzis implies that the map $\lambda \mapsto g_\lambda(s_\lambda)$, $\lambda \in (-\infty,0)$, increases with respect to $\lambda$, it tends to $-A_1$ as $\lambda \to -\infty$, and it tends to $+\infty$ as $\lambda \to 0$. Therefore, there exists $\underline{\lambda}<0$ such that $g_\lambda(s_\lambda)>0$ for all $\lambda \in (\underline{\lambda},0)$ and $g_{\underline{\lambda}}(s_{\underline{\lambda}}) = 0$. 
	Directly investigating the function $g_\lambda(s_\lambda)$, we find that
	$$
	\underline{\lambda} = 
	-
	\frac{(n-1)^\frac{n-1}{1-m} (1-m)}{2^{\frac{n-m}{1-m}} (n-m)^\frac{n-m}{1-m} A_1^{\frac{n-1}{1-m}} A_2}. 
	$$
	Putting $\rho_\lambda = s_\lambda$, $\alpha_\lambda = s_\lambda^{m+1}g_\lambda(s_\lambda)>0$, and recalling that $u \in S_1$ was arbitrary, we conclude from \eqref{eq:Ilower}  that $E_\lambda(\rho_\lambda u) \geq \alpha_\lambda$, which completes the proof of the desired claim. 
	
	Third, we show the existence of a constant $C>0$ which depends only on $N$, $m$, $n$ such that if $|\Omega|<C$, then there is $\overline{\lambda} \in (\underline{\lambda}, 0)$ such that for any $\lambda \in (\underline{\lambda}, \overline{\lambda})$ there exists $v$ satisfying $\|\nabla v\|_2 > \rho_\lambda$ and $E_\lambda(v) < 0$.
	Let $w \in S_1$, $w \geq 0$, be a maximizer for $A_2$. 
	We estimate $E_\lambda(t w)$ from above as follows:
	$$
	E_\lambda(tw)
	\leq  
	t^2 \left(
	\frac{1}{2} 
	+
	t^{m} A_3 
	+
	\lambda t^{n-1} A_2
	\right),
	$$
	where 
	$$
	A_3 = \frac{1}{m+2}\sup_{v \in S_1} \int_{\Omega} |v^+|^{m+2} \, dx > 0.
	$$
	Arguing in much the same way as in the previous step, let us investigate the function $f_\lambda(s) = \frac{1}{2} + s^{m} A_3 + \lambda s^{n-1} A_2$ for $s>0$. 
	Thanks to the assumption $0<n-1<m$, the term $\lambda s^{n-1} A_2$ is leading as $s \to 0$ and the term $s^{m} A_3$ is leading as $s \to +\infty$. This implies that $f_\lambda(s)$ has exactly one critical point $\hat{s}_\lambda$ which is the point of global minimum. Notice that
	$$
	\hat{s}_{\lambda} = \left(\frac{-\lambda (n-1) A_2}{mA_3}\right)^\frac{1}{m+1-n}.
	$$
	Moreover, the map $\lambda \mapsto f_\lambda(\hat{s}_\lambda)$, $\lambda \in (-\infty,0)$, increases with respect to $\lambda$, it tends to $-\infty$ as $\lambda \to -\infty$, and it tends to $\frac{1}{2}$ as $\lambda \to 0$. Thus, one can find $\overline{\lambda}<0$ such that $f_\lambda(\hat{s}_\lambda)<0$ for all $\lambda < \overline{\lambda}$ and $f_{\overline{\lambda}}(\hat{s}_{\overline{\lambda}}) = 0$. 
	Performing direct calculations, we deduce that
	$$
	\overline{\lambda} = 
	-
	\frac{m A_3^\frac{n-1}{m}}{2^\frac{m+1-n}{m} (n-1)^\frac{n-1}{m} (m+1-n)^\frac{m+1-n}{m} A_2}.
	$$
	Comparing now $\underline{\lambda}$ with $\overline{\lambda}$ and $s_{\underline{\lambda}}$ with $\hat{s}_{\overline{\lambda}}$, we see that if
	\begin{equation}\label{eq:a1a3}
	A_1^\frac{1}{1-m} A_3^\frac{1}{m} < \widetilde{C},
	\end{equation}
	then $\underline{\lambda} < \overline{\lambda}$ and $s_{\underline{\lambda}} < \hat{s}_{\overline{\lambda}}$.
	Here the constant $\widetilde{C}>0$ depends only on $m$ and $n$ and has the following explicit expression:
	$$
	\widetilde{C} = \left(\frac{n-1}{2}\right)^\frac{1}{m(1-m)}\left(\frac{1-m}{m}\right)^\frac{1}{n-1} \frac{(m+1-n)^\frac{m+1-n}{m(n-1)}}{(n-m)^\frac{n-m}{(1-m)(n-1)}}.
	$$
	Let us show that \eqref{eq:a1a3} is satisfied provided the measure of $\Omega$ is sufficiently small. 
	Using the Faber-Krahn inequality, we obtain that
	\begin{align*}
	A_1 
	&\equiv
	\frac{1}{m+1} 
	\sup_{v \in W_0^{1,2}(\Omega) \setminus \{0\}} \frac{\int_\Omega |v|^{m+1}\, dx}{\left(\int_\Omega |\nabla v|^2\, dx\right)^\frac{m+1}{2}}
	\\
	&\leq 
	\frac{|\Omega|^{\frac{m+1}{N}+\frac{1-m}{2}}}{(m+1) |\mathcal{B}_1|^{\frac{m+1}{N}+\frac{1-m}{2}}}
	\sup_{v \in W_0^{1,2}(\mathcal{B}_1) \setminus \{0\}} \frac{\int_{\mathcal{B}_1} |v|^{m+1}\, dx}{\left(\int_{\mathcal{B}_1} |\nabla v|^2\, dx\right)^\frac{m+1}{2}}
	=:
	\frac{|\Omega|^{\frac{m+1}{N}+\frac{1-m}{2}}}{|\mathcal{B}_1|^{\frac{m+1}{N}+\frac{1-m}{2}}} A_1^*
	\end{align*}
	and
	\begin{align*}
	A_3 
	&\equiv 
	\frac{1}{m+2}
	\sup_{v \in W_0^{1,2}(\Omega) \setminus \{0\}} \frac{\int_\Omega |v|^{m+2}\, dx}{\left(\int_\Omega |\nabla v|^2\, dx\right)^\frac{m+2}{2}}
	\\
	&\leq 
	\frac{|\Omega|^{\frac{m+2}{N}-\frac{m}{2}}}{(m+2)|\mathcal{B}_1|^{\frac{m+2}{N}-\frac{m}{2}}} 
	\sup_{v \in W_0^{1,2}(\mathcal{B}_1) \setminus \{0\}} \frac{\int_{\mathcal{B}_1} |v|^{m+2}\, dx}{\left(\int_{\mathcal{B}_1} |\nabla v|^2\, dx\right)^\frac{m+2}{2}}
	=:
	\frac{|\Omega|^{\frac{m+2}{N}-\frac{m}{2}}}{|\mathcal{B}_1|^{\frac{m+2}{N}-\frac{m}{2}}} A_3^*,
	\end{align*}
	where $\mathcal{B}_1$ is a unit ball in $\mathbb{R}^N$.
	Therefore, we deduce that if
	\begin{equation}\label{eq:omegabound}
	|\Omega| 
	< C 
	:=
	\widetilde{C}^\frac{Nm(1-m)}{2} |\mathcal{B}_1| (A_1^*)^\frac{-Nm}{2} (A_3^*)^\frac{-N(1-m)}{2},
	\end{equation}
	then \eqref{eq:a1a3} holds true. 
	Since \eqref{eq:omegabound} is supposed to hold, we have $\underline{\lambda} < \overline{\lambda}$ and $s_{\underline{\lambda}} < \hat{s}_{\overline{\lambda}}$.
	
	Let us show that $s_\lambda < \hat{s}_{\lambda}$ for any  $\lambda \in (\underline{\lambda}, \overline{\lambda})$. 
	Since $\hat{s}_{\lambda}$ is decreasing with respect to $\lambda$, we get $\hat{s}_{\overline{\lambda}} < \hat{s}_{\lambda}$ for any $\lambda < \overline{\lambda}$.
	At the same time, since $s_{\lambda}$ increases with respect to $\lambda$, we have $s_{\underline{\lambda}} < s_\lambda$ for any $\lambda > \underline{\lambda}$. 
	If we suppose that $s_{\tilde{\lambda}} = \hat{s}_{\tilde{\lambda}}$ for some $\tilde{\lambda} \in (\underline{\lambda}, \overline{\lambda})$, then we get a contradiction since 
	$s^{m+1}g_\lambda(s) \leq E_\lambda(s w) \leq s^2f_\lambda(s)$ for any $s>0$, but $g_{\tilde{\lambda}}(s_{\tilde{\lambda}}) > 0$ and $f_{\tilde{\lambda}}(s_{\tilde{\lambda}})<0$.
	Therefore, recalling that we put $\rho_\lambda = s_\lambda$, we conclude that $\hat{s}_\lambda > \rho_\lambda$ and 
	$$
	E_\lambda(\hat{s}_\lambda w) \leq \hat{s}_\lambda^2 f_\lambda(\hat{s}_\lambda) < 0
	$$
	for any $\lambda \in (\underline{\lambda}, \overline{\lambda})$, whenever \eqref{eq:omegabound} holds true.
	
	Finally, taking any nonnegative $u \in W_0^{1,2}(\Omega) \setminus \{0\}$ and analyzing the fibers $\phi_u(t)$ associated with $E_\lambda(u)$, we easily see that $E_\lambda(tu)<0$ for every $\lambda \in \mathbb{R}$ and all sufficiently small $t>0$, since $-\frac{t^{m+1}}{m+1} \int_\Omega u^{m+1} \,dx$ is the leading term as $t \to 0$. 
	In particular, we get $\inf_{B_{\rho_\lambda}} E_\lambda < 0$ for any $\lambda \in (\underline{\lambda}, \overline{\lambda})$.

	We are ready to obtain three critical points of $E_\lambda$. 
	Recalling \eqref{eq:max<0<inf}, the first critical point $u_\lambda$ comes as a minimizer of $E_\lambda$ over the ball $B_{\rho_\lambda}$. We have $\|\nabla u_\lambda\|_2 < \rho_\lambda$ and $E_\lambda(u_\lambda)<0$. 
	Recalling that $E_\lambda$ is coercive on $W_0^{1,2}(\Omega)$, we conclude that $E_\lambda$ satisfies the Palais-Smale condition. 
	Therefore, the second critical point $v_\lambda$ is obtained via the standard mountain pass theorem, see, e.g., \cite{rabinowitz}. We have $E_\lambda(v_\lambda)>0$.
	To obtain the third critical point $w_\lambda$, let us minimize $E_\lambda$ over $W_0^{1,2}(\Omega) \setminus B_{\rho_\lambda}$. 
	Since $E_\lambda$ is coercive on $W_0^{1,2}(\Omega)$ and in view of \eqref{eq:max<0<inf}, we can apply Ekeland's variational principle (see \cite[Theorem 3.1]{ekeland}) and obtain a minimizing sequence $\{w_n\} \subset W_0^{1,2}(\Omega) \setminus B_{\rho_\lambda}$ for $E_\lambda$ which is at the same time a Palais-Smale sequence for $E_\lambda$. Since $E_\lambda$ satisfies the Palais-Smale condition on $W_0^{1,2}(\Omega)$, we conclude that the third critical point $w_\lambda$ exists, $\|\nabla w_\lambda\|_2 > \rho_\lambda$, and $E_\lambda(w_\lambda)<0$.
	Clearly, $u_\lambda$, $v_\lambda$, and $w_\lambda$ are different from each other by construction, and positive in $\Omega$ due to Lemma \ref{lem:smp}.
\end{proof}

\begin{figure}[ht]
	\centering
	\includegraphics[width=0.6\linewidth]{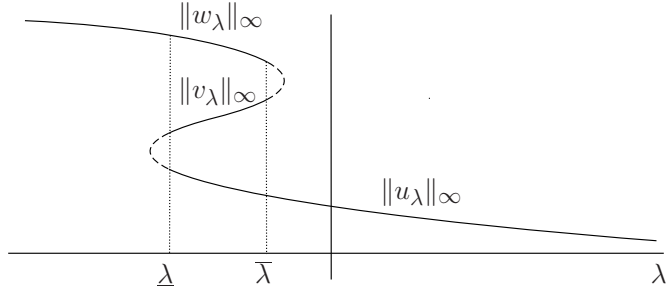}\\
	\caption{The branches of solutions to \eqref{eq:D} in the case $0<m<1<n<m+1$.}
	\label{fig:caseIIIa}
\end{figure}

\begin{remark}
	A result on the existence of three solutions was obtained by \textsc{Lubyshev} in \cite{lubyshev} for a problem similar to \eqref{eq:D} under the case \ref{III}, however with the parameter $\lambda$ placed in front of the difference $u^m - u^{m+1}$ instead of the term $u^{n}$ as in our case. 
	Although the fibers of our functional $E_\lambda$ and of the functional considered in \cite{lubyshev} have a similar ``three critical points''-structure, their parametric behaviour is different. This fact prevents a direct application of the result of \cite{lubyshev} to our problem. 
\end{remark}

\begin{remark}
	Results concerning the existence of $S$-shape type bifurcation curves have been also obtained by using different approaches. For instance, \textsc{Crandall \& Rabinowitz} used continuation methods in \cite{CR},  \textsc{Brown, Ibrahim \& Shivaji} used a tricky application of sub and supersolutions in \cite{BIS},
	\textsc{D\'iaz, Hern\'andez \& Tello} used a combination of the sub and supersolution method with the topological index theory in \cite{DHT},
	and
	\textsc{Arcoya, D\'iaz \& Tello} used a global bifurcation approach in \cite{arcoya}.
\end{remark}

\begin{conjecture}
	We anticipate that the interval $(\underline{\lambda}, \overline{\lambda})$ obtained in Theorem \ref{thm:III} can be expanded in such a way that \eqref{eq:D} possesses at least three solutions for all $\lambda \in (\underline{\lambda}, \overline{\lambda})$ and at least two solutions for $\lambda=\underline{\lambda}$ and $\lambda = \overline{\lambda}$, i.e., the branch of solutions to \eqref{eq:D} has an $S$-shape, see Figure \ref{fig:caseIIIa}.
	Moreover, we presume that the assumption on the size of $\Omega$ used in Theorem \ref{thm:III} is technical and can be omitted.
\end{conjecture}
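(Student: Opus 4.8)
The plan is to replace the variational min–max construction of Theorem \ref{thm:III} by a global bifurcation analysis of the branch of positive solutions, which should simultaneously enlarge the multiplicity interval up to the true turning–point values, yield the two coalescing solutions at the endpoints, and dispense with the restriction on $|\Omega|$. I would work in the positive cone of $C^1_0(\overline{\Omega})$. The decisive structural fact is that for $\lambda<0$ every solution is strictly positive with $\partial u/\partial\nu<0$ on $\partial\Omega$ by Lemma \ref{lem:smp}, so each solution sits in the interior of the cone and the nonlinearity $f(\lambda,s)=s^m-s^{m+1}-\lambda s^n$ is $C^\infty$ in $s$ along the solution. The reformulation is $\Phi(\lambda,u):=u-(-\Delta)^{-1}f(\lambda,u)=0$, a compact perturbation of the identity by elliptic regularity (Lemma \ref{lem:reg}), and the a priori $L^\infty$-bound on every compact $\lambda$-interval furnished by Lemma \ref{lem:bound} (case \eqref{eq:bound2}) makes $\Phi$ proper there, so Leray–Schauder degree is available.

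First I would analyse the linearized operator $L_{\lambda,u}=-\Delta-f_s(\lambda,u)$, whose coefficient $f_s(\lambda,u)=m u^{m-1}-(m+1)u^m-\lambda n u^{n-1}$ is singular like $d(x)^{m-1}$ near $\partial\Omega$. The crucial technical input is that, along positive solutions, this weighted eigenvalue problem is well posed in the sense of \cite{HMV2}: $L_{\lambda,u}$ has a simple principal eigenvalue $\mu_1(\lambda)$ with a positive eigenfunction $\varphi_1$, and the implicit function theorem applies in $C^1_0(\overline{\Omega})$ whenever $\mu_1\neq 0$ — exactly the ingredient already used in Theorems \ref{thm:caseIIa}–\ref{thm:caseIIb}. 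Anchoring the branch at $\lambda$ close to $0^-$, where $s\mapsto f(\lambda,s)/s$ is strictly decreasing and hence the unique solution is nondegenerate and stable ($\mu_1>0$) by the Brezis–Oswald theory \cite{brezisoswald}, I would continue it as a $C^1$ curve $\lambda\mapsto u_\lambda$ while $\mu_1(\lambda)\neq 0$, and combine Rabinowitz' global bifurcation theorem \cite{rabinowitz} with the a priori bounds to conclude that the maximal continuum $\mathcal{C}$ of positive solutions is unbounded and remains a smooth curve off its degenerate points.

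Next I would locate the turning points as the zeros of $\mu_1(\lambda)$ and apply the Crandall–Rabinowitz fold theorem. The transversality condition is automatic, since the transversality functional $\int_\Omega f_\lambda(\lambda,u_\lambda)\,\varphi_1\,dx=-\int_\Omega u_\lambda^{\,n}\varphi_1\,dx\neq 0$; hence every degeneracy is a nondegenerate quadratic fold at which $\mathcal{C}$ turns back. The $S$-shape then reduces to showing that $\mathcal{C}$ has \emph{exactly two} folds, with Morse index running $0\to 1\to 0$ across them. I would detect these through the sign of the Crandall–Rabinowitz curvature coefficient, governed by $\int_\Omega f_{ss}(\lambda,u_\lambda)\,\varphi_1^3\,dx$ with $f_{ss}=m(m-1)u^{m-2}-(m+1)m\,u^{m-1}-\lambda n(n-1)u^{n-2}$: the competition between the negative sublinear terms and the positive term $-\lambda n(n-1)u^{n-2}$ (positive since $\lambda<0$, $n>1$) is precisely what yields two folds of opposite orientation. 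The endpoint claim then follows because at each fold two solutions coalesce, leaving exactly two distinct solutions at $\lambda=\underline{\lambda}$ and $\lambda=\overline{\lambda}$, now \emph{redefined} as the fold values; these are the sharp (extremal) endpoints, which expands the interval of Theorem \ref{thm:III}.

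The main obstacle I anticipate is proving that there are exactly two folds rather than a longer oscillating cascade. Unlike the one-dimensional setting, where a time map makes the bifurcation curve explicit, in general dimension one lacks a direct fold count, and the non-Lipschitz weight $d(x)^{m-1}$ obstructs the convexity arguments (e.g.\ those of \cite{CR,DHT,arcoya}) that would force $(\lambda,\|u\|_\infty)$ to be a genuine $S$. A realistic route is to show that $\lambda\mapsto\mu_1(\lambda)$ changes sign exactly twice by exploiting the Brezis–Oswald monotonicity structure: the loss of uniqueness is tied to the non-monotonicity of $s\mapsto f(\lambda,s)/s$, whose single local maximum and minimum for $\lambda$ in the relevant range should translate, via a sweeping sub–supersolution comparison, into precisely two turning points. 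Removing the smallness of $|\Omega|$ should be the more tractable part: the Faber–Krahn estimate entered Theorem \ref{thm:III} only to order the variational thresholds, a role made superfluous once the folds are produced intrinsically; one must still check that both folds persist for every $\Omega$, which I would approach by a rescaling that reduces the dependence on $\Omega$ to the first Dirichlet eigenvalue $\lambda_1$ alone.
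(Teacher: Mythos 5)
The statement you are trying to prove is a \emph{conjecture}: the paper deliberately offers no proof, so there is no argument of the authors to compare yours against, and the only question is whether your proposal actually closes the problem. It does not. Its sound parts — anchoring a smooth branch near $\lambda=0^-$ where Theorem \ref{thm:uniq} gives a unique, and plausibly nondegenerate, solution; properness from the $L^\infty$-bound of Lemma \ref{lem:bound} (case \eqref{eq:bound2}); handling the singular linearized weight $u^{m-1}\sim d(x)^{m-1}$ via \cite{HMV2}, as the paper itself does in Case II; and the transversality identity $\int_\Omega f_\lambda(\lambda,u_\lambda)\varphi_1\,dx=-\int_\Omega u_\lambda^n\varphi_1\,dx<0$ — are all consistent with the paper's toolkit and would give you folds \emph{where degeneracies occur}. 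But the conjecture's content is precisely the global fold count, and there your argument is an announced hope, not a proof. You assert that every degeneracy is a quadratic fold, yet the Crandall--Rabinowitz curvature coefficient $\int_\Omega f_{ss}(\lambda,u_\lambda)\varphi_1^3\,dx$ is a competition of terms of opposite sign whose outcome you do not determine; nothing you write excludes higher-order degeneracies, secondary bifurcation, or additional components of the solution set disconnected from your continuum $\mathcal{C}$ — and degree theory yields existence and parity, never exactness, so the claim of ``exactly two solutions at the endpoints'' does not follow. The heuristic that the single local maximum/minimum of $s\mapsto f(\lambda,s)/s$ forces exactly two zeros of $\mu_1(\lambda)$ has no known mechanism behind it in dimension $N\geq2$ on general domains: all exact $S$-shape results of this type (cf.\ the approaches in \cite{CR,BIS,DHT,arcoya}) rely on one-dimensional time maps, radial symmetry on balls, or convexity structure that the non-Lipschitz term here destroys, as you yourself concede.

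The second claimed improvement — removing the smallness of $|\Omega|$ — rests on a rescaling that fails. The equation in \eqref{eq:D} carries three powers $u^m$, $u^{m+1}$, $u^n$ of pairwise different homogeneities; a dilation $u(x)=\alpha v(x/R)$ turns $-\Delta u=u^m-u^{m+1}-\lambda u^n$ into $-\Delta v=R^2\alpha^{m-1}v^m-R^2\alpha^{m}v^{m+1}-\lambda R^2\alpha^{n-1}v^n$, and the two free parameters $(\alpha,R)$ can normalize only two of the three coefficients. The problem is therefore not scale invariant, the domain dependence cannot be reduced to $\lambda_1$ alone, and ``check that both folds persist for every $\Omega$'' is exactly the open problem restated, not a step toward solving it. In short: your proposal is a reasonable bifurcation-theoretic program for attacking the conjecture, and parts of it (existence of at least one fold for each sign of the degeneracy, sharpness of the extremal $\lambda$-values on the continuum) could likely be carried out rigorously, but the two assertions that constitute the conjecture — the exact two-fold $S$-shape on the expanded interval and the removal of the restriction on $|\Omega|$ — remain unproved in your sketch.
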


\section{Case \ref{IV}}\label{sec:IV}
In this section, we study the problem \eqref{eq:D} under the assumption $0<m<1 <m+1 \leq n$.
We consider two subcases.

\subsection{Subcase $n=m+1$}
First, we state the following existence result for \eqref{eq:D} when $\lambda \geq -1$.
\begin{thm}
	Let $0<m<1<n=m+1$ and $\lambda \geq -1$. Then there exists a unique positive solution $u_\lambda$ to \eqref{eq:D}.
\end{thm}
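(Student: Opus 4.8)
The plan is to take advantage of the algebraic coincidence at $n=m+1$: since then $u^n=u^{m+1}$, the reaction term collapses to $f(s)=s^m-(1+\lambda)s^{m+1}$, and the two top-order terms of the energy merge into
\begin{equation*}
E_\lambda(u)=\frac{1}{2}\int_\Omega|\nabla u|^2\,dx-\frac{1}{m+1}\int_\Omega|u^+|^{m+1}\,dx+\frac{1+\lambda}{m+2}\int_\Omega|u^+|^{m+2}\,dx.
\end{equation*}
The hypothesis $\lambda\geq-1$ is exactly the requirement that the absorption coefficient $1+\lambda$ be nonnegative. I would prove existence by a single global minimization covering the whole range $\lambda\geq-1$, and uniqueness by the Brezis--Oswald monotonicity criterion already used in Theorem \ref{thm:uniq}.

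For existence, observe first that $E_\lambda$ is weakly lower-semicontinuous on $X$: by the criterion recorded in Section \ref{section:prelim} this holds in the borderline case $n=m+1$ precisely when $\lambda\geq-\frac{n+1}{m+2}=-1$. Coercivity also holds on the whole range, since $1+\lambda\geq0$ keeps the $|u^+|^{m+2}$ term nonnegative while $\frac{1}{m+1}\int_\Omega|u^+|^{m+1}\,dx\leq C\|\nabla u\|_2^{m+1}$ with $m+1<2$. Hence a direct minimization (as in \cite[Chapter I, Theorem 1.2]{struwe}) produces a critical point $u_\lambda\geq0$. To rule out $u_\lambda\equiv0$, I would evaluate $E_\lambda$ along $tv$ for a fixed nonnegative $v\in X\setminus\{0\}$ and note that $-\frac{t^{m+1}}{m+1}\int_\Omega|v|^{m+1}\,dx$ is the leading term as $t\to0^+$, so $E_\lambda(tv)<0$ for small $t$ and the infimum is strictly negative. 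Positivity of $u_\lambda$ then follows from Lemma \ref{lem:smp}, whose assumption \eqref{eq:lem:smp1} covers $-1\leq\lambda\leq0$ and whose assumption \eqref{eq:lem:smp2} covers $\lambda>0$ (here $n=m+1>1$). For $\lambda>-1$ one could alternatively quote Theorem \ref{thm:existence} directly.

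Uniqueness I would obtain exactly as in the proof of Theorem \ref{thm:uniq}, by checking that $f(s)-sf'(s)>0$ for all $s>0$. Substituting $n=m+1$ into \eqref{eq:fss>0} gives
\begin{equation*}
f(s)-sf'(s)=(1-m)s^m+m(1+\lambda)s^{m+1},
\end{equation*}
a sum of two nonnegative terms whose first summand is strictly positive because $m<1$; hence the expression is strictly positive whenever $1+\lambda\geq0$. The argument of \cite[Section 2]{brezisoswald} (see also \cite{HMV}) then yields at most one positive solution, completing the proof.

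The only point requiring care is the endpoint $\lambda=-1$, which falls outside the strict inequalities in Theorems \ref{thm:existence} and \ref{thm:uniq}; in particular the constant supersolution $\overline{u}\equiv M$ used in Theorem \ref{thm:existence} fails there, since $0\geq M^m$ is false. Fortunately the degeneration is benign rather than obstructive: at $\lambda=-1$ the absorption term vanishes altogether, leaving the classical sublinear problem with $f(s)=s^m$ and $E_{-1}(u)=\frac{1}{2}\|\nabla u\|_2^2-\frac{1}{m+1}\int_\Omega|u^+|^{m+1}\,dx$, for which both the minimization and the monotonicity check ($f(s)-sf'(s)=(1-m)s^m>0$) are if anything simpler. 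Thus the unified minimization and Brezis--Oswald arguments close the entire range $\lambda\geq-1$.
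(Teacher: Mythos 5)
Your proposal is correct, but the existence part follows a genuinely different route from the paper's. The paper handles $\lambda>-1$ by simply citing Theorem \ref{thm:existence} (sub- and supersolutions) together with Theorem \ref{thm:uniq}, and at the endpoint $\lambda=-1$ it observes --- exactly as you do --- that the equation collapses to $-\Delta u=u^m$ and quotes \cite{brezisoswald} for existence and uniqueness of the positive solution. You instead run a single global minimization across the whole range $\lambda\ge-1$, exploiting that for $n=m+1$ the weak lower-semicontinuity criterion of Section \ref{section:prelim} holds exactly when $\lambda\ge-1$, and you verify the Brezis--Oswald monotonicity condition
\[
f(s)-sf'(s)=(1-m)s^m+m(1+\lambda)s^{m+1}>0, \quad s>0,
\]
directly; this computation is correct and, unlike case \eqref{eq:them:uniq0} of Theorem \ref{thm:uniq} (which requires $\lambda>-1$), it covers the endpoint as well. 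The variational route buys uniformity in $\lambda$ and the extra information that $u_\lambda$ is a global minimizer with $E_\lambda(u_\lambda)<0$; the paper's route buys brevity, everything having been prepared in Section \ref{section:prelim}. One caveat deserves mention: your blanket claim that coercivity holds ``on the whole range'' is not literally true on $X$ at $\lambda=-1$ when $m+2\ge 2^*$ (so that $X\subsetneq W_0^{1,2}(\Omega)$), because $E_{-1}$ then exerts no control on the $L^{m+2}$-norm and a minimizing sequence is bounded only in $W_0^{1,2}(\Omega)$. Your final paragraph implicitly repairs this: at $\lambda=-1$ one minimizes the subcritical functional $E_{-1}$ over $W_0^{1,2}(\Omega)$ and recovers $u_{-1}\in X$ a posteriori from the $L^\infty$-regularity of solutions of $-\Delta u=u^m$. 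With that reading the argument is complete.
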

\begin{proof}
	The existence, positivity, and uniqueness of $u_\lambda$ in the case $\lambda > -1$ are given by Theorems \ref{thm:existence} and \ref{thm:uniq}. 
	Assume that $\lambda=-1$. Then the problem \eqref{eq:D} turns to be
	\begin{equation}\label{eq:Dm}
	\left\{
	\begin{aligned}
	-\Delta u &= u^m &&\text{in}~ \Omega,\\
	u &= 0 &&\text{on}~ \partial\Omega.
	\end{aligned}
	\right.
	\end{equation}
	Since $0<m<1$, \eqref{eq:Dm} possesses a unique positive solution, see, e.g., \cite{brezisoswald}. The proof is complete.
\end{proof}

Let us now discuss the case $\lambda < -1$. 
We rewrite the problem \eqref{eq:D} in the form 
\begin{equation}\label{eq:Dnm}
\left\{
\begin{aligned}
-\Delta u &= u^m + (-\lambda-1) u^{m+1}&&\text{in}~ \Omega,\\
u &\geq 0,~ u \not\equiv 0 &&\text{in}~ \Omega,\\
u &= 0 &&\text{on}~ \partial\Omega.
\end{aligned}
\right.
\end{equation}
Notice that $u$ is a positive solution to \eqref{eq:Dnm} if and only if $v = (-\lambda-1)^\frac{1}{m} u$ is a solution to the problem
\begin{equation}\label{eq:ABC}
\left\{
\begin{aligned}
-\Delta v &= (-\lambda-1)^\frac{1-m}{m} v^m + v^{m+1}&&\text{in}~ \Omega,\\
v &> 0 &&\text{in}~ \Omega,\\
v &= 0 &&\text{on}~ \partial\Omega.
\end{aligned}
\right.
\end{equation}
Denoting $\mu = (-\lambda-1)^\frac{1-m}{m}$, we see that \eqref{eq:ABC} is the standard problem with the convex-concave nonlinearity, and the description of the existence of solutions to \eqref{eq:ABC} is given by \textsc{Ambrosetti, Brezis \& Cerami} in \cite{ABC}.
Translating this description to the problem \eqref{eq:D}, we deduce that there exists $\lambda^*<-1$ such that for any $\lambda \in (\lambda^*,-1)$, \eqref{eq:D} possesses a positive solution $u_\lambda$. 
If $\lambda=\lambda^*$, then \eqref{eq:D} possesses a positive weak solution in $X$. 
For any $\lambda<\lambda^*$, \eqref{eq:D} does not have solutions.
Moreover, if $m < 2^*-2$, then \eqref{eq:D} possesses another positive solution $v_\lambda$ for any $\lambda \in (\lambda^*,-1)$. 
We refer to Figure \ref{fig:caseIIIb}, where the behaviour of the corresponding branches is depicted.

Let us provide an explicit lower bound for $\lambda^*$.
\begin{lemma}\label{lem:IIIb}
	Let $0<m<1<n=m+1$ and 
	$$
	\lambda < -1 - \lambda_1^\frac{1}{1-m} (1-m) m^\frac{m}{1-m}.
	$$
	Then \eqref{eq:D} does not possess any solution.
\end{lemma}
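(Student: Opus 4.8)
The plan is to test the equation against the first Dirichlet eigenfunction $\varphi_1$ and to convert the statement into a one–variable pointwise estimate on the nonlinearity. First I would rewrite the problem: for $\lambda<-1$ and $n=m+1$ the equation in \eqref{eq:D} reads $-\Delta u = u^m + \mu u^{m+1}$ with $\mu := -\lambda-1 >0$, exactly as in \eqref{eq:Dnm}. Any solution $u$ is nonnegative with $u\not\equiv 0$ by definition (and in fact $u>0$ in $\Omega$ by Lemma \ref{lem:smp}, case \eqref{eq:lem:smp1}). Using $\varphi_1\in W_0^{1,2}(\Omega)$ as a test function in the weak formulation and recalling $-\Delta\varphi_1=\lambda_1\varphi_1$, I obtain the identity
$$
\lambda_1 \int_\Omega u\,\varphi_1\,dx = \int_\Omega \left(u^m + \mu\, u^{m+1}\right)\varphi_1\,dx ,
$$
where every integral is finite since $u\in X\subset L^{m+2}(\Omega)$ (here $\max\{m+2,n+1\}=m+2$) and $\varphi_1$ is bounded.

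The heart of the proof is to show that, under the hypothesis on $\lambda$, the strict pointwise inequality $s^m + \mu\, s^{m+1} > \lambda_1 s$ holds for every $s>0$. Dividing by $s>0$, this is equivalent to $\min_{s>0}\psi(s) > \lambda_1$, where $\psi(s)=s^{m-1}+\mu s^m$. Since $0<m<1$, one has $\psi(s)\to+\infty$ both as $s\to 0^+$ and as $s\to+\infty$, and $\psi$ has the unique critical point
$$
s^* = \frac{1-m}{\mu m},
$$
which is therefore its global minimum. A direct computation (using $1+\mu s^*=1/m$) gives $\psi(s^*)=\mu^{1-m}\,m^{-m}\,(1-m)^{m-1}$, so that the required inequality $\psi(s^*)>\lambda_1$ is, after isolating $\mu$, equivalent to
$$
\mu > \lambda_1^{\frac{1}{1-m}} (1-m)\, m^{\frac{m}{1-m}} ,
$$
that is, to the assumed bound $\lambda < -1 - \lambda_1^{\frac{1}{1-m}}(1-m)\,m^{\frac{m}{1-m}}$.

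Finally I would combine the two ingredients. Since $\varphi_1>0$ in $\Omega$ and $u\ge 0$ with $u\not\equiv 0$, the strict inequality $s^m+\mu s^{m+1}>\lambda_1 s$ (valid for all $s>0$) integrates, against the measure $\varphi_1\,dx$ and over the set $\{u>0\}$ of positive measure, to
$$
\int_\Omega \left(u^m + \mu\, u^{m+1}\right)\varphi_1\,dx > \lambda_1 \int_\Omega u\,\varphi_1\,dx ,
$$
which contradicts the identity above; hence no solution can exist. This is essentially a Barta-type, eigenfunction-testing nonexistence scheme, and I do not anticipate a genuine conceptual obstacle. The only points requiring care are verifying that minimizing $\psi$ reproduces the stated constant \emph{exactly} (a routine but error-prone computation) and justifying the weak integration by parts, which is immediate from $u\in X$, the integrability of $u^m\varphi_1$ and $u^{m+1}\varphi_1$, and the smoothness and positivity of $\varphi_1$.
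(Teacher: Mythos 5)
Your proof is correct and follows essentially the same route as the paper: both test the equation against $\varphi_1$, integrate by parts, and reduce nonexistence to a one-variable calculus fact (your condition $\min_{s>0}\bigl(s^{m-1}+\mu s^{m}\bigr)>\lambda_1$ is exactly the paper's condition $h_\lambda(s)=\lambda_1 s^{1-m}+(\lambda+1)s-1<0$ for all $s>0$, since $h_\lambda(s)=-s^{1-m}\bigl(\psi(s)-\lambda_1\bigr)$). The only cosmetic difference is that you locate the threshold by minimizing $\psi$ directly, while the paper tracks the maximum of $h_\lambda$ and its monotonicity in $\lambda$; the resulting constant is identical.
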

\begin{proof}
	Assume that \eqref{eq:D} possesses a solution $u_\lambda$ for some $\lambda < -1$. By Lemma \ref{lem:smp}, this solution is positive in $\Omega$.
	Multiplying the equation \eqref{eq:D} by $\varphi_1$ and integrating by parts, we obtain
	$$
	\lambda_1 \int_\Omega u_\lambda \varphi_1 \,dx = \int_\Omega u_\lambda^m \varphi_1 \, dx- (\lambda+1) \int_\Omega u_\lambda^{m+1} \varphi_1 \,dx,
	$$
	or, equivalently,
	\begin{equation}\label{eq:lem:contr1}
	\int_\Omega \left(\lambda_1 u_\lambda^{1-m}  + (\lambda+1) u_\lambda - 1 \right) u_\lambda^m \varphi_1 \,dx = 0.
	\end{equation}
	Let us investigate the function $h_\lambda(s) = \lambda_1 s^{1-m} + (\lambda+1) s - 1$ for $s>0$.
	Since $0<m<1$, the term $\lambda_1 s^{1-m}-1$ is leading as $s \to 0$, and the term $(\lambda+1) s$ is leading as $s \to +\infty$. 
	Moreover, $h_\lambda''(s)<0$ for $s>0$.
	Therefore, recalling that $\lambda<-1$, we see that $h_\lambda(s)$ has exactly one critical point which is the point of global maximum. Let us denote this point as $s_\lambda$. 
	Arguing in a straightforward way, we deduce that $h_\lambda(s_\lambda)$ increases with respect to $\lambda \in (-\infty, -1)$.
	Thus, if $h_{\overline{\lambda}}(s_{\overline{\lambda}}) = 0$ for some $\overline{\lambda}$, then $h_\lambda(s_\lambda) < 0$ for all $\lambda < \overline{\lambda}$, and hence for such $\lambda$ we get a contradiction with \eqref{eq:lem:contr1}. 
	Analyzing directly the function $h_\lambda(s_\lambda)$, we deduce that 
	$$
	\overline{\lambda} = -1 - \lambda_1^\frac{1}{1-m} (1-m) m^\frac{m}{1-m},
	$$
	which finishes the proof.
\end{proof}

\begin{figure}[ht]
	\centering
	\includegraphics[width=0.6\linewidth]{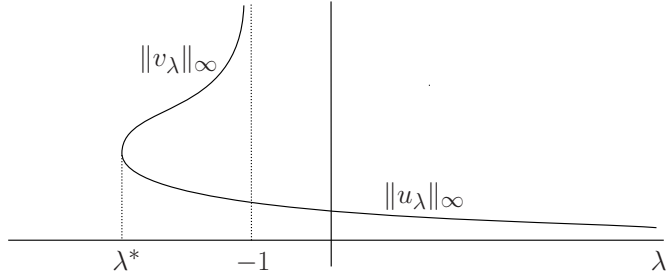}\\
	\caption{The branches of solutions to \eqref{eq:D} in the case $0<m<1<n=m+1$.}
	\label{fig:caseIIIb}
\end{figure}

\subsection{Subcase $1<m+1<n$}\label{sec:IVa}

In this case, the problem \eqref{eq:D} has essentially the same behaviour as the convex-concave problem \eqref{eq:ABC} from the previous subsection. 
Indeed, for $\lambda<0$, the problem \eqref{eq:D} is a special case of a more general problem with a sublinear behaviour near $0$ and superlinear behaviour near $+\infty$, studied by \textsc{De Figueiredo, Gossez \& Ubilla} in \cite{DGU1, DGU2}.
Applying the results of \cite{DGU2} to \eqref{eq:D}, we deduce that there exists $\lambda^* \in (-\infty,0)$ such that \eqref{eq:D} possesses a positive solution $u_\lambda$, satisfying $E_\lambda(u_\lambda)<0$, for any $\lambda \in (\lambda^*,0)$, and \eqref{eq:D} does not have solutions for any $\lambda<\lambda^*$. 
Moreover, if $n < 2^*-1$, then \eqref{eq:D} possesses another positive solution $v_\lambda$ for any $\lambda \in [\lambda^*,0)$, and $E_\lambda(v_\lambda) \leq 0$ for $\lambda=\lambda^*$.
Furthermore, an explicit upper estimate for $\lambda^*$ is provided by \cite[Theorem 2.1]{DGU1}.

Let us complement these facts noting that \eqref{eq:D} possesses a unique positive solution $u_\lambda$ for any $\lambda \geq 0$, which follows from Theorems \ref{thm:existence} and \ref{thm:uniq}.
Figure \ref{fig:caseIV} depicts the behaviour of the corresponding branches. 

\begin{figure}[ht]
	\centering
	\includegraphics[width=0.6\linewidth]{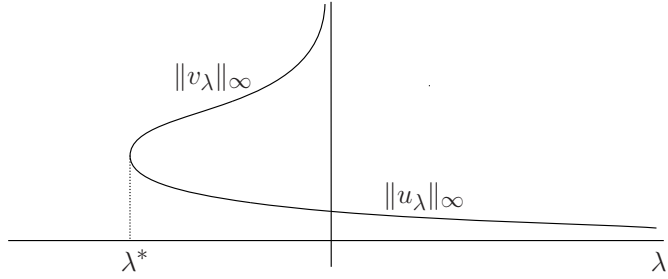}\\
	\caption{The branches of solutions to \eqref{eq:D} in the case $0 < m < 1 < m+1 < n$.}
	\label{fig:caseIV}
\end{figure}

\section{Case \ref{VI}}\label{sec:VI}
In this section, we assume that $m=1$ and $0<n \leq 1$.  
We divide these assumptions into two subcases.

\subsection{Subcase $0<n<m=1$}
Let us consider the following more general version of the problem \eqref{eq:D} under the imposed assumptions on $m$ and $n$:
\begin{equation}\label{eq:Dsign}
\left\{
\begin{aligned}
-\Delta u &= - \lambda |u|^{n-1} u + u - |u| u  &&\text{in}~ \Omega,\\
u &= 0 &&\text{on}~ \partial\Omega.
\end{aligned}
\right.
\end{equation}
Note that any nonzero and nonnegative solution to \eqref{eq:Dsign} is a solution to \eqref{eq:D}.
The problem \eqref{eq:Dsign} with $\lambda>0$ is a special case of a problem studied by \textsc{Perera} in \cite{perera}. It was proved in \cite{perera} that if $\lambda_1<1$, then there exists $\lambda_*>0$ such that \eqref{eq:Dsign} possesses at least two nonzero nonnegative solutions $u_\lambda$ and $v_\lambda$ for any $\lambda \in (0, \lambda_*)$, where $u_\lambda$ is a global minimizer of $E_\lambda$ and $v_\lambda$ is a critical point of the mountain pass type.
Therefore, \eqref{eq:D} has at least two solutions. 
Let us complement this fact by the following two results.

\begin{thm}\label{thm:VIa}
	Let $m=1$ and $0<n<1$.
	If $\lambda<0$, then there is a unique positive solution $u_\lambda$ to \eqref{eq:D}.
	If $\lambda = 0$, then \eqref{eq:D} possesses a solution $u_0$ if and only if $\lambda_1 < 1$. 
	Moreover, $u_0$ is positive in $\Omega$ and unique.
	Furthermore, if \eqref{eq:D} possesses a solution $u_\lambda$ for $\lambda>0$, then $\lambda_1<1$.
\end{thm}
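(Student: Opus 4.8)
Let $m=1$ and $0<n<1$. We must prove three assertions about the problem \eqref{eq:D}:

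1. If $\lambda < 0$: there is a unique positive solution $u_\lambda$.
2. If $\lambda = 0$: there is a solution $u_0$ iff $\lambda_1 < 1$; moreover $u_0$ is positive and unique.
3. If $\lambda > 0$ and \eqref{eq:D} has a solution, then $\lambda_1 < 1$.

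Let me think about how to prove each.

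The plan is to establish the three assertions separately, quoting Theorems \ref{thm:existence2} and \ref{thm:uniq} wherever they apply and supplying a sharpened subsolution in the one regime they miss. I would open with the two claims that merely constrain the parameter: the nonexistence half of the $\lambda=0$ statement and the entire $\lambda>0$ statement. Both follow at once from the concluding assertion of Theorem \ref{thm:existence2}, which says that if $\lambda_1 \geq 1$ and \eqref{eq:D} admits a solution, then $\lambda<0$. Contrapositively, existence of a solution for some $\lambda \geq 0$ forces $\lambda_1<1$. Reading this with $\lambda=0$ shows that $\lambda_1 \geq 1$ precludes any solution, and reading it with $\lambda>0$ is exactly the final claim of the theorem. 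So these two pieces require no separate argument.

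Next I would dispatch the positive assertion for $\lambda=0$ under the hypothesis $\lambda_1<1$, where the equation is the logistic problem $-\Delta u = u - u^2$. Existence of a positive solution is precisely case \eqref{eq:them:exist0m} of Theorem \ref{thm:existence2} (with $m=1$), and may be cited verbatim. For uniqueness I would appeal to Theorem \ref{thm:uniq} through the branch \eqref{eq:them:uniq1}, noting that with $m=1$ and $\lambda=0$ the decisive inequality \eqref{eq:fss>0} collapses to $s^2>0$, which holds for every $s>0$; hence $f(s)/s$ is strictly decreasing and the Brezis--Oswald machinery yields at most one positive solution. Positivity is automatic from the subsolution used in the existence step and is in any case guaranteed by Lemma \ref{lem:smp} in the regime $\lambda \leq 0$.

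The case $\lambda<0$ is where the real work lies, and it is the one genuine obstacle: Theorem \ref{thm:existence2} delivers existence for $\lambda<0$ only when $\lambda_1 \leq 1$, whereas the present statement is \emph{unconditional} in $\lambda_1$. I would therefore redo the sub/supersolution construction for the equation $-\Delta u = u - u^2 + |\lambda|u^n$ directly. The supersolution $\overline{u}\equiv M$ works for large $M$ because $M-M^2+|\lambda|M^n \leq 0$ once $M \geq 1+|\lambda|M^{n-1}$ and $M^{n-1}\to 0$. The subtle point is the subsolution $\underline{u}=c\varphi_1$: after dividing the required inequality $\lambda_1 c\varphi_1 \leq c\varphi_1 - c^2\varphi_1^2 + |\lambda|c^n\varphi_1^n$ by $c^n\varphi_1^n$, the left-hand side is bounded above by $\lambda_1 c^{1-n}\to 0$ (since $1-n>0$), while the right-hand side is bounded below by $|\lambda|-c^{2-n}\to|\lambda|>0$. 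Thus the subsolution inequality holds for all small $c>0$ no matter how large $\lambda_1$ is, which is exactly where the hypothesis $n<1$ is indispensable and where the argument improves on Theorem \ref{thm:existence2}. Ordering $c\varphi_1 \leq M$ and invoking the sub/supersolution method then yields a positive solution $u_\lambda \geq c\varphi_1$, whose uniqueness follows once more from Theorem \ref{thm:uniq} via \eqref{eq:them:uniq1} (here \eqref{eq:fss>0} reads $s^2+\lambda(n-1)s^n>0$, and $\lambda(n-1)>0$ because both factors are negative). Recognizing and verifying this $n<1$ subsolution estimate is the crux; the remainder is citation.
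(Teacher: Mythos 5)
Your proof is correct, and its skeleton (sub/supersolutions for existence, Theorem \ref{thm:uniq} for uniqueness, the last assertion of Theorem \ref{thm:existence2} for the $\lambda \geq 0$ constraints) matches the paper's. The one substantive divergence is the case $\lambda<0$, which you single out as ``where the real work lies'': there you misread the paper's toolbox rather than found a hole in it. You checked this regime only against Theorem \ref{thm:existence2} (which indeed requires $\lambda_1 \leq 1$), but the relevant result is Theorem \ref{thm:existence}, whose case \eqref{eq:them:exist1} ($\lambda<0$, $n<1$) carries \emph{no} restriction on $m$ --- recall that Section \ref{section:prelim} explicitly does not impose $m \leq 1$ --- and hence covers $m=1$ with no hypothesis on $\lambda_1$. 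The paper's treatment of $\lambda<0$ is therefore a one-line citation of Theorems \ref{thm:existence} and \ref{thm:uniq}. Your hand-rolled subsolution $\underline{u}=c\varphi_1$, with the division by $c^n\varphi_1^n$ and the observation that $\lambda_1 c^{1-n}\to 0$ while the right-hand side tends to $|\lambda|>0$, is precisely the computation by which the paper proves Theorem \ref{thm:existence} under \eqref{eq:them:exist1}; so this part of your argument is a correct re-derivation rather than a genuinely new step. The other, minor difference is at $\lambda=0$: the paper identifies \eqref{eq:D} with the logistic problem and cites \cite{brezisoswald} for ``solution iff $\lambda_1<1$, positive and unique'' in one stroke, whereas you assemble the same conclusion from Theorem \ref{thm:existence2} (case \eqref{eq:them:exist0m} plus the final assertion read contrapositively), Theorem \ref{thm:uniq} via \eqref{eq:them:uniq1} (where \eqref{eq:fss>0} reduces to $s^2>0$), and Lemma \ref{lem:smp}; this is equally valid and has the mild virtue of staying entirely inside the paper's own results.
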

\begin{proof}
	The existence, positivity, and uniqueness of $u_\lambda$ for $\lambda< 0$ are given by Theorems \ref{thm:existence} and \ref{thm:uniq}. 
	Let us consider the existence for $\lambda = 0$. 
	In this case, the problem \eqref{eq:D} turns to be
	\begin{equation}\label{eq:Dm=1}
	\left\{
	\begin{aligned}
	-\Delta u &= u-u^2 &&\text{in}~ \Omega,\\
 	u &\geq 0, ~ u \not\equiv 0 &&\text{in}~ \Omega,\\
	u &= 0 &&\text{on}~ \partial\Omega,
	\end{aligned}
	\right.
	\end{equation}
	which is the problem with the logistic nonlinearity, and it is known that \eqref{eq:Dm=1} possesses a (unique positive) solution if and only if $\lambda_1 < 1$, see, e.g., \cite{brezisoswald}.
	The last assertion of the theorem is given by Theorem \ref{thm:existence2}.
\end{proof}

\begin{figure}[ht]
	\centering
	\includegraphics[width=0.6\linewidth]{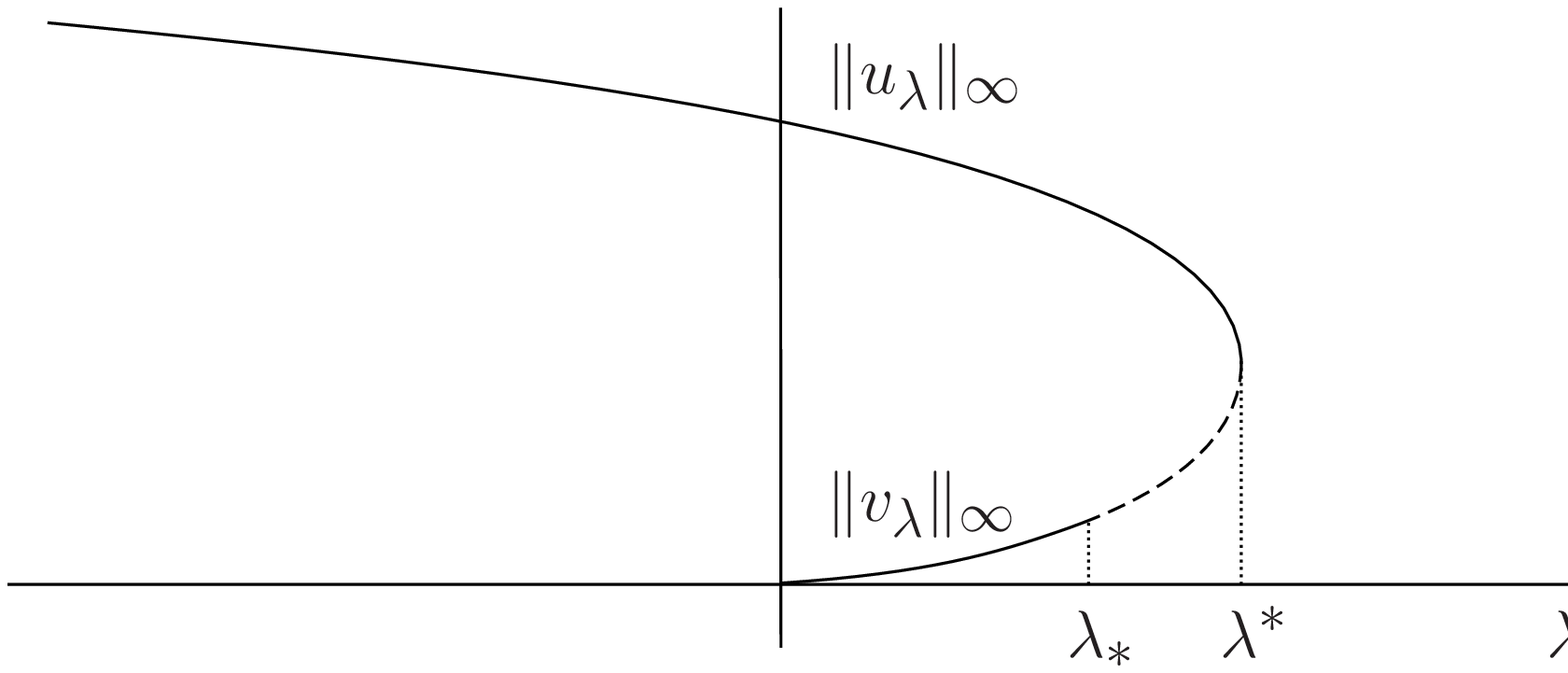}\\
	\caption{The branches of solutions to \eqref{eq:D} in the case $0<n<m=1$ and $\lambda_1<1$.}
	\label{fig:caseVI}
\end{figure}

\begin{thm}\label{thm:VI1}
	Let $m=1$, $0<n<1$, and $\lambda_1<1$. 
	Then there exists $\lambda^*>0$ satisfying
	$$
	\lambda^* \leq \frac{(1-\lambda_1)^{2-n}(1-n)^{1-n}}{(2-n)^{2-n}},
	$$	
	such that \eqref{eq:D} possesses a solution $u_\lambda$ for all $\lambda \in (0, \lambda^*)$, and \eqref{eq:D} has no solution for any $\lambda > \lambda^*$. 
\end{thm}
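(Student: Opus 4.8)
The plan is to follow the template of Theorem~\ref{thm:caseI}. Set
$$
\lambda^* = \sup\{\lambda > 0 :~ \eqref{eq:D} \text{ has a solution}\},
$$
so that nonexistence for $\lambda > \lambda^*$ is automatic from the definition, and then establish three things: (i) $\lambda^* > 0$; (ii) a solution exists for \emph{every} $\lambda \in (0,\lambda^*)$; and (iii) the explicit upper bound on $\lambda^*$. For (i) I would invoke the result of \textsc{Perera}~\cite{perera} quoted above, which for $\lambda_1 < 1$ already furnishes a threshold $\lambda_* > 0$ below which \eqref{eq:D} admits solutions; hence $\lambda^* \geq \lambda_* > 0$. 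Alternatively, the global minimization scheme of Theorem~\ref{thm:caseI} adapts, the choice $v = \varphi_1$ together with $\lambda_1 < 1$ ensuring $E_0(t\varphi_1) < 0$ for small $t$ (so that the coercive, weakly lower-semicontinuous functional $E_\lambda$ has a nontrivial global minimizer for small $\lambda > 0$).

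For (ii) I would use monotone continuation via the sub- and supersolution method. Fix $\lambda \in (0,\lambda^*)$ and choose, by definition of the supremum, some $\lambda_0 \in (\lambda,\lambda^*)$ for which \eqref{eq:D} has a solution $u_{\lambda_0}$. Since the right-hand side $(1-s)s - \lambda s^n$ is pointwise decreasing in the parameter, $u_{\lambda_0}$ satisfies $-\Delta u_{\lambda_0} = (1-u_{\lambda_0})u_{\lambda_0} - \lambda_0 u_{\lambda_0}^n \le (1-u_{\lambda_0})u_{\lambda_0} - \lambda u_{\lambda_0}^n$ and thus is a subsolution at level $\lambda$; a large constant $\overline{u}\equiv M$ is a supersolution as in Theorem~\ref{thm:existence}, and $u_{\lambda_0}\le 1\le M$ by Lemma~\ref{lem:bound}. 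The method then produces a solution $u_\lambda$ with $u_{\lambda_0} \le u_\lambda \le M$.

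The heart of the proof — and the source of the explicit bound — is step (iii). By Lemma~\ref{lem:reg} any solution is classical, so I may test the equation against $\varphi_1$ and integrate by parts using $-\Delta\varphi_1 = \lambda_1\varphi_1$, obtaining
$$
\int_\Omega \big[(\lambda_1-1)u + u^2 + \lambda u^n\big]\varphi_1\,dx = 0.
$$
Writing the bracket as $s^n\psi(s)$ with $\psi(s) = \lambda + s^{2-n} - (1-\lambda_1)s^{1-n}$ and recalling $\varphi_1 > 0$, $u \ge 0$, $u \not\equiv 0$, this identity is impossible whenever $\psi(s) > 0$ for all $s > 0$. A direct minimization of $s \mapsto s^{2-n}-(1-\lambda_1)s^{1-n}$, whose minimizer is $s_0 = (1-\lambda_1)(1-n)/(2-n)$ (well-defined since $\lambda_1 < 1$ and $n < 1$), shows that $\psi > 0$ on $(0,\infty)$ precisely when $\lambda > (1-\lambda_1)^{2-n}(1-n)^{1-n}/(2-n)^{2-n}$. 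Hence no solution exists for such $\lambda$, which forces the claimed bound on $\lambda^*$.

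The main obstacle I anticipate is not analytic difficulty but executing step (iii) cleanly in the possible presence of compact support solutions: since $\lambda>0$ and $n<m=1$, the strong maximum principle may fail (Remark~\ref{rem:smp}) and $u$ could vanish on a set of positive measure. The $\varphi_1$-testing argument is robust to this, because the integrand factors through $u^n \ge 0$ and the sign analysis of $\psi$ is unaffected; one must, however, phrase the contradiction as ``$\psi > 0$ everywhere'' rather than through a pointwise evaluation of $-\Delta u(x_0)\ge 0$ at an interior maximum. This is exactly why the bound obtained here is sharper (by the factor $(1-\lambda_1)^{2-n}$) than the one the maximum-principle argument of Theorem~\ref{thm:caseI} would yield.
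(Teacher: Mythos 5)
Your proposal is correct and its core follows the same route as the paper's proof: $\lambda^*$ is defined as the supremum of admissible $\lambda>0$, its positivity comes from the result of \textsc{Perera} \cite{perera}, and the upper bound comes from testing the equation against $\varphi_1$ and minimizing the resulting one-variable function --- your $\psi(s)=\lambda+s^{2-n}-(1-\lambda_1)s^{1-n}$ is exactly the paper's $h_\lambda(s)=(\lambda_1-1)s^{1-n}+s^{2-n}+\lambda$, with the same minimizer and the same threshold value. The one genuine difference is your step (ii): the paper's proof only records $0<\lambda_*\leq\lambda^*$ and the upper bound, leaving the existence of a solution for \emph{every} $\lambda\in(0,\lambda^*)$ implicit, whereas you supply the monotone continuation argument (a solution $u_{\lambda_0}$ at a higher parameter value $\lambda_0\in(\lambda,\lambda^*)$ serves as a subsolution at level $\lambda$, with a large constant as supersolution and $u_{\lambda_0}\leq 1\leq M$ by Lemma \ref{lem:bound}). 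This addition is worthwhile, but note one technical point: since $\lambda>0$ and $0<n<1$, the nonlinearity $f(s)=s-s^2-\lambda s^n$ satisfies $f'(s)\to-\infty$ as $s\to 0^+$, so $f(s)+Ks$ is not nondecreasing near $s=0$ for any finite $K$ and the classical monotone-iteration version of the sub/supersolution method does not apply directly (the subsolution may vanish somewhere, as compact support solutions are not excluded in this regime). You should therefore invoke a version of the method valid for merely continuous nonlinearities between ordered sub- and supersolutions (truncation plus Schauder fixed point, or the weak sub/supersolution theorem of Deuel--Hess type), after which your argument is complete.
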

\begin{proof}
	Recalling the result of \cite{perera} cited above, let us define
	$$
	\lambda^* = \sup\{\lambda>0:~ \eqref{eq:D} ~\text{has a solution}\}.
	$$
	Clearly, $0<\lambda_* \leq \lambda^*$. 
	To obtain the upper bound for $\lambda^*$ we argue in much the same way as in the proof of Lemma \ref{lem:IIIb}. 
	Let $u_\lambda$ be a solution to \eqref{eq:D}. 
	Multiplying the equation \eqref{eq:D} by $\varphi_1$ and integrating by parts, we get
	$$
	\lambda_1 \int_\Omega u_\lambda \varphi_1 \,dx = \int_\Omega u_\lambda \varphi_1 \, dx -  \int_\Omega u_\lambda^{2} \varphi_1 \,dx - \lambda \int_\Omega u_\lambda^{n} \varphi_1 \,dx,
	$$
	or, equivalently,
	\begin{equation}\label{eq:lem:contr3}
	\int_\Omega \left((\lambda_1-1) u_\lambda^{1-n}  + u_\lambda^{2-n} + \lambda \right) u_\lambda^n \varphi_1 \,dx = 0.
	\end{equation}
	Let us investigate the function $h_\lambda(s) = (\lambda_1-1) s^{1-n} + s^{2-n} + \lambda$ for $s>0$.
	Since $0<n<1$, we see that $(\lambda_1-1) s^{1-n}+\lambda$ is the leading term as $s \to 0$, and $s^{2-n}$ is the leading term as $s \to +\infty$. 
	Moreover, recalling that $\lambda_1<1$, we have $h_\lambda''(s)>0$ for all $s>0$.
	Therefore, $h_\lambda(s)$ has exactly one critical point in $(0,+\infty)$ which is the point of global minimum. Let us denote this point as $s_\lambda$ and note that $h_\lambda(s_\lambda)$ increases with respect to $\lambda$.
	Thus, if $h_{\overline{\lambda}}(s_{\overline{\lambda}}) = 0$ for some $\overline{\lambda}$, then $h_\lambda(s_\lambda) > 0$ for all $\lambda > \overline{\lambda}$, and hence for such $\lambda$ we get a contradiction with \eqref{eq:lem:contr3}. 
	Directly analyzing the function $h_\lambda(s)$, we obtain that 
	$$
	\overline{\lambda} = \frac{(1-\lambda_1)^{2-n}(1-n)^{1-n}}{(2-n)^{2-n}},
	$$
	which finishes the proof.
\end{proof}
	
\begin{remark}\label{rem:caseV:compact}
	As in the case \ref{I}, the results of \cite{FLS} or \cite{GST} can be applied to show that for any sufficiently small $\lambda>0$ there exists an appropriate $\Omega$ such that \eqref{eq:D} possesses a radial compact support solution in $\Omega$.
\end{remark}	
	
\begin{conjecture}
	We anticipate that the branches of solutions to \eqref{eq:D} behave as depicted on Figure \ref{fig:caseVI}. 
	That is, the problem \eqref{eq:D} has at least two solutions for all $\lambda \in (0, \lambda^*)$ and at least one solution for $\lambda=\lambda^*$. 
	Also, it is an open question weather solutions to \eqref{eq:D} with $\lambda>0$ obtained in \cite{perera} are positive or of compact support type.
\end{conjecture}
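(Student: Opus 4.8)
The plan is to upgrade the single-solution statement of Theorem~\ref{thm:VI1} to the full branch picture of Figure~\ref{fig:caseVI} by producing, for each $\lambda\in(0,\lambda^*)$, a \emph{stable} large solution and a second \emph{unstable} solution, and then passing to the limit $\lambda\uparrow\lambda^*$. Throughout I would exploit the a priori bound $\|u\|_\infty\le 1$ of Lemma~\ref{lem:bound} together with Lemma~\ref{lem:reg}, which confine every solution to a fixed bounded set of $C^{1,\alpha}(\overline\Omega)$; this is the source of all the compactness below. The large solution $\overline u_\lambda$ I would obtain by monotone iteration from the constant supersolution $\overline u\equiv 1$ (admissible for all $\lambda\ge 0$, since $1-1-\lambda\le 0$): the iteration decreases and, because for fixed $\lambda\in(0,\lambda^*)$ a solution already exists by Theorem~\ref{thm:VI1} and lies below $1$, it converges to the maximal solution $\overline u_\lambda$. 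The essential point is that $\overline u_\lambda$ cannot degenerate to $0$: testing against $\varphi_1$ as in \eqref{eq:lem:contr3} and dividing by $\int_\Omega u^n\varphi_1\,dx$ gives
\begin{equation*}
(\lambda_1-1)\,\frac{\int_\Omega u\,\varphi_1\,dx}{\int_\Omega u^n\varphi_1\,dx}
+\frac{\int_\Omega u^2\varphi_1\,dx}{\int_\Omega u^n\varphi_1\,dx}
+\lambda = 0,
\end{equation*}
and since $n<1$ the two quotients are $O(\|u\|_\infty^{1-n})$ and $O(\|u\|_\infty^{2-n})$, so $\|u\|_\infty\to 0$ would force $\lambda\to 0$. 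Hence $\overline u_\lambda$ stays bounded away from $0$ on compact subintervals of $(0,\lambda^*)$.

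For the second solution I would argue by Leray--Schauder degree. Since all solutions satisfy $0\le u\le 1$, I would truncate the nonlinearity outside $[0,1]$ and set $S_\lambda=(-\Delta)^{-1}\tilde f_\lambda$ on $C(\overline\Omega)$, a compact map whose fixed points in $[0,1]$ are exactly the solutions of \eqref{eq:D}. Because $\|u\|_\infty\le 1$ holds uniformly for all $\lambda$, including $\lambda$ slightly above $\lambda^*$ where by Theorem~\ref{thm:VI1} there are no solutions, a homotopy in $\lambda$ across $\lambda^*$ yields $\deg(I-S_\lambda,B_R,0)=0$ in a fixed large ball $B_R\subset C(\overline\Omega)$ for every $\lambda\in(0,\lambda^*)$. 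If one can show that $\overline u_\lambda$ is an isolated fixed point of index $+1$ --- which I would attempt either by proving it is a strict local minimizer of $E_\lambda$ or by showing its linearization has a positive principal eigenvalue --- then additivity of the degree forces at least one further solution $v_\lambda\ne\overline u_\lambda$, giving the multiplicity for all $\lambda\in(0,\lambda^*)$. On the subinterval $(0,\lambda_*)$ this second solution is precisely the mountain-pass solution of \textsc{Perera} \cite{perera}; the degree argument is what is needed to carry it past $\lambda_*$ up to $\lambda^*$.

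Existence at the endpoint $\lambda=\lambda^*$ I would settle by a limiting argument that uses only ingredients already in the paper. Choosing $\lambda_k\uparrow\lambda^*$ and solutions $u_k=\overline u_{\lambda_k}$, the uniform $C^{1,\alpha}$ bound and the Arzel\`a--Ascoli theorem give a subsequence converging in $C^1(\overline\Omega)$ to some $u^*$ that solves \eqref{eq:D} at $\lambda=\lambda^*$ by continuity of the nonlinearity, while nontriviality is immediate from the displayed identity: if $u^*\equiv 0$ then $\|u_k\|_\infty\to 0$, forcing $\lambda_k\to 0$, a contradiction.

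The main obstacle is the index computation in the degree argument. The failure of the strong maximum principle in this regime --- none of the cases of Lemma~\ref{lem:smp} applies when $m=1$ and $0<n<1$, and by Remark~\ref{rem:smp} compact-support (flat) solutions are genuinely possible --- means $\overline u_\lambda$ may vanish on a set of positive measure, so the linearized operator around it is singular (the coefficient $n u^{n-1}$ blows up where $u=0$) and the usual notions of nondegeneracy and stability become delicate. Establishing that $\overline u_\lambda$ nonetheless has index $+1$, and ruling out the degenerate possibility that the two solutions have already coalesced for some $\lambda<\lambda^*$, is exactly the gap that keeps the statement at the level of a conjecture; I expect its resolution needs either a refined analysis of the dead-core set or a monotone-separation argument keeping $\overline u_\lambda$ and $v_\lambda$ ordered and distinct throughout $(0,\lambda^*)$.
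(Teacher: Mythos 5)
This statement is a \emph{conjecture}: the paper deliberately offers no proof, so the only question is whether your sketch actually closes the open problem, and it does not --- you concede as much about the index computation. That self-identified gap is genuine, but there are two further places where the sketch breaks that you have not flagged. First, the monotone iteration from $\overline u\equiv 1$ is not available in this regime. The standard scheme needs a constant $K$ such that $s\mapsto f(s)+Ks$ is nondecreasing on $[0,1]$, where $f(s)=s-s^2-\lambda s^n$; but $f'(s)=1-2s-\lambda n s^{n-1}\to-\infty$ as $s\to 0^+$, so no such $K$ exists, and since every iterate vanishes on $\partial\Omega$ the iteration necessarily runs through the region where monotonicity fails. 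This is the same non-Lipschitz absorption that defeats Lemma \ref{lem:smp} (cf.\ Remark \ref{rem:smp}), and it undermines not only the linearization/index step but already the construction and ordering of the ``maximal'' solution $\overline u_\lambda$. Note that the paper itself obtains existence for $\lambda>0$ only variationally via \cite{perera}, precisely because the sub/supersolution machinery of Theorem \ref{thm:existence} does not cover $n<m$ with $\lambda>0$ (the function $c\varphi_1$ fails to be a subsolution there).

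Second, the degree bookkeeping is incorrect as stated: after truncation, $u\equiv 0$ is a fixed point of $S_\lambda$ for \emph{every} $\lambda$, including $\lambda>\lambda^*$, so the homotopy yields $\deg(I-S_\lambda,B_R,0)=\mathrm{ind}(0)$, not $0$. To extract multiplicity you must excise a neighborhood of $0$ --- which is legitimate, since your $\varphi_1$-identity (sound, and essentially the computation \eqref{eq:lem:contr3} from the proof of Theorem \ref{thm:VI1}) shows nontrivial solutions cannot accumulate at $0$ for fixed $\lambda>0$ --- and then compute $\mathrm{ind}(0)$ for a nonlinearity that is not differentiable at $0$, which is yet another delicate point of the same non-Lipschitz origin. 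The parts of your argument that do hold up are the uniform bound $\|u\|_\infty\le 1$, the regularity and Arzel\`a--Ascoli limiting argument at $\lambda=\lambda^*$, and the nondegeneration estimate $\lambda\le(1-\lambda_1)\|u\|_\infty^{1-n}$; these would indeed deliver a solution at the endpoint \emph{once} a branch of solutions bounded away from zero near $\lambda^*$ is produced --- but producing it is exactly the open problem, so the statement remains a conjecture.
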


\subsection{Subcase $n=m=1$}
Under the assumption $n=m=1$, the problem \eqref{eq:D} can be written as
\begin{equation}\label{eq:Dmn=1}
\left\{
\begin{aligned}
-\Delta u &= (1-\lambda) u-u^2 &&\text{in}~ \Omega,\\
 u &\geq 0, ~ u \not\equiv 0 &&\text{in}~ \Omega,\\
u &= 0 &&\text{on}~ \partial\Omega,
\end{aligned}
\right.
\end{equation}
which is the problem with the logistic nonlinearity, and it is known that \eqref{eq:Dmn=1} possesses a (unique positive) solution if and only if $\lambda < 1 - \lambda_1$, see, e.g., \cite{brezisoswald}. 
The behaviour of the branch of solutions is reminiscent of whose depicted on Figure \ref{fig:caseIIa}.

\section{Case \ref{VII}}\label{sec:VII}
In this section, we study the problem \eqref{eq:D} under the assumption $m=1$ and $1 < n \leq 2$. 
Let us split this assumption into two subcases.

\subsection{Subcase $1<n<2$}
\begin{thm}\label{thm:VII1}
	Let $m=1 < n < \min\{2,2^*-1\}$. Then the following assertions hold:
	\begin{enumerate}[label={\rm(\roman*)}]
		\item\label{thm:VII2-1} If $\lambda_1<1$, then \eqref{eq:D} possesses a positive solution $u_\lambda$ for any $\lambda \in \mathbb{R}$. 
		Moreover, if $\lambda \geq 0$, then $u_\lambda$ is unique.
		\item\label{thm:VII2-2} If $\lambda_1 = 1$, then \eqref{eq:D} possesses a positive solution $u_\lambda$ if and only if $\lambda<0$.
	\end{enumerate}
\end{thm}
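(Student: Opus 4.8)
The plan is to deduce both assertions directly from the general existence, uniqueness, and nonexistence results already established in Theorems \ref{thm:existence2} and \ref{thm:uniq}. Since $m=1$, the hypotheses of the present statement place us squarely inside the scope of those theorems, so no fresh variational or sub/supersolution construction is needed; the work is almost entirely a matter of matching parameter ranges.

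For part \ref{thm:VII2-1}, I would establish existence for every $\lambda \in \mathbb{R}$ by splitting according to the sign of $\lambda$ and invoking the corresponding branch of Theorem \ref{thm:existence2}: the case $\lambda=0$ is covered by \eqref{eq:them:exist0m} (using $\lambda_1<1$), the case $\lambda>0$ by \eqref{eq:them:exist1m} (using $\lambda_1<1$ and $n>1$), and the case $\lambda<0$ by \eqref{eq:them:exist2m} (using $\lambda_1\leq 1$ and $n<2$). Positivity of the produced solution is part of the conclusion of Theorem \ref{thm:existence2}. For the uniqueness claim when $\lambda\geq 0$, I would apply Theorem \ref{thm:uniq} in the form \eqref{eq:them:uniq2}, whose hypotheses $\lambda\geq 0$ and $n\geq 1$ hold here because $n>1$; this gives at most one positive solution, which together with existence yields exactly one.

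Part \ref{thm:VII2-2} has two directions. For the ``if'' direction I would again appeal to \eqref{eq:them:exist2m} of Theorem \ref{thm:existence2}, which requires precisely $\lambda<0$, $\lambda_1\leq 1$, and $n<2$, all satisfied when $\lambda_1=1$. For the ``only if'' direction I would use the final (``Moreover'') clause of Theorem \ref{thm:existence2}: if $\lambda_1\geq 1$ and \eqref{eq:D} possesses a solution, then necessarily $\lambda<0$. Since $\lambda_1=1\geq 1$, this rules out any solution for $\lambda\geq 0$ and completes the equivalence.

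The only point requiring care, and hence the ``main obstacle'' such as it is, is bookkeeping at the boundary values of $\lambda_1$. One must notice that the existence branch \eqref{eq:them:exist2m} is stated with the \emph{non-strict} inequality $\lambda_1\leq 1$, which is exactly what makes it usable in part \ref{thm:VII2-2} where $\lambda_1=1$, whereas the $\lambda\geq 0$ branches genuinely need the strict condition $\lambda_1<1$. A second small point is that Theorem \ref{thm:uniq} only asserts uniqueness \emph{among positive solutions}; to upgrade this to genuine uniqueness I would observe, via Lemma \ref{lem:smp} (cases \eqref{eq:lem:smp1} and \eqref{eq:lem:smp2}, that is $\lambda\leq 0$, or $\lambda>0$ with $n\geq 1$), that every solution of \eqref{eq:D} is in fact positive in the relevant ranges. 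With these observations in place the argument reduces to a routine verification of hypotheses.
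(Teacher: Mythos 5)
Your proposal is correct and follows essentially the same route as the paper, whose proof of this theorem is precisely a citation of Theorem \ref{thm:existence2} (for existence, positivity, and the ``only if'' direction of part \ref{thm:VII2-2}) and Theorem \ref{thm:uniq} (for uniqueness when $\lambda \geq 0$); your case-by-case matching of the hypotheses \eqref{eq:them:exist0m}--\eqref{eq:them:exist1m} and \eqref{eq:them:uniq2} just makes explicit what the paper leaves implicit. Your extra observation that Lemma \ref{lem:smp} upgrades ``at most one positive solution'' to genuine uniqueness (since every solution is positive in these parameter ranges) is a legitimate refinement of a detail the paper glosses over.
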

\begin{proof}
	\ref{thm:VII2-1} 
	The existence and  positivity of $u_\lambda$ are given by Theorem \ref{thm:existence2}. The uniqueness of $u_\lambda$ for $\lambda \geq 0$ is given by Theorem \ref{thm:uniq}. 
	The assertion \ref{thm:VII2-2} is contained in  Theorem \ref{thm:existence2}.
\end{proof}

\begin{figure}[ht]
	\centering
	\includegraphics[width=0.6\linewidth]{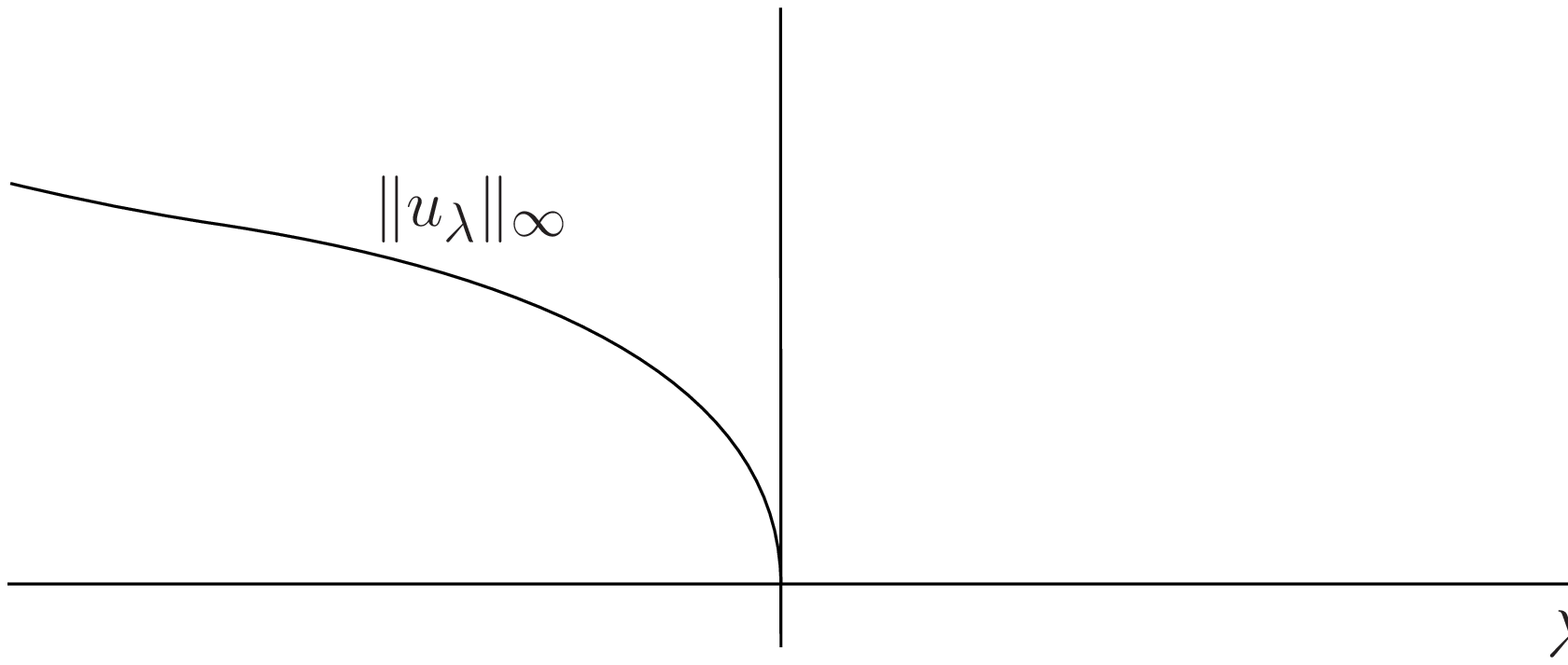}\\
	\caption{The branch of solutions to \eqref{eq:D} in the case $m=1 < n < \min\{2,2^*-1\}$ and $\lambda_1=1$.}
	\label{fig:caseVIIb}
\end{figure}

\begin{thm}\label{thm:VIIb}
	Let $m=1 < n < \min\{2,2^*-1\}$ and $\lambda_1 > 1$. 
	Then there exists $\lambda^*$ satisfying
	$$
	\lambda^* \leq -\frac{(\lambda_1-1)^{2-n}}{(n-1)^{n-1}(2-n)^{2-n}} < 0,
	$$
	such that \eqref{eq:D} possesses a positive solution $u_\lambda$ for any $\lambda < \lambda^*$, and \eqref{eq:D} has no solution for any $\lambda > \lambda^*$. 
	If, moreover, $N < 6$, then there exists $\lambda_* \in (-\infty, \lambda^*]$ such that \eqref{eq:D} possesses another positive solution $v_\lambda$ for any $\lambda < \lambda_*$.
\end{thm}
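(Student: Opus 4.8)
The plan is to treat the two assertions by separate mechanisms: an upper bound for $\lambda^*$ together with nonexistence coming from testing against $\varphi_1$, and the existence of the solutions coming from the direct method combined with a mountain pass construction. I would first set $\lambda^* = \sup\{\lambda:~\eqref{eq:D}~\text{has a solution}\}$ and establish the bound exactly as in Lemma~\ref{lem:IIIb} and Theorem~\ref{thm:VI1}. Multiplying the equation by $\varphi_1$ and integrating by parts gives
$$
(\lambda_1-1)\int_\Omega u_\lambda\varphi_1\,dx + \int_\Omega u_\lambda^2\varphi_1\,dx + \lambda\int_\Omega u_\lambda^n\varphi_1\,dx = 0,
$$
i.e. $\int_\Omega h_\lambda(u_\lambda)\,u_\lambda^n\varphi_1\,dx = 0$ with $h_\lambda(s) = (\lambda_1-1)s^{1-n} + s^{2-n} + \lambda$. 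Since $1<n<2$ and $\lambda_1>1$, the function $h_\lambda$ tends to $+\infty$ at both endpoints of $(0,+\infty)$ and has a unique interior minimum at $s_* = (\lambda_1-1)(n-1)/(2-n)$; the value $h_\lambda(s_*)$ increases in $\lambda$ and vanishes precisely at $\overline{\lambda} = -(\lambda_1-1)^{2-n}/\big((n-1)^{n-1}(2-n)^{2-n}\big)$. For $\lambda>\overline{\lambda}$ the integrand is strictly positive, which is impossible; hence no solution exists for $\lambda>\overline{\lambda}$, and $\lambda^*\le\overline{\lambda}<0$.

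For existence, the feature distinguishing this case from $\lambda_1<1$ is that, because $\lambda_1>1$, the quadratic part $\tfrac12\int_\Omega|\nabla u|^2-\tfrac12\int_\Omega|u^+|^2$ is positive definite, so the subsolution $c\varphi_1$ of Theorem~\ref{thm:existence2} is unavailable and I would produce the first solution variationally. For $\lambda<0$ the functional $E_\lambda$ is weakly lower semicontinuous on $X$, and using $\int_\Omega|\nabla u|^2\ge\lambda_1\int_\Omega u^2$ together with $n+1<3=m+2$ one checks that $E_\lambda$ is coercive (this needs no restriction on $N$). Evaluating $E_\lambda$ on $tv$ for a fixed nonnegative $v\ne0$ shows $E_\lambda(tv)\to-\infty$ as $\lambda\to-\infty$, so $\inf_X E_\lambda<0$ for all sufficiently negative $\lambda$; the direct method (\cite[Chapter~I, Theorem~1.2]{struwe}) then yields a nontrivial global minimizer, which is a classical positive solution by Lemmas~\ref{lem:reg} and~\ref{lem:smp}. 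To propagate existence to the whole half-line $\lambda<\lambda^*$ I would use monotonicity: since $f_\lambda(s)=s-s^2-\lambda s^n$ is decreasing in $\lambda$, any solution at some $\lambda_0$ is a subsolution at every $\lambda<\lambda_0$, while a large constant is a supersolution (its ordering is legitimate since $\|u\|_\infty$ is controlled by Lemma~\ref{lem:bound}). The sub and supersolution method then delivers a solution for every $\lambda<\lambda_0$, so the existence set is exactly $(-\infty,\lambda^*)$.

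For the second solution when $N<6$ I would construct a mountain pass between $0$ and the global minimizer. The hypothesis $\lambda_1>1$ makes $u=0$ a strict local minimum with $E_\lambda(0)=0$: on a small sphere $\|\nabla u\|_2=\rho$ the positive-definite quadratic part dominates the higher-order terms (here $n+1>2$), so $E_\lambda\ge\beta>0$ there. Setting $\lambda_*=\sup\{\lambda:~\inf_X E_\lambda<0\}$—a nonempty, downward-closed set with $\lambda_*\le\lambda^*$—for every $\lambda<\lambda_*$ the global minimizer $u_\lambda$ satisfies $E_\lambda(u_\lambda)<0$ and lies outside the small ball, which is precisely the mountain pass geometry. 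The main obstacle is the Palais--Smale condition, since Lemma~\ref{lem:PS} does not apply when $m=1$; I would instead deduce it from coercivity (which bounds Palais--Smale sequences) combined with compactness of the embeddings into $L^{m+2}$ and $L^{n+1}$. This compactness requires $m+2=3<2^*$, equivalently $N<6$, and is exactly where the dimensional restriction enters. The mountain pass theorem (\cite{rabinowitz}) then produces a critical point $v_\lambda$ with $E_\lambda(v_\lambda)>0$, necessarily distinct from $u_\lambda$ and positive by Lemma~\ref{lem:smp}.
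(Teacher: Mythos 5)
Your proposal is correct and follows the same skeleton as the paper's proof: the upper bound on $\lambda^*$ via testing against $\varphi_1$ (your $h_\lambda(s)=(\lambda_1-1)s^{1-n}+s^{2-n}+\lambda$ is the paper's $(\lambda_1-1)+s+\lambda s^{n-1}$ divided by $s^{n-1}$, and yields the same $\overline{\lambda}$), the first solution by global minimization of the coercive, weakly lower-semicontinuous $E_\lambda$ for sufficiently negative $\lambda$, and the second solution by a mountain pass between $0$ and the negative-energy minimizer, with the Palais--Smale condition deduced from coercivity plus compactness of the embedding $W_0^{1,2}(\Omega)\hookrightarrow L^3(\Omega)$ --- you correctly identify that this, not Lemma~\ref{lem:PS}, is where $N<6$ enters, exactly as in the paper. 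The one genuine difference is to your credit: the paper's proof, as written, establishes existence only for $\lambda<0$ with $|\lambda|$ large and then defines $\lambda^*$ as the supremum of the existence set, leaving unaddressed why a solution exists for \emph{every} $\lambda<\lambda^*$ (a priori the existence set could have gaps). Your monotonicity step --- a solution at $\lambda_0$ is a subsolution at any $\lambda<\lambda_0$ since $f_\lambda(s)=s-s^2-\lambda s^n$ is decreasing in $\lambda$, paired with a large constant supersolution, with $f_\lambda$ locally Lipschitz on $[0,M]$ because $m=1$ and $n>1$ --- shows the existence set is downward closed and thereby fills this gap; it is the natural completion of the paper's argument.
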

\begin{proof}
	We prove the existence of the first positive solution to \eqref{eq:D} for all $\lambda<0$ with sufficiently large $|\lambda|$ via the global minimization of the corresponding energy functional
	$$
	E_\lambda(u) = 
	\frac{1}{2} \int_\Omega |\nabla u|^2 \, dx 
	-
	\frac{1}{2} \int_\Omega |u^+|^{2} \, dx
	+
	\frac{1}{3} \int_\Omega |u^+|^{3} \, dx  
	+
	\frac{\lambda}{n+1} \int_\Omega |u^+|^{n+1} \, dx
	$$
	which is weakly lower-semicontinuous on $X$. 
	Note that $E_\lambda$ is coercive on the cone of nonnegative functions of $X$.
	Indeed, using the definition of $\lambda_1$ and the Sobolev embedding theorem, we get
	\begin{equation}\label{eq:caseVI1}
	E_\lambda(u) 
	\geq  
	\frac{\lambda_1-1}{2\lambda_1} \int_\Omega |\nabla u|^2 \, dx 
	+
	\frac{1}{3} \int_\Omega |u^+|^{3} \, dx  
	+
	\lambda C \left(\int_\Omega |u^+|^{3} \, dx \right)^\frac{n+1}{3},
	\end{equation}
	which easily implies that $E_\lambda(u) \to +\infty$ as $\|u\|_X \to +\infty$, provided $u \geq 0$ in $\Omega$.
	At the same time, fixing any $v \in X \setminus \{0\}$ such that $v \geq 0$ in $\Omega$, we see that $E_\lambda(v) < 0$ for all $\lambda<0$ with sufficiently large $|\lambda|$.
	Therefore, applying the direct minimization procedure, we obtain the existence of a critical point $u_\lambda$ of $E_\lambda$ for all such $\lambda<0$, and $u_\lambda$ satisfies $E_\lambda(u_\lambda)<0$. 
	Moreover, $u_\lambda > 0$ in $\Omega$ due to Lemma \ref{lem:smp}.

	Let us define the critical value
	$$
	\lambda^* = \sup\{\lambda \in \mathbb{R}:~ \eqref{eq:D} ~\text{has a solution}\}
	$$
	and obtain its upper bound by the same argument as in Lemma \ref{lem:IIIb}.
	Denoting by $u_\lambda$ a solution to \eqref{eq:D}, we multiply \eqref{eq:D} by $\varphi_1$ and obtain
	\begin{equation}\label{eq:lem:contr3x}
	\int_\Omega \left((\lambda_1-1) + u_\lambda + \lambda u_\lambda^{n-1} \right) u_\lambda \varphi_1 \,dx = 0.
	\end{equation}
	Clearly, we have $\lambda < 0$, since otherwise \eqref{eq:lem:contr3x} cannot be true.
	Considering the function $h_\lambda(s) = (\lambda_1-1) + s + \lambda s^{n-1}$ for $s>0$ and recalling that $1<n<2$, we see that the term $\lambda s^{n-1}$ is leading as $s \to 0$, and the term $s$ is leading as $s \to +\infty$. 
	We deduce that $h_\lambda''(s)>0$ for all $s>0$, and hence $h_\lambda(s)$ has exactly one critical point $s_\lambda$ which is the point of global minimum. 
	Performing direct calculations, we conclude that $h_\lambda(s_\lambda)$ increases with respect to $\lambda \in (-\infty,0)$.
	Thus, if $h_{\overline{\lambda}}(s_{\overline{\lambda}}) = 0$ for some $\overline{\lambda}$, then $h_\lambda(s_\lambda) > 0$ for all $\lambda > \overline{\lambda}$, and hence for such $\lambda$ we get a contradiction with \eqref{eq:lem:contr3x}. 
	Directly analyzing the function $h_\lambda(s)$, we see that
	$$
	\overline{\lambda} = -\frac{(\lambda_1-1)^{2-n}}{(n-1)^{n-1}(2-n)^{2-n}} < 0,
	$$
	which proves that $\lambda^* \leq \overline{\lambda} < 0$.
	
	Let us now introduce the critical value
	$$
	\lambda_* = \sup\{\lambda<0:~ \eqref{eq:D} ~\text{has a solution $u_\lambda$ such that $E_\lambda(u_\lambda)<0$}\}.
	$$
	We prove the existence of another positive solution $v_\lambda$ to \eqref{eq:D} for $\lambda < \lambda_*$, provided $N<6$, by means of the mountain pass theorem. 
	Note that since $N<6$, we have $3<2^*$, and hence $X=W_0^{1,2}(\Omega)$.
	Fixing any $\rho>0$ and considering any $u$ such that $\|\nabla u\|_2 = \rho$, we see from \eqref{eq:caseVI1} that
	$$
	E_\lambda(u) \geq \frac{\lambda_1-1}{2\lambda_1} \rho^2 + \lambda C \rho^{n+1},
	$$
	where we applied the Sobolev embedding theorem to the last term in \eqref{eq:caseVI1}.
	Since $n>1$, we see that for any sufficiently small $\rho>0$ there exists $\alpha>0$ such that $E_\lambda(u) > \alpha$ provided $\|\nabla u\|_2 = \rho$.
	On the other hand, by definition of $\lambda_*$, for any $\lambda<\lambda_*$ there exists $u_\lambda$ satisfying $E_\lambda(u_\lambda)<0$. 
	Therefore, taking sufficiently small $\rho>0$ such that $\rho < \|u_\lambda\|_2$, and noting that $E_\lambda$ satisfies the Palais-Smale condition on $W_0^{1,2}(\Omega)$ in view of its coercivity, we apply the mountain pass theorem and obtain a solution $v_\lambda$ to \eqref{eq:D} for any $\lambda<\lambda_*$, such that $E_\lambda(v_\lambda)>0$.
	This solution is positive in $\Omega$ due to Lemma \ref{lem:smp}.
\end{proof}

\begin{remark}
	Generalizations of the problem \eqref{eq:D} which allow indefinite weights have been studied, e.g., by \textsc{Alama \& Tarantello} in \cite{AT1} and \textsc{Shi \& Shivaji} in \cite{SS}.
	However, in these articles the placement of the parameter $\lambda$ is different than in our case. 
	While the right-hand side in \eqref{eq:D} under the case \ref{VII} reads as $u - \lambda u^n - u^2$ with $\lambda<0$, the results of \cite{AT1} would translate to the right-hand side of the form $\lambda u + u^n - u^2$ with $\lambda \in \mathbb{R}$, and the results of \cite{SS} to $\lambda(u + u^n - u^2)$ with $\lambda >0$.
	This fact prevents a direct application of the results from \cite{AT1,SS} to our problem.
\end{remark}

\begin{figure}[ht]
	\centering
	\includegraphics[width=0.6\linewidth]{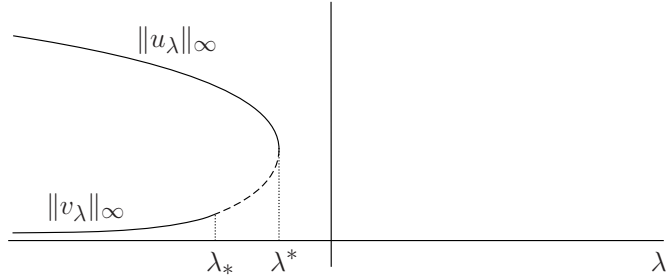}\\
	\caption{The branches of solutions to \eqref{eq:D} in the case $m=1 < n < \min\{2,2^*-1\}$ and $\lambda_1>1$.}
	\label{fig:caseVIIc}
\end{figure}

\begin{conjecture}
	We expect that the branches of solutions in the cases \ref{thm:VII2-1} and \ref{thm:VII2-2} of Theorem \ref{thm:VII1} have the same behaviour as depicted on Figure \ref{fig:caseIIb} and Figure \ref{fig:caseVIIb}, respectively. 
	Moreover, we anticipate that the branches of solutions obtained in Theorem \ref{thm:VIIb} behave as depicted on Figure \ref{fig:caseVIIc}.
	Notice that the question about the uniqueness in the case \ref{thm:VII2-2} of Theorem \ref{thm:VII1} remains open. 
\end{conjecture}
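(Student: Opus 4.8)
The plan toward establishing these conjectured branch descriptions is to trace the solution set of \eqref{eq:D} as the zero set of the $C^1$ map
$$
F(\lambda,u)=u-(-\Delta)^{-1}f_\lambda(u),\qquad f_\lambda(s)=s-s^2-\lambda s^n,
$$
on $\mathbb{R}\times C^1_0(\overline{\Omega})$, where $(-\Delta)^{-1}$ is the Dirichlet solution operator and positive solutions of \eqref{eq:D} are precisely the nontrivial positive zeros of $F$. The uniform bound $\|u\|_\infty\le M(\lambda)$ from Lemma \ref{lem:bound} together with the regularity of Lemma \ref{lem:reg} confines every branch to a fixed ball of $C^1_0(\overline{\Omega})$ over each compact $\lambda$-interval, which supplies the compactness for global continuation. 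The central quantity is the principal eigenvalue $\mu_1(\lambda)$ of the linearization $L_\lambda=-\Delta-f_\lambda'(u_\lambda)$: at every $\lambda$ with $\mu_1(\lambda)\ne0$ the implicit function theorem produces a locally unique smooth curve $\lambda\mapsto u_\lambda$, whereas the zeros of $\mu_1$ are the candidate turning points that shape the diagrams.

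For the branch in case \ref{thm:VII2-1} ($\lambda_1<1$, Figure \ref{fig:caseIIb}), existence for all $\lambda$ and uniqueness for $\lambda\ge0$ are already in hand (Theorems \ref{thm:existence2}, \ref{thm:uniq}); the missing content is global uniqueness together with strict monotonicity and stability for $\lambda<0$. Here the Brezis--Oswald mechanism of Theorem \ref{thm:uniq} breaks down: for $\lambda<0$ one computes $f_\lambda(s)-sf_\lambda'(s)=s^2+\lambda(n-1)s^n$, which is positive only for $s>(|\lambda|(n-1))^{1/(2-n)}$ and negative near $0$, and since every positive solution vanishes on $\partial\Omega$ and hence attains all small values, $f_\lambda(s)/s$ fails to be decreasing along the solution. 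The plan is instead to prove nondegeneracy $\mu_1(\lambda)>0$ directly: the solution is realized as a global minimizer of the coercive functional $E_\lambda$, so the second variation gives $\mu_1(\lambda)\ge0$, and the heuristic that $\lambda_1<1$ makes $u=0$ a \emph{non}-minimum of $E_\lambda$ (removing the mountain-pass geometry responsible for a second solution in case \ref{thm:VIIb}) should be promoted to a strict inequality $\mu_1(\lambda)>0$ via a quantitative use of $\lambda_1<1$ combined with the a priori bound controlling the boundary layer. Once $\mu_1(\lambda)>0$ is secured, the implicit function theorem yields a single global smooth curve, monotonicity follows by differentiating $F(\lambda,u_\lambda)=0$ to get $L_\lambda(\partial_\lambda u_\lambda)=-u_\lambda^n<0$ and invoking the maximum principle, and uniqueness follows by an open--closed (sweeping) argument on the set of $\lambda$ for which $u_\lambda$ is unique and nondegenerate, which is nonempty as it contains $[0,\infty)$.

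For case \ref{thm:VII2-2} ($\lambda_1=1$, Figure \ref{fig:caseVIIb}) the branch should emanate from the trivial solution at $(\lambda,u)=(0,0)$. Since $f_\lambda'(0)=1=\lambda_1$ for \emph{every} $\lambda$ (because $n>1$), the transversality hypothesis of the Crandall--Rabinowitz theorem fails and one must run a degenerate Lyapunov--Schmidt reduction. Writing $u=t\varphi_1+w$ with $w\perp\varphi_1$ and projecting the equation onto $\varphi_1$, the leading balance is
$$
t^2\int_\Omega\varphi_1^3\,dx+\lambda\,t^n\int_\Omega\varphi_1^{n+1}\,dx=0
$$
to leading order as $t\to0^+$, whence $\lambda\sim-c\,t^{2-n}\to0^-$ for some $c>0$; this produces a one-sided branch bifurcating into $\lambda<0$ and matching the existence range. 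Global continuation along this curve, kept compact by Lemma \ref{lem:bound}, then yields the depicted branch, while uniqueness remains open as noted.

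Finally, for the fold picture of Theorem \ref{thm:VIIb} ($\lambda_1>1$, Figure \ref{fig:caseVIIc}) the aim is to connect the minimizing family $u_\lambda$ (with $\mu_1>0$, the stable lower branch) and the mountain-pass family $v_\lambda$ (the unstable upper branch) into one curve with a turning point at $\lambda=\lambda^*$. I would first show that $\lambda_*=\lambda^*$ and that at $\lambda^*$ there is a solution with $\mu_1(\lambda^*)=0$ forming a nondegenerate fold in the Crandall--Rabinowitz sense, so the two families join smoothly; combining this with global bifurcation theory and a Leray--Schauder degree count (the lower solution carrying index $+1$, the mountain-pass one index $-1$, so total degree $0$ on a large ball, consistent with pairwise annihilation across the fold) delivers the connected $S$-shaped branch. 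The hardest step, here and in case \ref{thm:VII2-1}, is the \emph{exact} count: ruling out a third positive solution for $\lambda<\lambda^*$ and excluding secondary turning points requires a global nondegeneracy control on $L_\lambda$ that is not furnished by the variational construction alone, and it is precisely this gap that relegates the present branch descriptions to the status of a conjecture.
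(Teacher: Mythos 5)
This statement is one of the paper's explicitly labelled conjectures: the paper offers \emph{no} proof, and deliberately leaves the branch behaviour (and the uniqueness question in case \ref{thm:VII2-2}) open. Your proposal, by your own admission in its closing lines, does not close it either — it is a continuation-theoretic roadmap whose decisive steps are asserted rather than proved. Concretely: in case \ref{thm:VII2-1} the entire argument pivots on promoting $\mu_1(\lambda)\geq 0$ (which the second variation at a global minimizer does give) to strict nondegeneracy $\mu_1(\lambda)>0$ for all $\lambda<0$; but a fold on the minimizing branch is \emph{exactly} the scenario the conjectured Figure \ref{fig:caseIIb} picture excludes, so invoking ``a quantitative use of $\lambda_1<1$'' here is a restatement of the conjecture, not an argument. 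Your open--closed sweeping argument has the same problem at the closedness step, and the degree heuristic cannot rule out extra solutions appearing in index-cancelling pairs on components disconnected from the known branch; this is precisely the gap you name at the end. Your preparatory observations are sound and consistent with the paper — in particular the computation $f_\lambda(s)-sf_\lambda'(s)=s^2+\lambda(n-1)s^n$, showing why Brezis--Oswald fails for $\lambda<0$ and why Theorem \ref{thm:VII1}\ref{thm:VII2-1} only claims uniqueness for $\lambda\geq 0$, is correct — but soundness of the setup is not progress on the fold exclusion.

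In case \ref{thm:VII2-2} your leading-order balance is correct and is in fact already implicit in the paper's own technique: testing \eqref{eq:D} with $\varphi_1$ as in the proofs of Theorems \ref{thm:VIIb} and \ref{thm:caseVIII} gives, for $\lambda_1=1$, the identity $\int_\Omega\left(u_\lambda+\lambda u_\lambda^{n-1}\right)u_\lambda\varphi_1\,dx=0$, which forces $\lambda<0$ and, along any branch with $u_\lambda\sim t\varphi_1$, yields your asymptotics $\lambda\sim -c\,t^{2-n}\to 0^-$. But turning this into an actual bifurcating curve requires a degenerate Lyapunov--Schmidt reduction that you do not carry out, and it is delicate: the linearization at $u=0$ is $-\Delta-1$ for \emph{every} $\lambda$ (so no eigenvalue crossing in the parameter, as you note), and $f_\lambda$ is only $C^1$ with $f_\lambda''(s)\sim s^{n-2}$ blowing up at $0$, so the standard $C^2$-based reduction and the $w$-estimates need a bespoke argument. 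Likewise, for Theorem \ref{thm:VIIb} the claims $\lambda_*=\lambda^*$, existence of a degenerate solution at $\lambda^*$, Crandall--Rabinowitz fold nondegeneracy, and connectedness of the minimizing and mountain-pass families are all stated as goals without proof. In short: your plan is a reasonable and well-informed program, and the local computations you do supply check out against the paper, but each of the three diagrams still rests on an unestablished global nondegeneracy/exact-multiplicity input, so the statement rightly remains a conjecture.
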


\subsection{Subcase $n=2$}
Under the assumption $n=m+1=2$, any solution to \eqref{eq:D} solves also
\begin{equation}\label{eq:Dmn=2}
\left\{
\begin{aligned}
-\Delta u &= u- (\lambda+1) u^2 &&\text{in}~ \Omega,\\
u &= 0 &&\text{on}~ \partial\Omega.
\end{aligned}
\right.
\end{equation}
The problem \eqref{eq:Dmn=2} is the problem with the logistic nonlinearity if $\lambda > -1$, \eqref{eq:Dmn=2} is the eigenvalue problem if $\lambda=-1$, and \eqref{eq:Dmn=2} is the superlinear problem if $\lambda<-1$. 
Thus, if $\lambda > -1$, then \eqref{eq:D} has a (unique) solution if and only if $\lambda_1 < 1$.
If $\lambda=-1$, then \eqref{eq:D} has infinitely many solutions (of the form $C\varphi_1$, $C>0$) if and only if $\lambda_1=1$.
If $\lambda < -1$ and $N < 6$ (or, equivalently, $3<2^*$), then \eqref{eq:D} has a solution if and only if $\lambda_1 > 1$.
Moreover, any solution to \eqref{eq:D} is positive in $\Omega$ due to Lemma \ref{lem:smp}.

\section{Case \ref{VIII}}\label{sec:VIII}
In this section, we let $m+1 = 2 < n$. 
First, we assume that $\lambda_1<1$.
As in Section \ref{sec:IVa}, the problem \eqref{eq:D} turns out to be a special case of the more general problem studied in \cite{DGU1,DGU2}. 
Applying the results of \cite{DGU2}, we deduce the existence of $\lambda^* \in (-\infty,0)$ such that \eqref{eq:D} possesses a positive solution $u_\lambda$, satisfying $E_\lambda(u_\lambda)<0$, for any $\lambda \in (\lambda^*,0)$, and \eqref{eq:D} has no solution for any $\lambda < \lambda^*$. 
Moreover, if $n<2^*-1$, then \eqref{eq:D} possesses another positive solution $v_\lambda$ for any $\lambda \in [\lambda_*,0)$, and $E_\lambda(v_\lambda)\leq 0$ for $\lambda=\lambda^*$.
See Figure \ref{fig:caseIV}, where the behaviour of the corresponding branches is depicted.

Let us complement these facts by considering the existence of solutions to \eqref{eq:D} for $\lambda \geq 0$ and estimating $\lambda^*$.
\begin{thm}\label{thm:caseVIII}
	Let  $m+1 = 2 < n$ and $\lambda_1<1$. 
	Then \eqref{eq:D} possesses a unique positive solution $u_\lambda$ for any $\lambda \geq 0$.
	Moreover, 
	$$
	\lambda^* > - \frac{(n-2)^{n-2}}{(n-1)^{n-1}(1-\lambda_1)^{n-2}}.
	$$
\end{thm}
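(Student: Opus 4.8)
The statement splits into two independent parts, and the plan is to treat them separately.

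For the existence and uniqueness of a positive solution when $\lambda\ge 0$, no new argument is needed: everything follows from the general results of Section \ref{section:prelim}. Since $m=1$, $\lambda_1<1$ and $n>2>1$, the hypotheses \eqref{eq:them:exist0m} (for $\lambda=0$) and \eqref{eq:them:exist1m} (for $\lambda>0$) of Theorem \ref{thm:existence2} hold, yielding a positive solution $u_\lambda$ for every $\lambda\ge 0$; its positivity is guaranteed by the sub/supersolution construction and, alternatively, by Lemma \ref{lem:smp} (cases \eqref{eq:lem:smp1} and \eqref{eq:lem:smp2}). Uniqueness is immediate from Theorem \ref{thm:uniq}, whose hypothesis \eqref{eq:them:uniq2} ($\lambda\ge 0$, $n\ge 1$) is satisfied here.

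For the lower bound on $\lambda^*$, I would reuse the $\varphi_1$-testing scheme of Lemma \ref{lem:IIIb} and of the proof of Theorem \ref{thm:VIIb}. Let $u=u_\lambda$ be any solution of \eqref{eq:D}; necessarily $\lambda<0$ since $\lambda^*<0$. Recalling that $m=1$, so the nonlinearity is $u-u^2-\lambda u^n$, I multiply the equation by $\varphi_1$, integrate by parts using $-\Delta\varphi_1=\lambda_1\varphi_1$, and obtain
\begin{equation*}
\int_\Omega \left((\lambda_1-1)+u+\lambda u^{n-1}\right) u\varphi_1\,dx=0.
\end{equation*}
Setting $h_\lambda(s)=(\lambda_1-1)+s+\lambda s^{n-1}$ for $s>0$, the integrand equals $h_\lambda(u)\,u\varphi_1$ with $u,\varphi_1>0$. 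Because $\lambda<0$, $\lambda_1<1$ and $n>2$, one has $h_\lambda(0^+)=\lambda_1-1<0$, $h_\lambda(s)\to-\infty$ as $s\to+\infty$, and $h_\lambda''(s)=\lambda(n-1)(n-2)s^{n-3}<0$; hence $h_\lambda$ is strictly concave with a unique global maximum at $s_\lambda=\big(\tfrac{1}{-\lambda(n-1)}\big)^{1/(n-2)}$. Using the relation $\lambda s_\lambda^{n-1}=-s_\lambda/(n-1)$ I compute $h_\lambda(s_\lambda)=(\lambda_1-1)+\tfrac{n-2}{n-1}s_\lambda$, which is strictly increasing in $\lambda\in(-\infty,0)$ since $s_\lambda$ is. Solving $h_{\overline\lambda}(s_{\overline\lambda})=0$ then gives exactly $\overline\lambda=-\frac{(n-2)^{n-2}}{(n-1)^{n-1}(1-\lambda_1)^{n-2}}$, the quantity in the statement.

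I would conclude with a sign argument. For $\lambda<\overline\lambda$ one has $h_\lambda(s)\le h_{\overline\lambda}(s)$ for all $s>0$ (the difference is $(\lambda-\overline\lambda)s^{n-1}<0$), and since $h_{\overline\lambda}\le 0$ with equality only at $s_{\overline\lambda}$, it follows that $h_\lambda<0$ on $(0,+\infty)$; the displayed integral is then strictly negative, which is impossible. This rules out solutions for $\lambda<\overline\lambda$. The step I expect to be the main obstacle is the borderline case $\lambda=\overline\lambda$, which is what upgrades the bound from $\ge$ to the strict inequality $\lambda^*>\overline\lambda$. There $h_{\overline\lambda}(u)\,u\varphi_1\le 0$ with integral zero forces $h_{\overline\lambda}(u)=0$ a.e. on $\{u>0\}$, i.e. $u\equiv s_{\overline\lambda}$ there; but a function in $W_0^{1,2}(\Omega)$ taking only the values $0$ and $s_{\overline\lambda}$ has vanishing gradient a.e. (the gradient vanishes a.e. on each level set), hence is constant, and with zero boundary trace on the connected $\Omega$ must be $\equiv 0$, contradicting $u\not\equiv 0$ (for a classical $u$ one simply notes that a positive constant cannot vanish on $\partial\Omega$). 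Therefore \eqref{eq:D} has no solution for any $\lambda\le\overline\lambda$, so the solution set is contained in $(\overline\lambda,0)$. Since the results recalled above from \cite{DGU2} provide a (weak) solution at $\lambda=\lambda^*$, and that solution also satisfies the identity above via testing with $\varphi_1\in X$, we conclude $\lambda^*>\overline\lambda$, which is the asserted estimate.
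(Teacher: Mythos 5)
Your proposal is correct and follows essentially the same route as the paper's proof: existence and uniqueness for $\lambda\ge 0$ via Theorems \ref{thm:existence2} and \ref{thm:uniq}, and the bound on $\lambda^*$ by testing with $\varphi_1$ and analyzing the strictly concave function $h_\lambda(s)=(\lambda_1-1)+s+\lambda s^{n-1}$, exactly as in the paper's adaptation of Lemma \ref{lem:IIIb}. You are in fact more careful than the paper at the endpoint: the paper only rules out solutions for $\lambda<\overline{\lambda}$ (which literally yields $\lambda^*\ge\overline{\lambda}$), while your exclusion of the borderline case $\lambda=\overline{\lambda}$ together with the existence of a solution at $\lambda=\lambda^*$ from \cite{DGU2} (which, note, is stated there under the extra hypothesis $n<2^*-1$, not assumed in the theorem) is what genuinely upgrades the estimate to the strict inequality.
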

\begin{proof}
	If $\lambda \geq 0$, then the existence and uniqueness of $u_\lambda$ are given by Theorems \ref{thm:existence2} and \ref{thm:uniq}. 
	To obtain the lower bound for $\lambda^*$, we argue in much the same way as in the proof of Lemma \ref{lem:IIIb}. 
	Assume that \eqref{eq:D} possesses a solution $u_\lambda$ for some $\lambda < 0$. 
	Multiplying the equation \eqref{eq:D} by $\varphi_1$ and integrating by parts, we get
	\begin{equation}\label{eq:lem:contr4}
	\int_\Omega \left((\lambda_1-1)  + u_\lambda + \lambda u_\lambda^{n-1} \right) u_\lambda \varphi_1 \,dx = 0.
	\end{equation}
	Let us analyze the function $h_\lambda(s) = (\lambda_1-1)  + s + \lambda s^{n-1}$ for $s>0$.
	Thanks to $n>2$, we see that $s$ is the leading term as $s \to 0$, and $\lambda s^{n-1}$ is the leading term as $s \to +\infty$. 
	Moreover, recalling that $\lambda<0$, we have $h_\lambda''(s)<0$ for all $s>0$.
	Therefore, $h_\lambda(s)$ has exactly one critical point $s_\lambda$ which is the point of global maximum. Arguing straightforwardly, we note that $h_\lambda(s_\lambda)$ increases with respect to $\lambda \in (-\infty,0)$.
	Thus, if $h_{\overline{\lambda}}(s_{\overline{\lambda}}) = 0$ for some $\overline{\lambda}<0$, then $h_\lambda(s_\lambda) < 0$ for all $\lambda < \overline{\lambda}$, and hence for such $\lambda$ we get a contradiction with \eqref{eq:lem:contr4}. 
	Directly investigating $h_\lambda(s)$, we find that
	$$
	\overline{\lambda} = - \frac{(n-2)^{n-2}}{(n-1)^{n-1}(1-\lambda_1)^{n-2}},
	$$
	which finishes the proof.
\end{proof}

\begin{remark}
	Another explicit estimate for $\lambda^*$ is given in \cite[Theorem 2.1]{DGU1}.
\end{remark}

\begin{figure}[ht]
	\centering
	\includegraphics[width=0.6\linewidth]{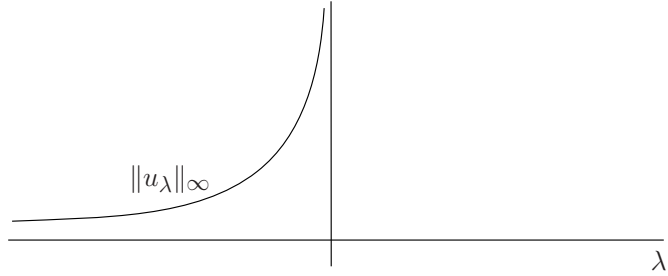}\\
	\caption{The branch of solutions to \eqref{eq:D} in the case $m+1 = 2 < n < 2^*-1$ and $\lambda_1 \geq 1$.}
	\label{fig:caseVIII}
\end{figure}

Let us now study the case $\lambda_1 \geq 1$.
\begin{thm}\label{thm:caseVIII2}
	Let  $m+1 = 2 < n < 2^*-1$ and $\lambda_1 \geq 1$. 
	Then \eqref{eq:D} possesses a positive solution $u_\lambda$ for any $\lambda < 0$, and \eqref{eq:D} has no solution for any $\lambda \geq 0$. 
\end{thm}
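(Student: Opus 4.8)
The plan is to treat the two assertions separately, disposing of nonexistence immediately and concentrating the work on existence. For $\lambda \geq 0$ there is nothing to construct: since $m=1$ and $\lambda_1 \geq 1$, the last assertion of Theorem \ref{thm:existence2} already tells us that any solution of \eqref{eq:D} forces $\lambda < 0$, so \eqref{eq:D} has no solution for $\lambda \geq 0$. It therefore remains to produce, for each fixed $\lambda < 0$, a positive solution. With $m=1$ and $\lambda<0$ the energy reads
$$
E_\lambda(u) = \frac12\int_\Omega|\nabla u|^2\,dx - \frac12\int_\Omega|u^+|^2\,dx + \frac13\int_\Omega|u^+|^3\,dx - \frac{|\lambda|}{n+1}\int_\Omega|u^+|^{n+1}\,dx,
$$
and since $n+1>3$ the negative $L^{n+1}$-term dominates at infinity, so $E_\lambda$ is unbounded below and global minimization is unavailable. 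I would thus obtain $u_\lambda$ as a mountain pass critical point.

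The geometry at infinity is easy: for any fixed nonnegative $v\in W_0^{1,2}(\Omega)\setminus\{0\}$ the leading term of $E_\lambda(tv)$ as $t\to+\infty$ is $-\frac{|\lambda|t^{n+1}}{n+1}\int_\Omega|v|^{n+1}\,dx<0$, hence $E_\lambda(tv)\to-\infty$ and I may pick a far point $e=t_0v$ with large $\|\nabla e\|_2$ and $E_\lambda(e)<0$. The Palais-Smale condition is also in hand: by Lemma \ref{lem:PS} under assumption \eqref{eq:lem:PS2}, $E_\lambda$ satisfies (PS) on $W_0^{1,2}(\Omega)$, since $m=1<n-1$ (because $n>2$), while $m=1<2^*-2$ and $n<2^*-1$ are both guaranteed by $2<n<2^*-1$. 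Granting a barrier near the origin, the mountain pass theorem then yields a critical point $u_\lambda$ with $E_\lambda(u_\lambda)\geq\alpha>0$, so $u_\lambda\not\equiv0$ is a weak solution; it is nonnegative by the sign computation for $E_\lambda'(u)u^-$ in the preliminaries, classical by Lemma \ref{lem:reg}, and strictly positive in $\Omega$ by Lemma \ref{lem:smp} (case \eqref{eq:lem:smp1}).

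The hard part is establishing the barrier near $0$, i.e.\ finding $\rho,\alpha>0$ with $E_\lambda(u)\geq\alpha$ on $\|\nabla u\|_2=\rho$. Writing $Q(u)=\tfrac12(\|\nabla u\|_2^2-\|u^+\|_2^2)$, the variational characterization of $\lambda_1$ together with $\lambda_1\geq1$ gives $Q(u)\geq\frac{\lambda_1-1}{2\lambda_1}\|\nabla u\|_2^2\geq0$. When $\lambda_1>1$ this quadratic part is positive definite, and bounding $\int_\Omega|u^+|^{n+1}\,dx\leq C\|\nabla u\|_2^{n+1}$ via the Sobolev embedding (licit since $n+1<2^*$) yields $E_\lambda(u)\geq c_0\rho^2-C|\lambda|\rho^{n+1}\geq\frac{c_0}{2}\rho^2$ for small $\rho$, exactly in the spirit of the estimate \eqref{eq:caseVI1}. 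The genuine obstacle is the resonant case $\lambda_1=1$, where $Q$ degenerates in the direction of $\varphi_1$ and no longer supplies a barrier.

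To overcome this I would exploit the \emph{positive} cubic term. Arguing by contradiction, suppose there are $u_k$ with $\|\nabla u_k\|_2=\rho_k\to0$ and $E_\lambda(u_k)\leq0$, and set $v_k=u_k/\rho_k$, so $\|\nabla v_k\|_2=1$. Dividing the identity
$$
E_\lambda(u_k)=\rho_k^2\,Q(v_k)+\frac{\rho_k^3}{3}\int_\Omega|v_k^+|^3\,dx-\frac{|\lambda|\rho_k^{n+1}}{n+1}\int_\Omega|v_k^+|^{n+1}\,dx
$$
by $\rho_k^2$ and letting $\rho_k\to0$ forces $Q(v_k)\to0$; hence $\|v_k^+\|_2^2\to1$, and since $\|\nabla v_k\|_2^2=1=\lambda_1\|v_k\|_2^2$ in the limit, $\{v_k\}$ is a minimizing sequence for the Rayleigh quotient converging strongly in $W_0^{1,2}(\Omega)$ to $\varphi_1$ (the negative part vanishes in the limit). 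Consequently $\int_\Omega|v_k^+|^3\,dx\to\int_\Omega\varphi_1^3\,dx>0$, so the cubic term $\frac{\rho_k^3}{3}\int_\Omega|v_k^+|^3\,dx$ is of exact order $\rho_k^3$, while $\frac{|\lambda|\rho_k^{n+1}}{n+1}\int_\Omega|v_k^+|^{n+1}\,dx=o(\rho_k^3)$ because $n+1>3$. Using $Q(v_k)\geq0$, this gives $E_\lambda(u_k)>0$ for all large $k$, contradicting $E_\lambda(u_k)\leq0$ and establishing the barrier. I expect this compactness-plus-cubic-term argument for $\lambda_1=1$ to be the only delicate point; all remaining steps are routine applications of the tools already developed above.
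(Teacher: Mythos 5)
Your overall strategy coincides with the paper's: nonexistence for $\lambda\geq 0$ from the last assertion of Theorem \ref{thm:existence2}, and existence for $\lambda<0$ via the mountain pass theorem, with the Palais--Smale condition supplied by Lemma \ref{lem:PS} (case \eqref{eq:lem:PS2}), descent to $-\infty$ along fibers, and positivity from Lemma \ref{lem:smp}. Where you genuinely diverge is the barrier on a small sphere. The paper proves it by a single elementary estimate valid for all $\lambda_1\geq1$ at once: by the reverse H\"older inequality, $\int_\Omega|u^+|^3\,dx\geq C\left(\int_\Omega|u^+|^2\,dx\right)^{3/2}$, followed by a dichotomy on $\|\nabla u\|_2=\rho$ according to whether $\int_\Omega|u^+|^2\,dx\leq\frac12\rho^2$ or not: in the first case the quadratic part alone gives $E_\lambda(u)\geq\frac{\rho^2}{4}-|\lambda|C\rho^{n+1}$, while in the second the quadratic part is nonnegative (this is where $\lambda_1\geq1$ enters) and the cubic term gives $E_\lambda(u)\geq C\rho^3-|\lambda|C\rho^{n+1}$; since $n+1>3$, both cases yield a uniform $\alpha>0$ for small $\rho$, with no compactness argument and no case split in $\lambda_1$. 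You instead isolate the resonant case $\lambda_1=1$ and resolve it by compactness: Rayleigh-quotient minimizing sequences converge strongly to $\varphi_1$, so the cubic term has exact order $\rho^3$ and dominates $\rho^{n+1}$. Both routes exploit the same structural fact --- the positive cubic term rescues the degenerate direction --- but the paper's version is quantitative and shorter, while yours is softer and equally natural.

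There is, however, one genuine (though easily repaired) flaw in your resonant case: negating the existence of $u_k$ with $\|\nabla u_k\|_2=\rho_k\to0$ and $E_\lambda(u_k)\leq0$ only proves that $E_\lambda>0$ \emph{pointwise} on all sufficiently small spheres, which in infinite dimensions is strictly weaker than the uniform bound $\inf_{\|\nabla u\|_2=\rho}E_\lambda\geq\alpha>0$ required by the mountain pass theorem you invoke: a functional that is positive on a sphere may still have infimum zero there, so ``establishing the barrier'' does not follow from your contradiction as stated. The fix stays entirely inside your own argument: suppose instead that $\inf_{\|\nabla u\|_2=\rho}E_\lambda<\rho^4$ for every small $\rho$, pick $u_k$ on the sphere of radius $\rho_k=1/k$ with $E_\lambda(u_k)<\rho_k^4$, and rerun your computation. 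Dividing by $\rho_k^2$ still forces $Q(v_k)\to0$, your compactness step still gives $\int_\Omega|v_k^+|^3\,dx\to c\int_\Omega\varphi_1^3\,dx>0$, and then $E_\lambda(u_k)\geq c'\rho_k^3-C\rho_k^{n+1}>\rho_k^4$ for large $k$, a contradiction. Hence $\inf_{\|\nabla u\|_2=\rho}E_\lambda\geq\rho^4>0$ for some small $\rho$, and the rest of your proof goes through unchanged.
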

\begin{proof}
	We prove the existence of positive solutions for $\lambda < 0$ by the mountain pass theorem. 
	First, let us show that for any $\lambda<0$ there exist $\rho>0$ and $\alpha>0$ such that $E_\lambda(u) > \alpha$ provided $\|\nabla u\|_2 = \rho$.
	Let us fix any $\rho>0$ and consider an arbitrary $u$ such that $\|\nabla u\|_2 = \rho$.
	Applying H\"older's inequality and the Sobolev embedding theorem, we get
	$$
	E_\lambda(u) \geq \frac{\rho^2}{2} - \frac{1}{2}\int_\Omega |u^+|^2 \, dx + C \left(\int_\Omega |u^+|^2 \, dx\right)^\frac{3}{2} - \lambda C \rho^{n+1}.
	$$
	On the one hand, assuming that $\int_\Omega |u^+|^2 \, dx \leq \frac{1}{2}\rho^2$, we obtain
	$$
	E_\lambda(u) \geq \frac{\rho^2}{4} - \lambda C \rho^{n+1},
	$$
	which implies that $E_\lambda(u) \geq \alpha > 0$ for some $\alpha>0$ and all sufficiently small $\rho>0$. 
	On the other hand, assuming that $\int_\Omega |u^+|^2 \, dx > \frac{1}{2}\rho^2$ and recalling that $\lambda_1 \geq 1$, we apply the Sobolev embedding theorem to deduce that
	$$
	E_\lambda(u) 
	\geq 
	C \rho^3 - \lambda C \rho^{n+1}.
	$$
	Since $n+1 > 3$, we again see that $E_\lambda(u) \geq \alpha > 0$ for some $\alpha>0$ and all sufficiently small $\rho>0$, which establishes the claim.
	Second, taking any $w \in W_0^{1,2}(\Omega) \setminus \{0\}$ such that $w \geq 0$ in $\Omega$ and analyzing the fibers $\phi_w(t)$, we easily see that $E_\lambda(tw) < 0$ for all sufficiently large $t>0$. 
	Noting now that $E_\lambda(0)=0$ and $E_\lambda$ satisfies the Palais-Smale condition on $W_0^{1,2}(\Omega)$ by Lemma \ref{lem:PS}, and applying the mountain pass theorem, we obtain a solution $u_\lambda$ to \eqref{eq:D} for any $\lambda<0$. 
	This solution is positive in $\Omega$ due to Lemma \ref{lem:smp}.
	
	The nonexistence of solutions for $\lambda \geq 0$ can be obtained in much the same way as in Theorem \ref{thm:existence2}. 
\end{proof}

\begin{conjecture}
	In the case $\lambda_1 \geq 1$, we anticipate that the branches of solutions to \eqref{eq:D} have the behaviour as depicted on Figure \ref{fig:caseVIII}.
\end{conjecture}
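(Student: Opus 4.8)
The plan is to treat the two assertions separately: existence for $\lambda<0$ via the mountain pass theorem, and nonexistence for $\lambda\geq0$ via the fiber identity already used at the end of Theorem \ref{thm:existence2}. Throughout I would work with $E_\lambda$ specialized to $m=1$, observing first that the hypotheses $n>2$ and $n<2^*-1$ force $2^*>3$, hence $N<6$; consequently $X=W_0^{1,2}(\Omega)$, all relevant terms are subcritical, $m=1<2^*-2$, and the Palais--Smale condition holds by Lemma \ref{lem:PS} in its branch \eqref{eq:lem:PS2}, since $m=1<n-1$.

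For the existence part I would first establish the mountain pass geometry. Because $\lambda<0$ and $n+1$ is the top exponent, for any fixed nonnegative $w\in W_0^{1,2}(\Omega)\setminus\{0\}$ the leading term of $E_\lambda(tw)$ as $t\to+\infty$ is $\frac{\lambda t^{n+1}}{n+1}\int_\Omega w^{n+1}\,dx<0$, so $E_\lambda(tw)\to-\infty$ and I can select a point beyond any prescribed sphere at which $E_\lambda<0$; together with $E_\lambda(0)=0$ this furnishes the far end of the path. The substantive work is the collar estimate: I must produce $\rho,\alpha>0$ with $E_\lambda(u)\geq\alpha$ whenever $\|\nabla u\|_2=\rho$. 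The key structural fact is that $\lambda_1\geq1$ makes the quadratic part nonnegative, since $\int_\Omega|\nabla u|^2\,dx\geq\lambda_1\int_\Omega u^2\,dx\geq\int_\Omega|u^+|^2\,dx$.

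The main obstacle is precisely that when $\lambda_1=1$ this quadratic part may vanish (for instance along $\varphi_1$), so positivity of the collar cannot rest on it alone and must be drawn from the positive cubic term $\frac{1}{3}\int_\Omega|u^+|^3\,dx$. I would resolve this by a dichotomy on the $L^2$-mass at scale $\rho$. If $\int_\Omega|u^+|^2\,dx\leq\tfrac12\rho^2$, the quadratic part already dominates; discarding the positive cubic term and bounding the negative top term by $|\lambda|C\rho^{n+1}$ through the Sobolev embedding yields $E_\lambda(u)\geq\tfrac14\rho^2-|\lambda|C\rho^{n+1}>0$ for small $\rho$, as $n+1>2$. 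If instead $\int_\Omega|u^+|^2\,dx>\tfrac12\rho^2$, I retain the cubic term and bound it below by H\"older's inequality, $\int_\Omega|u^+|^3\,dx\geq|\Omega|^{-1/2}\left(\int_\Omega|u^+|^2\,dx\right)^{3/2}\geq C\rho^3$, which together with the nonnegative quadratic part gives $E_\lambda(u)\geq C\rho^3-|\lambda|C\rho^{n+1}>0$ for small $\rho$, as $n+1>3$. In either regime the collar is strictly positive, and the mountain pass theorem then delivers a nonzero critical point $u_\lambda$ with $E_\lambda(u_\lambda)\geq\alpha>0$; its positivity follows from Lemma \ref{lem:smp} via \eqref{eq:lem:smp1}.

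For nonexistence when $\lambda\geq0$ I would argue exactly as at the close of Theorem \ref{thm:existence2}. If $u$ solved \eqref{eq:D}, then the stationarity of the fiber, $\left.\frac{d}{dt}\phi_u(t)\right|_{t=1}=0$, would give $\int_\Omega|\nabla u|^2\,dx-\int_\Omega u^2\,dx=-\int_\Omega u^3\,dx-\lambda\int_\Omega u^{n+1}\,dx<0$, using that $u\geq0$, $u\not\equiv0$, and $\lambda\geq0$; but $\lambda_1\geq1$ forces $\int_\Omega|\nabla u|^2\,dx\geq\int_\Omega u^2\,dx$, a contradiction. This settles the nonexistence and completes the proof.
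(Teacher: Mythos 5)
Your argument is correct as far as it goes, but it proves the wrong statement: what you have reproduced is, almost line for line, the paper's own proof of Theorem \ref{thm:caseVIII2} — the same dichotomy on $\int_\Omega |u^+|^2\,dx$ at scale $\rho$ (with the $\lambda_1 \geq 1$ hypothesis neutralizing the quadratic part and the cubic term supplying the collar), the Palais--Smale condition via the branch \eqref{eq:lem:PS2} of Lemma \ref{lem:PS}, positivity via \eqref{eq:lem:smp1} of Lemma \ref{lem:smp}, and the fiber identity from Theorem \ref{thm:existence2} for nonexistence when $\lambda \geq 0$. That theorem, existence of a positive solution for every $\lambda<0$ together with nonexistence for every $\lambda \geq 0$, is already established in the paper immediately before the statement you were asked to address.

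The statement at hand, however, is the Conjecture, and its content is strictly stronger than Theorem \ref{thm:caseVIII2}: it asserts that the solutions form a \emph{branch} behaving as depicted on Figure \ref{fig:caseVIII}. This requires information your argument does not touch: that the solution set is a connected curve parametrized by $\lambda$; how many solutions exist for each fixed $\lambda<0$ (the mountain pass construction gives at least one, not exactly one, and establishes no relation between solutions at nearby parameter values); and the asymptotics of the branch, e.g., the behaviour of $u_\lambda$ as $\lambda \to 0^-$ (consistency with nonexistence at $\lambda=0$ suggests blow-up of the branch there) and as $\lambda \to -\infty$. None of this follows from a bare existence/nonexistence statement, which is precisely why the paper records the branch behaviour as a conjecture rather than as a corollary of Theorem \ref{thm:caseVIII2}. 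So while your write-up is a sound proof of the theorem, it leaves the conjecture exactly as open as before.
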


\smallskip
\bigskip
\noindent
\textbf{Acknowledgments.}
V.\ Bobkov and P.\ Dr\'abek were supported by the grant 18-03253S of the Grant Agency of the Czech Republic. V. Bobkov was also supported by the project LO1506 of the Czech Ministry of Education, Youth and Sports.
The work of J.\ Hern\'andez was partially supported by Grant ref.\ MTM 2017-85449-P
of the Ministerio de Ciencia, Agencia Estatal de Investigaci\'on.

\addcontentsline{toc}{section}{\refname}
\small

\end{document}